\def \deg{\textup{deg}}
\def \tr{\textup{tr}}
\def \dim{\textup{dim}}
\def \rank{\textup{rank}}
\def \ord{\textup{ord}} 
\def \conv{\textup{conv}}
\def \vol{\textup{vol}}
\def \In{\textup{in}}
\def \span{\textup{span}}
\def \End{\textup{End}}
\theoremstyle{plain}
\newtheorem{theorem}{Theorem}[section]
\newtheorem{lemma}[theorem]{Lemma}
\newtheorem{corollary}[theorem]{Corollary}
\newtheorem{proposition}[theorem]{Proposition}
\newtheorem{THM}{Theorem}
\newtheorem{LEM}[THM]{Lemma}
\newtheorem{CORO}[THM]{Corollary}
\newtheorem{PROB}{Problem}
\theoremstyle{definition}
\newtheorem{definition}[theorem]{Definition}
\newtheorem{remark}[theorem]{Remark}
\newtheorem{example}[theorem]{Example}
\newtheorem{algorithm}[theorem]{Algorithm}
\newtheorem{DEF}{Definition}
\newtheorem*{REM}{Remark}
\numberwithin{equation}{section}
\def\T{\mathcal{T}}
\def\V{\mathcal{V}}
\def\gr{\mathrm{gr}}
\def\Gr{\mathrm{Gr}}
\def\GL{\mathrm{GL}}
\def\SL{\mathrm{SL}}
\def \B{\mathcal{B}}
\def \A{\mathcal{A}}
\def \F{\mathcal{F}}
\def\C{\mathbb{C}}
\def\R{\mathbb{R}}
\def\Z{\mathbb{Z}}
\def\N{\mathbb{N}}
\def\Q{\mathbb{Q}}
\def \Gm{\mathbb{G}_\textup{m}}
\def \k{{\bf k}}
\def \x{{\bf x}}
\def \u{u}
\def \v{\mathfrak{v}}
\def \F{\mathcal{F}}
\def \w{M}
\def \MIN{\textup{MIN}}
\def \MAX{\textup{MAX}}
\def \an{\textup{an}}
\def \S{\underline{w}_0}
\def \GR{\textup{GR}}
\def\q{/\!/}
\def\ql{\backslash \! \backslash}
\def\Proj{\mathrm{Proj} \,}
\def \Spec{\mathrm{Spec} \,}
\begin{document}
\title[Khovanskii bases, higher rank valuations and tropical geometry]{Khovanskii bases, higher rank valuations and tropical geometry}
\author{Kiumars Kaveh}
\address{Department of Mathematics, University of Pittsburgh,
Pittsburgh, PA, USA}
\email{kaveh@pitt.edu}

\author{Christopher Manon}
\address{Department of Mathematics, University of Kentucky, Lexington, KY, USA}
\email{Christopher.Manon@uky.edu}

\date{\today}

\thanks{The first author is partially supported by a National Science Foundation Grant 
(Grant ID: DMS-1601303), Simons Foundation Collaboration Grant for Mathematicians, and Simons Fellowship.}

\thanks{The second author is partially supported by a National Science Foundation Grant (Grant ID: DMS-1500966).}

\keywords{Gr\"obner basis, SAGBI basis, Khovanskii basis, subduction algorithm, tropical geometry, valuation, Newton-Okounkov body, toric degeneration} 
\subjclass[2010]{Primary: 13P10, 14T05; Secondary: 14M25, 13A18}

\begin{abstract}
Given a finitely generated algebra $A$, it is a fundamental question whether $A$ has a full rank discrete (Krull) valuation $\v$ with finitely generated value semigroup. We give a necessary and sufficient condition for this, in terms of tropical geometry of $A$. In the course of this we introduce the notion of a Khovanskii basis for $(A, \v)$ which provides a framework for far extending Gr\"obner theory on polynomial algebras to general finitely generated algebras.
In particular, this makes a direct connection between the theory of Newton-Okounkov bodies and tropical geometry, and toric degenerations arising in both contexts. We also construct an associated compactification of $\Spec(A)$. Our approach includes many familiar examples such as the Gel'fand-Zetlin degenerations of coordinate rings of flag varieties as well as wonderful compactifications of reductive groups. We expect that many examples coming from cluster algebras naturally fit into our framework.
\end{abstract}

\maketitle

\setcounter{tocdepth}{1}
\tableofcontents


\section{Introduction}
It is an important question in commutative algebra and algebraic geometry whether a given finitely generated algebra has a full rank valuation with finitely generated value semigroup. The purpose of this paper is to give a necessary and sufficient condition for this in terms of tropical geometry. In the course of this, we introduce the notion of a Khovanskii basis. {In this terminology, the main results of the paper concern necessary and sufficient conditions for existence of a finite Khovanskii basis.}  


The theory of Khovanskii bases, developed in this paper, opens doors to extend powerful and extremely useful methods of Gr\"obner basis theory for ideals in a polynomial ring to general algebras. Whenever an algebra $A$ (equipped with a full rank valuation $\v$) has a Khovanskii basis, one can do many computations in the algebra algorithmically. 
This should enable an extension of Gr\"{o}bner basis theory to ideals in algebras with a finite Khovanskii basis.
In particular, extending the SAGBI basis polyhedral homotopy method, algorithms can be developed to find solutions of systems of equations from the algebra $A$ (in the sense of \cite{KKh-Divisors, KKh-Equations}). This has direct applications to problems from applied algebraic geometry such as finding solutions of non-sparse systems of polynomial equations appearing in robotics or chemical reaction networks. {Some of these applications were discussed in the mini-symposium {\it Newton-Okounkov bodies and Khovanskii bases} in SIAM Conference in Applied Algebraic Geometry (Atlanta, 2017). As far as the authors know, the idea of homotopy methods for computing solutions of systems of equations in the context of Khovanskii bases is due to Bernd Sturmfels (see also \cite{HSS}).} One can think of the theory of Khovanskii bases as the computational and algorithmic side of the general theory of Newton-Okounkov bodies. As Example \ref{ex-Gobel-revisited} indicates the scope of Khovanskii basis theory is far larger than SAGBI theory.

We would like to mention that the theory of Khovanskii bases has been used in the study of Gaussoids \cite{Gaussoid}. Also recently Duff and Sottile use Khovanskii bases in a method to certify solutions to systems of polynomial equations \cite{Duff-Sottile}.   

{Before stating the main results of the paper, let us review some background material.} Let $A$ be a finitely generated $\k$-algebra and domain with Krull dimension $d$ over a field $\k$. We consider a discrete valuation.  $\v: A \setminus \{0\} \to \Q^r$, for some $0 < r \leq d$, which lifts the trivial valuation on $\k$ (here the additive group $\Q^r$ is equipped with a group ordering $\succ$, see Definition \ref{def-valuation}). The image $S(A, \v)$ of $\v$, that is, $$S(A, \v) = \{ \v(f) \mid 0 \neq f \in A \},$$ is usually called the {\it value semigroup of $\v$}. It is a (discrete) additive semigroup in $\Q^r$. The {\it rank} of the valuation $\v$ is the rank of the group generated by its value semigroup.
  The valuation $\v$ gives a filtration $\mathcal{F}_\v = (F_{\v \succeq a})_{a \in \Q^r}$ on $A$, defined by:$$F_{\v \succeq a} = \{ f \in A \mid \v(f) \succeq a\} \cup \{0\}.$$
($F_{\v \succ a}$ is defined similarly.) The corresponding associated graded $\gr_\v(A)$ is:
$$\gr_\v(A) = \bigoplus_{a \in \Q^r} F_{\v \succeq a} / F_{\v \succ a}.$$
It is important to note that $\gr_\v(A)$ is also a domain. 
For $0 \neq f \in A$ we can consider its image $\bar{f}$ in $\gr_\v(A)$, namely the image of $f$ in $F_{\v \succeq a} / F_{\v \succ a}$ where $a = \v(f)$. The following is a central concept in the paper.

\begin{DEF}[Khovanskii basis] \label{def-intro-Khovanskii-basis}
A set $\B \subset A$ is a {\it Khovanskii basis} for $(A, \v)$ if the image of $\B$ in the associated graded $\gr_\v(A)$ forms a set of algebra generators.
\end{DEF}

The case when our algebra $A$ has a finite Khovanskii basis with respect to a valuation $\v$ is particularly desirable. 

\begin{REM}
The main idea behind the definition of a Khovanskii basis is to obtain information about $A$ from its associated graded algebra $\gr_\v(A)$. This algebra can be regarded as a degeneration of $A$ and is often simpler to work with, for example, $\gr_\v(A)$ is graded by the value semigroup $S = S(A, \v)$. The case of main interest is when $\k$ is algebraically closed and $\v$ has full rank equal to $d$. In this case the associated graded $\gr_\v(A)$ is the semigroup algebra $\k[S]$, and hence is a subalgebra generated by monomials in a polynomial algebra. The existence of a finite Khovanskii basis then is equivalent to $S$ being a finitely generated semigroup, in which case $\k[S]$ basically can be described by combinatorial data (see Section \ref{sec-valandKhovanskii} and Proposition \ref{prop-grA-semigp-algebra}). Moreover, we have a degeneration of $\Spec(A)$ to the (not necessarily normal) toric variety $\Spec(\k[S])$.
\end{REM}


The notion of a Khovanskii basis generalizes the notion of a SAGBI basis (also called a canonical basis in \cite{St}) which is used when $A$ is a subalgebra of a polynomial algebra (see \cite{SAGBI}, \cite[Chapter 11]{St} and also Remark \ref{rem-SAGBI}). The name Khovanskii basis was suggested by B. Sturmfels in honor of A. G. Khovanskii's contributions to combinatorial algebraic geometry and convex geometry. {As far as the authors know, the present paper is the first paper which deals with the general notion of a Khovanskii basis}. In this paper, after developing some basic facts about Khovanskii bases, we give a necessary and sufficient condition for existence of  a finite Khovanskii basis for $A$. We find that tropical geometry provides a suitable language for this condition (see Theorems \ref{th-intro-main0} and \ref{th-intro-main1} below). 

There is a simple classical algorithm to represent every element in $A$
as a polynomial in elements of a Khovanskii basis $\B$. This is usually known as the {\it subduction algorithm} (Algorithm \ref{algo-subduction}). In general, given $(A, \v)$ and $f \in A$, it is possible that the subduction algorithm does not terminate in finite time (see Example \ref{ex-subduction-not-terminate}).
We will be interested in the cases where it does terminate. It is easily seen that this happens if the value semigroup $S(A, \v)$ is maximum well-ordered, i.e., every increasing chain in $S(A, \v)$ has a maximum (Proposition \ref{prop-subduction}). This is the case if $\v$ is a homogeneous valuation with respect to a positive grading on $A$, i.e. an algebra grading by $\Z_{\geq 0}$.

Let us say few words about the important case when $A$ is positively graded. In this case, it is convenient to consider a valuation which also encodes information about the grading. More specifically one would like to work with a valuation $\v: A \setminus \{0\} \to \N \times \Q^{r-1} \subset \Q \times \Q^{r-1}$ such that the first component of $\v$ is the degree. That is, for any $0 \neq f \in A$ we have:\footnote{For this to be a valuation one should consider reverse ordering on the first coordinate. Alternatively, one can define $\v(f) = (-\deg(f), \cdot)$.}
\begin{equation}  \label{equ-intro-v-degree}
\v(f) = (\deg(f), \cdot).
\end{equation}
To $(A, \v)$, with $\v$ as in \eqref{equ-intro-v-degree}, one associates a convex body $\Delta(A, \v)$ in $\R^{r-1}$ called a Newton-Okounkov body. This convex body encodes information about the Hilbert function of the algebra $A$ (see Section \ref{subsec-NO-bodies} as well as \cite{Ok, LM, KK}). When $\k$ is algebraically closed and $\v$ has full rank, one shows that $\Delta(A, \v)$ is a convex body whose dimension is the degree of the Hilbert polynomial of $A$ and its volume is the leading coefficient of this Hilbert polynomial (\cite[Theorem 2.31]{KK}).\footnote{In fact, this statement is still true when $A$ is not necessarily finitely 
generated as an algebra, but is contained in a finitely generated graded algebra.} In particular, if $A$ is the homogeneous coordinate ring of a projective variety $Y$, then the degree of $Y$ is given by $\dim(Y)!$ times the volume of the convex body $\Delta(A, \v)$ (\cite[Corollary 3.2]{KK}). We would like to point out that the finite generation of the value semigroup $S = S(A, \v)$ implies that the corresponding Newton-Okounkov body is a rational polytope. Moreover, we have a toric degeneration of $Y = \Proj(A)$ to a (not-necessarily normal) toric variety $\Proj(\k[S])$ (see \cite{Anderson}, \cite[Section 7]{Kaveh-Crystal} and \cite{Teissier}). {The normalization of $\Proj(\k[S])$ is the toric variety associated to the polytope $\Delta(A, \v)$.}

We now explain the main results of the paper in some detail.
Let us go back to the non-graded case and as before 
let $A$ be a finitely generated $\k$-algebra and domain with Krull dimension $d$. Throughout we use the following notation and definitions: Let $\B = \{b_1, \ldots, b_n\}$ be a set of algebra generators for $A$. The set $\B$ determines a surjective homomorphism $\pi: \k[x_1, \ldots, x_n] \to A$ defined by $\pi(x_i) = b_i$, $i=1, \ldots, n$, which we refer to as a presentation of $A$.
Let $I$ be the kernel of the homomorphism $\pi$. 
Recall that the the tropical variety $\T(I)$ is the set of all $\u \in \Q^n$ such that the corresponding initial ideal $\In_\u(I)$ contains no monomials.\footnote{Conceptually it is more appropriate to talk about 
the tropical variety of an ideal in a Laurent polynomial algebra as opposed to a polynomial algebra. So in fact, instead of tropical variety of the ideal $I$ one should consider the tropical variety of the ideal generated by $I$ in the Laurent polynomial algebra $\k[x_1^\pm, \ldots, x_n^\pm]$. The tropical variety then encodes the behavior at infinity of the subscheme defined by this ideal in all possible toric completions of the ambient torus $\Gm^n$.} One knows that the tropical variety $\T(I)$ has a 
fan structure coming from the Gr\"obner fan of the homogenization of $I$ 
(\cite[Chapter 2]{MSt}). In particular, each open cone $C \subset \T(I)$ has an associated initial ideal $\In_C(I)$ (see Section \ref{sec-groebner}).\footnote{By an open cone we mean a cone that coincides with its relative interior.} 

Let $C$ be an open cone in the tropical variety $\T(I)$. We say that $C$ is a {\it prime cone} if the corresponding initial ideal $\In_C(I) \subset \k[x_1, \ldots, x_n]$
is a prime ideal.\footnote{By abuse of terminology, occasionally we may refer to a closed cone as a prime cone, in which case we mean that its relative interior is a prime cone.} The first main result of the paper is the following (Section \ref{sec-val-from-prime-cone}). 
\begin{THM} \label{th-intro-main0}
For each prime cone $C$ in the tropical variety $\T(I)$ one can construct a discrete valuation $\v=\v_C: A \setminus \{0\} \to \Q^d$ such that: (1) $\B$ is a finite Khovanskii basis for $(A, \v)$, (2) the rank of $\v$ is at least the dimension of the prime cone $C$ and, (3) the associated graded $\gr_\v(A)$ is isomorphic to $\k[x_1, \ldots, x_n]/\In_C(I)$.
\end{THM}

We call a valuation $\v: A \setminus \{0\} \to \Q^r$ a {\it subductive valuation}
if it possesses a finite Khovanskii basis and the subduction algorithm, with respect to this Khovanskii basis, always terminates (Definition \ref{def-subductive-val}).
The next theorem is the second main result of the paper. It shows that, when $A$ is positively graded, subductive valuations are exactly valuations that arise from prime cones in the tropical variety.

\begin{THM} \label{th-intro-main1}
Let $A = \bigoplus_{i \geq 0} A_i$ be a finitely generated positively graded $\k$-algebra and domain.
With notation as before, we have the following: Any valuation $\v$ with a finite Khovanskii basis consisting of homogeneous elements is subductive. Furthermore, a finite subset $\B \subset A$ consisting of homogeneous elements is a Khovanskii basis for a subductive valuation $\v$ of rank $r$ if and only if the tropical variety $\T(I)$ contains a prime cone $C$ with $\dim(C) \geq r$. 
\end{THM}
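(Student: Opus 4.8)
The plan is to prove the two directions of the equivalence separately, exploiting the positive grading throughout.

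\textbf{The ``if'' direction.} Suppose $\T(I)$ contains a prime cone $C$ with $\dim(C) \ge r$. Because $A$ is positively graded, the all-ones vector (the grading vector) lies in every cone of the Gr\"obner fan of the homogenization, and one checks that a prime cone of $\T(I)$ arising this way lies in the Gr\"obner region $\GR(I)$; in fact positive gradedness forces every relevant cone to meet $\GR(I)$ since $\GR(I) \supseteq \Q_{\le 0}^n$ and we may translate by the grading vector. Thus Theorem \ref{th-intro-main0} applies and produces a discrete valuation $\v : A \setminus \{0\} \to \Q^d$ for which $\B$ is a finite Khovanskii basis and $\rank(\v) \ge \dim(C) \ge r$. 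It remains to arrange that $\v$ is \emph{subductive}, i.e.\ that the subduction algorithm terminates. Here I would invoke Proposition \ref{prop-subduction}: it suffices to check that the value semigroup $S(A, \v)$ is maximum well-ordered. Since $A$ is positively graded and (by the construction in Section \ref{sec-val-from-prime-cone}) $\v$ can be taken to refine the degree in its first coordinate as in \eqref{equ-intro-v-degree}, each graded piece $A_i$ is finite-dimensional, so the fibers of $\deg$ in $S(A,\v)$ are finite and every increasing chain with bounded degree is finite; the degree coordinate itself is well-ordered in $\N$, giving maximum well-ordering. (One may need to pass to a rank-$r$ ``truncation'' of $\v$ recording only the coordinates cut out by $C$, together with the degree, to get exactly rank $r$ rather than $d$; this truncation of a valuation adapted to a flag is again a valuation, and $\B$ remains a Khovanskii basis for it because the associated graded only gets coarser.)

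\textbf{The ``only if'' direction.} Suppose $\B = \{b_1, \dots, b_n\}$ is a finite Khovanskii basis for a subductive valuation $\v$ of rank $r$. Let $\pi : \k[x_1, \dots, x_n] \to A$ be the presentation determined by $\B$ and $I = \ker \pi$. The key point is that $\gr_\v(A)$ is generated by the images $\bar b_1, \dots, \bar b_n$ (Khovanskii basis property) and is a \emph{domain} (stated in the excerpt). Set $w_i = \v(b_i) \in \Q^r$. I would show that the weight vector $\u = (w_1, \dots, w_r\text{-coordinates assembled appropriately})$ — more precisely, that $\v$ induces, via the presentation, an initial ideal: the map $x_i \mapsto \bar b_i$ has kernel exactly $\In_\u(I)$ for a suitable $\u$ lying in $\T(I)$, so that $\k[x_1,\dots,x_n]/\In_\u(I) \cong \gr_\v(A)$. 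Because a rank-$r$ valuation is realized by composing with a generic enough linear functional / a flag of sublattices, the corresponding weight data span an $r$-dimensional cone $C$ in $\T(I)$ on which the initial ideal is constant and equal (up to radical/saturation) to $\ker(x_i \mapsto \bar b_i)$. Since $\gr_\v(A)$ is a domain, that initial ideal is prime, so $C$ is a prime cone, and $\dim(C) = r$ because the value semigroup $S(A, \v)$ has rank $r$ by hypothesis — its rank is read off from the dimension of the $\Q$-span of $\{\v(b_i)\}$, which in turn equals $\dim C$. (Positive gradedness enters to guarantee $C \subset \GR(I)$ and that the homogenization/Gr\"obner-fan machinery giving $C$ its cone structure applies, and to ensure termination of subduction was available as a hypothesis rather than something to prove here.)

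\textbf{Main obstacle.} I expect the crux to be the careful bookkeeping relating the \emph{rank} of $\v$ (rank of the group generated by $S(A,\v)$) to the \emph{dimension} of the cone $C \subset \T(I)$ — establishing both $\dim C \ge r$ from a rank-$r$ subductive valuation and, conversely, extracting a rank-$r$ subductive valuation from a $\dim C \ge r$ prime cone. In one direction one must produce genuinely $r$ independent ``new'' valuation coordinates from the geometry of the cone (not merely from the fixed weights $w_i$, which a priori only span something of dimension $\le r$); the standard device is to choose a point in the relative interior of $C$ together with a complete flag of rational subspaces of the lineality-complement, yielding a rank-$r$ quasi-valuation that one then checks is an honest valuation precisely because $\In_C(I)$ is prime (cf.\ the construction behind Theorem \ref{th-intro-main0}). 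In the other direction one must verify that the induced weight vector $\u$ actually lies in $\T(I)$ — equivalently that $\In_\u(I)$ is monomial-free — which follows from $\gr_\v(A)$ being a nonzero domain (so its defining ideal contains no monomial). The termination/maximum-well-ordering verification via the grading is comparatively routine, as is the reduction from a rank-$d$ valuation supplied by Theorem \ref{th-intro-main0} to one of rank exactly $r$.
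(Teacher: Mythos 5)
Your proposal is correct and follows essentially the same route as the paper: the ``if'' direction is the weight-valuation-from-a-prime-cone construction (Proposition \ref{prop-val-from-cone}, with termination of subduction supplied by the positive grading), and the ``only if'' direction is the identification of a subductive valuation with a weight valuation $\v_M$ (Lemma \ref{lem-subductiveweight}) followed by the construction of a prime cone $C_\v$ with $\dim(C_\v)\geq\rank(M)=r$ (Proposition \ref{prop-prime-cone-from-subductive-val}), primeness coming from $\gr_\v(A)$ being a domain. The only caveats are cosmetic: the initial ideal equals $I_\B=\ker(x_i\mapsto\bar b_i)$ exactly (not merely up to saturation), and one only gets $\dim(C_\v)\geq r$, not equality, which is all the statement requires.
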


In our setting, it is also natural to introduce a generalization of the notion of a standard monomial basis from Gr\"obner theory. We call a $\k$-vector space basis $\mathbb{B}$ for $A$ an {\it adapted basis} for $(A, \v)$ if the image of $\mathbb{B}$ in the associated graded algebra $\gr_\v(A)$ forms a vector space basis for this algebra. One can perform a vector space analogue of the subduction algorithm with respect to $\mathbb{B}$ 
(Algorithm \ref{algo-vec-space-subduct}). Subductive valuations that have adapted bases are particularly nice. We see that, when $A$ is positively graded, any subductive valuation has an adapted basis.

In representation theory context, the (dual) canonical basis of Kashiwara-Lusztig provides an important example of an adapted basis for the algebra of unipotent invariants on a reductive group. Other variants of adapted bases in representation theory have been studied by Feigin, Fourier, and Littelmann in \cite{FFL}, where they are called essential bases (see Example \ref{ex-adapted-basis-rep-theory}).

Theorem \ref{th-intro-main1} is a consequence of two constructions given in Sections \ref{sec-val-from-prime-cone} and \ref{sec-cone-from-subductive-val} respectively (see Propositions \ref{prop-val-from-cone} and \ref{prop-prime-cone-from-subductive-val}). These are the technical heart of the paper. A central concept in these constructions is that of a weight valuation introduced in Section \ref{sec-weightandsubductive}. Below we briefly explain what a weight valuation is, and summarize the results of Sections \ref{sec-val-from-prime-cone} and \ref{sec-cone-from-subductive-val} in a couple of theorems. 

In the classical Gr\"obner theory, one defines the initial form $\In_\u(f)$ of a polynomial $f \in \k[x_1, \ldots, x_n]$ with respect to a weight vector $\u \in \Q^n$. This in turn gives a valuation $\tilde{\v}_\u: \k[x_1, \ldots, x_n] \setminus \{0\} \to \Q$. We use an extension of this notion and for any integer $r >0$ and an $r \times n$ matrix $\w \in \Q^{r \times n}$, we define a valuation $\tilde{\v}_\w: \k[x_1, \ldots, x_n] \setminus \{0\} \to \Q^r$ (see Section \ref{subsec-weight-quasival}). {We can then consider the pushforward of $\tilde{\v}_\w$ via the map $\pi: \k[x_1, \ldots, x_n] \to A$ to obtain a map $\v_\w: A \to \Q^r \cup \{\infty\}$ (Definition \ref{def-weight-quasival}).\footnote{Throughout the paper we will only be interested in $M$ such that $\v_M(f) \neq \infty$, for all $0 \neq f$.} In general the map $\v_\w$ is only a quasivaluation \footnote{Recall that a quasivaluation $\v$ is defined with the same axioms as a valuation except that $\v(fg) \succeq \v(f) + \v(g)$. Some authors use the term semivaluation instead of quasivaluation.} (see Section \ref{subsec-quasival-filtration}). We call a quasivaluation of the form $\v_\w$ a {\it weight quasivaluation}.} We remark that when the associated initial ideal $\In_\w(I)$ is prime then $\v_\w$ is indeed a valuation (Lemma \ref{lem-gr_v-in_w}). The following key statement in the paper relates the notions of a subductive valuation and a weight valuation (Section \ref{lem-subductiveweight}, see also Theorem \ref{th-Khovanskii-basis-equiv-conditions}): 
\begin{LEM}   \label{lem-intro-subductive-weight-val}
Let $\v: A \setminus \{0\} \to \Q^r$ be a valuation and let $\B = \{b_1, \ldots, b_n\}$ be an algebra generating set.  Then the statements:
\begin{enumerate}
\item[(1)] $\v$ is a subductive valuation with respect to $\B \subset A$,
\item[(2)] $\v$ has an adapted basis $\mathbb{B}$ consisting of monomials in $\B$,
\item[(3)] $\v$ coincides with the weight valuation $\v_M$ for the matrix $M \in \Q^{r\times n}$ with column vectors $\v(b_1), \ldots, \v(b_n)$,
\item[(4)] $\v$ has Khovanskii basis $\B$, 
\end{enumerate} 
\noindent
satisfy $(1) \Rightarrow (2) \Rightarrow (3) \Rightarrow (4)$.
\end{LEM}

\noindent
Notice that by Theorem \ref{th-intro-main1} the statements in Lemma \ref{lem-intro-subductive-weight-val} are all equivalent if $A$ is positively graded and $\B \subset A$ consists of homogeneous elements. 

\begin{REM}
In general, by Theorem \ref{th-Khovanskii-basis-equiv-conditions}, $\B$ is a Khovanskii basis of $\v$ if and only if certain ideals $I_M$ and $I_\v$ coincide. By Lemma \ref{lem-gr_v-in_w}, this implies that $\gr_\v(A) \cong \gr_{\v_M}(A)$, which is a weakening of $(3)$ above.  Furthermore, by Theorem \ref{th-Khovanskii-basis-equiv-conditions} this is equivalent to Algorithm \ref{algo-subduction} terminating on a specific collection of elements of $A$, which is a weakening of (1).  
\end{REM}

The next theorem is about constructing valuations from prime cones (see Section \ref{sec-val-from-prime-cone}).
\begin{THM} \label{th-intro-main2}
Let $C \subset \T(I)$ be a prime cone. Let ${\bf u} = \{\u_1, \ldots, \u_r\} \subset C$ be a collection of rational vectors that span a real vector space of maximal dimension $\dim(C)$. Let $\w \in \Q^{r \times n}$ be the matrix whose row vectors are $\u_1, \ldots, \u_r$. Let $\v_{\w}: A \setminus \{0\} \to \Q^r$ be the weight quasivaluation associated to $M$. Then $\v_M$ is a valuation and the following hold:
\begin{itemize}
\item[(1)] $\gr_{\v_{\w}}(A) \cong \k[x_1, \ldots, x_n]/\In_C(I)$. 
\item[(2)] $M$ coincides with the matrix:
$$
M_{\bf u} = \left[ \begin{array}{ccc}
\v_{\u_1}(b_1)  & \cdots & \v_{\u_1}(b_n)\\ 
\vdots & \vdots & \vdots \\
\v_{\u_r}(b_1) & \cdots & \v_{\u_r}(b_n) \end{array} \right],
$$ where, for every $i$, $\v_{\u_i}$ is the weight quasivaluation on $A$ associated to the weight vector $\u_i$.
\item[(3)] The value semigroup $S(A, \v_{\w})$ is generated by the columns of the matrix $M$.
{\item[(4)] If $C$ lies in the Gr\"obner region $\GR(I)$ the valuation $\v_M$ has an adapted basis which can be taken to be a standard monomial basis for a maximal cone in the Gr\"obner fan of $I$ containing $C$.}
\end{itemize}
\end{THM}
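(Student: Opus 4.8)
The plan is to derive all four items from a single identity, $\In_\w(I)=\In_C(I)$, together with the theory of weight quasivaluations (Section \ref{sec-weightandsubductive}) and elementary Gröbner-fan combinatorics. First I would establish that identity. Since $\u_1,\dots,\u_r\in C$ span the linear span of $C$, no proper face of $C$ can contain all of them (a proper face of a cone has strictly smaller linear span), so a small generic perturbation of the form $\u_1+\varepsilon\u_2+\dots+\varepsilon^{r-1}\u_r$ lies in the relative interior of $C$; by the standard translation between matrix weight orders and such perturbations this yields $\In_\w(I)=\In_C(I)$. As $C$ is a prime cone this ideal is prime, so Lemma \ref{lem-gr_v-in_w} applies: $\v_M$ is a genuine valuation and $\gr_{\v_M}(A)\cong \k[x_1,\dots,x_n]/\In_\w(I)=\k[x_1,\dots,x_n]/\In_C(I)$, which is (1). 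Because $C\subset\T(I)$, the ideal $\In_C(I)$ contains no monomial, so under this isomorphism the classes $\overline{b_i}$ (corresponding to the $x_i$) are nonzero and generate $\gr_{\v_M}(A)$; hence $\B$ is a finite Khovanskii basis for $(A,\v_M)$ in the sense of Definition \ref{def-intro-Khovanskii-basis}.

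For (2) and (3) I would compute directly. The weight quasivaluation $\v_{\u_i}$ is the pushforward of $\tilde{\v}_{\u_i}$, so $\v_{\u_i}(b_j)$ is the $\succ$-largest of the values $\tilde{\v}_{\u_i}(g)$ over preimages $g$ of $b_j$; the preimage $x_j$ gives $\v_{\u_i}(b_j)\succeq M_{ij}$, and if some preimage had strictly larger $\u_i$-value, then subtracting $x_j$ from it would place a nonzero scalar multiple of the monomial $x_j$ in $\In_{\u_i}(I)$, which is impossible since $\u_i\in C\subset\T(I)$. Thus $\v_{\u_i}(b_j)=M_{ij}$, i.e.\ $M=M_{\mathbf u}$; the same computation with the matrix $\w$ shows that $\v_M(b_j)$ is the $j$-th column of $M$. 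For (3), since $\B$ is a Khovanskii basis the value semigroup is generated by $\v_M(b_1),\dots,\v_M(b_n)$ (expand any homogeneous element of $\gr_{\v_M}(A)$ as a polynomial in the $\overline{b_i}$ and use that $\gr_{\v_M}(A)$ is a domain with each $\overline{b_i}\ne 0$), and these are exactly the columns of $M$.

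For (4), using $C\subset\GR(I)$ I would choose a maximal cone $\sigma$ of the Gröbner fan of $I$ with $C$ contained in its closure and $\sigma\subset\GR(I)$, and then a term order $>$ of the lexicographic form $(\w_C;\w';\,\cdot\,)$ with $\w_C\in\mathrm{relint}(C)$ rational and $\w'$ pointing from $\w_C$ into the interior of $\sigma$. Since $\In_{\w'}(\In_C(I))=\In_\sigma(I)$ is already monomial one gets $\In_>(I)=\In_\sigma(I)$, while $>$ refining $\w_C$ gives $\In_>(\In_C(I))=\In_>(I)$; hence the monomials standard for $>$ are exactly the monomials standard for $\In_C(I)$ with respect to $>$. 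Their images $\pi(x^\alpha)$ form the standard monomial basis $\mathbb B_\sigma$ of $A\cong\k[x_1,\dots,x_n]/I$, and under the isomorphism of (1) the class $\overline{\pi(x^\alpha)}$ corresponds to $x^\alpha\bmod\In_C(I)$, so the image of $\mathbb B_\sigma$ in $\gr_{\v_M}(A)\cong\k[x_1,\dots,x_n]/\In_C(I)$ is precisely the standard monomial basis of the latter for $>$; therefore $\mathbb B_\sigma$ is an adapted basis for $(A,\v_M)$. Finally $\v_M$ is subductive: expanding any $f\in A$ uniquely in $\mathbb B_\sigma$ — a basis consisting of monomials in $\B$ — a subduction step subtracts precisely the terms of minimal $\v_M$-value occurring in that expansion and introduces no new ones, so the number of basis monomials appearing strictly decreases and the algorithm terminates (compare Algorithm \ref{algo-vec-space-subduct} and Definition \ref{def-subductive-val}).

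The main obstacle will be item (4). The isomorphism $\gr_{\v_M}(A)\cong\k[x_1,\dots,x_n]/\In_C(I)$ comes essentially for free from Lemma \ref{lem-gr_v-in_w}, but one must relate the prime, non-monomial ideal $\In_C(I)$ to the monomial initial ideal $\In_\sigma(I)$ of a cofacing maximal Gröbner cone in such a way that their standard monomials coincide — this is exactly where the careful choice of $>$ and the hypothesis $C\subset\GR(I)$ are used in an essential way — and one must then check that this standard-monomial bookkeeping genuinely controls the subduction algorithm. Once $\In_\w(I)=\In_C(I)$ is in place, everything else is routine.
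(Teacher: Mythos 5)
Your proposal is correct and follows essentially the same route as the paper: establish $\In_\w(I)=\In_C(I)$ (the paper does this via Lemma \ref{lem-in_w-in_u_i} and Proposition \ref{midpoint}, your perturbation $\u_1+\varepsilon\u_2+\cdots$ is the same mechanism), invoke Lemma \ref{lem-gr_v-in_w} and primeness for (1), the "no monomial lies in $\In_C(I)$" argument for (2) (this is Lemma \ref{lem-contractcalc}/Proposition \ref{prop-set-quasival}(4)), the Khovanskii-basis property for (3) (Lemma \ref{lem-Khov-basis-value-semigp}), and a standard monomial basis of a maximal Gr\"obner cone containing $C$ for (4) (Propositions \ref{primeconeadapted} and \ref{prop-st-monomial-adapted-basis}). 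The only place you should spell out more is the deduction in (4) that $\mathbb{B}_\sigma$ is adapted (not merely that its image is a basis of $\gr_{\v_M}(A)$): one must check that every element of $F_{\v_M\succeq a}$ expands in standard monomials of $M$-degree $\succeq a$, which is exactly the content of Proposition \ref{prop-st-monomial-adapted-basis} and uses $\In_>(\In_\w(I))=\In_>(I)$ via the same contradiction argument you already use for (2).
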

Here the Gr\"obner region $\GR(I)$ is taken to be the set of all $u \in \Q^n$ for which there is a term order $>$ with $\In_{>}(\In_u(I)) = \In_{>}(I)$ (see \ref{def-GR-r}). The Gr\"obner region always contains the negative orthant $\Q_{\leq 0}^n$. We remark that if $C$ is a maximal prime cone (i.e., it has dimension $d=\dim(A)$) then $\In_C(I)$ is a prime binomial ideal. 
We also note that by Theorem \ref{th-intro-main2}(1), the associated graded of the valuation $\v_{\w}$ only depends on the cone $C$. That is, given $C$, it is possible to find many valuations $\v_M$ on $A$ with the same associated graded algebra.  

As a corollary of Theorem \ref{th-intro-main2} we obtain the following. 
\begin{CORO}
When $A$ is positively graded and $\B$ consists of homogeneous elements of degree $1$, we can choose ${\bf u}$ so that the first row of $M$ is $(-1, \ldots, -1)$. One observes that in this case, after dropping a minus sign, the valuation $\v_{\w}$ is as in \eqref{equ-intro-v-degree} and the Newton-Okounkov body $\Delta(A, \v_{\w})$ is the convex hull of the $\v_{\w}(b_i)$. In particular, when the cone $C$ is maximal, the degree of $Y=\Proj(A)$ is equal to $\dim(Y)!$ times the volume of this convex hull.
\end{CORO}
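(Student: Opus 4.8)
The plan is to derive this corollary directly from Theorem~\ref{th-intro-main2} together with the Corollary preceding it (on the form of $M$ when $\B$ consists of degree-one elements), and then invoke the volume interpretation of the Newton-Okounkov body recalled in the introduction. First I would apply the previous Corollary: since $A$ is positively graded and generated in degree $1$, we may arrange the vectors ${\bf u}=\{\u_1,\dots,\u_r\}\subset C$ so that $\u_1$ restricts to $(-1,\dots,-1)$ on the generators, i.e.\ the first row of $M$ is $(-1,\dots,-1)$; then the first coordinate of $\v_{\w}(f)$ is $-\deg(f)$, so after negating the first coordinate $\v_{\w}$ becomes a valuation of the form \eqref{equ-intro-v-degree}. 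This places us in the graded setting where the Newton-Okounkov body $\Delta(A,\v_{\w})\subset\R^{r-1}$ is defined.

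Next I would identify $\Delta(A,\v_{\w})$ with $\conv\{\v_{\w}(b_1),\dots,\v_{\w}(b_n)\}$. By Theorem~\ref{th-intro-main2}(3) the value semigroup $S(A,\v_{\w})$ is generated by the columns of $M$, i.e.\ by the vectors $\v_{\w}(b_i)$, each of which has first coordinate $1$ (in degree $1$, after the sign change). The Newton-Okounkov body is by definition the closed convex hull of $\bigcup_{i\ge 1}\{a/i : a\in S(A,\v_{\w})\cap(\{i\}\times\R^{r-1})\}$; since $S(A,\v_{\w})$ is generated in degree $1$ by the $\v_{\w}(b_i)$, every element of $S(A,\v_{\w})$ in degree $i$ is a sum of $i$ generators, so $a/i$ lies in $\conv\{\v_{\w}(b_i)\}$, and conversely each generator $\v_{\w}(b_i)$ itself appears in degree $1$. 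Hence the convex hull is exactly $\conv\{\v_{\w}(b_1),\dots,\v_{\w}(b_n)\}$ (identifying $\{1\}\times\R^{r-1}$ with $\R^{r-1}$).

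Finally, for the degree statement I would assume $C$ is maximal, so $\dim(C)=d=\dim(A)$ and $r=d$; then $\v_{\w}$ is a full-rank valuation (Theorem~\ref{th-intro-main2} already tells us $\v_M$ is a genuine valuation, as $\In_C(I)$ is prime). By the results recalled in the introduction (\cite[Theorem 2.31, Corollary 3.2]{KK}), when $\k$ is algebraically closed and $\v$ has full rank, $\dim(Y)!$ times the volume of $\Delta(A,\v_{\w})$ equals the degree of $Y=\Proj(A)$. Combining this with the identification of $\Delta(A,\v_{\w})$ as $\conv\{\v_{\w}(b_i)\}$ from the previous step gives the claim.

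The main obstacle I anticipate is the middle step: verifying carefully that the Newton-Okounkov body coincides with $\conv\{\v_{\w}(b_i)\}$ rather than merely containing it. This requires knowing that $S(A,\v_{\w})$ is generated \emph{in degree $1$} by precisely the columns of $M$ and that no extra points arise from higher graded pieces after dividing by $i$; this is exactly what Theorem~\ref{th-intro-main2}(3) supplies, together with the degree-$1$ normalization from the preceding Corollary. Once that is in place, the volume/degree assertion is a direct citation.
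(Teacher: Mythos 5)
Your proposal follows the paper's route: the paper derives this corollary from Proposition \ref{prop-val-from-cone}(3) (the value semigroup $S(A,\v_\w)$ is generated by the columns of $M$, hence the Newton--Okounkov cone is the cone over them and $\Delta(A,\v_\w)$ is its degree-one slice, i.e.\ the convex hull of the $\v_\w(b_i)$), combined with the volume theorem for full-rank valuations (Corollary \ref{cor-NO-bodies-degree-proj-var} / \cite[Corollary 3.2]{KK}). Your middle step and final citation are exactly this argument.

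The one place where your write-up has a gap is the very first step: you appeal to ``the Corollary preceding it'' for the fact that ${\bf u}$ can be chosen so that the first row of $M$ is $(-1,\ldots,-1)$, but that assertion is itself part of the statement being proven, so as written the argument is circular there. The justification the paper gives is short but should be supplied: since $\B$ consists of homogeneous elements of degree $1$, the presenting ideal $I$ is homogeneous for the standard grading, so $(-1,\ldots,-1)$ lies in the lineality space $L(I)$ (Definition \ref{def-lineality-space}) and hence in the closure of every cone of the Gr\"obner fan, in particular of $C$; by the remark following Proposition \ref{prop-v_w-v_u_i} (which rests on Proposition \ref{midpoint}) one may take $\u_1=(-1,\ldots,-1)$ from a face of $\bar{C}$ provided $\u_1+\cdots+\u_r$ lies in $C$, without changing $\In_\w(I)=\In_C(I)$. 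With that inserted, the remainder of your argument --- that every element of $S(A,\v_\w)$ of degree $i$ is a sum of $i$ columns of $M$, so the degree-one slice of $P(A,\v_\w)$ is exactly $\conv\{\v_\w(b_1),\ldots,\v_\w(b_n)\}$, and the degree formula for maximal $C$ --- is correct and matches the paper.
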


Conversely, any weight valuation comes from a prime cone in the tropical variety in the following sense (see Section \ref{sec-cone-from-subductive-val}). 

\begin{THM}\label{th-intro-main3}
Let $\v = \v_M: A\setminus \{0\} \to \Q^r$ be a weight valuation with weighting matrix $M \in \Q^{r \times n}$ corresponding to a presentation $A \cong \k[x_1, \ldots, x_n] / I$ {(recall that we assume $\v_M(f) \neq \infty$ for all $0 \neq f \in A$)}. Then there is a prime cone $C_{\v} \subset \T(I)$ such that: $$\k[x_1, \ldots, x_n]/\In_{C_{\v}}(I) \cong \gr_{\v}(A).$$ 
In particular, in light of Lemma \ref{lem-intro-subductive-weight-val}, there exists a prime cone $C_\v$ for any subductive valuation $\v$ on $A$.  Furthermore, if $A$ is positively graded and $\B$ is a homogeneous generating set, there is a prime cone $C_\v$ associated to any valuation on $A$ with Khovanskii basis $\B$. 
\end{THM}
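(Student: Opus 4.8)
The plan is to recognize $\gr_{\v}(A)$ as the quotient of $\k[x_1, \ldots, x_n]$ by the matrix initial ideal $\In_M(I)$, to verify that this ideal is prime and contains no monomial, to realize it as the initial ideal of a single rational weight vector, and to take $C_{\v}$ to be the Gr\"obner cone of that vector. For the first step, by the identification of $\gr_{\v_M}(A)$ with $\k[x_1, \ldots, x_n]/\In_M(I)$ (Lemma \ref{lem-gr_v-in_w}, applied to $\v = \v_M$ constructed as the pushforward of the matrix weight quasivaluation $\tilde{\v}_M$ along $\pi$) there is an isomorphism $\gr_{\v}(A) \cong \k[x_1, \ldots, x_n]/\In_M(I)$ under which the class of $x_i$ corresponds to $\bar{b}_i$, the image of $b_i = \pi(x_i)$ in $\gr_{\v}(A)$. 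Since $\v$ is a valuation, $\gr_{\v}(A)$ is a domain, so $\In_M(I)$ is prime; and it contains no monomial, since $x_1^{a_1}\cdots x_n^{a_n} \in \In_M(I)$ would force $\bar{b}_1^{a_1}\cdots \bar{b}_n^{a_n} = 0$ in $\gr_{\v}(A)$, contradicting that each $\bar{b}_i$ is a nonzero element (nonzero because $\v(b_i) \neq \infty$) of a domain.

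The crux is to realize $\In_M(I)$ as $\In_{u}(I)$ for a single rational vector $u$. Every group order on $\Q^r$ becomes the standard lexicographic order after a linear change of coordinates, so after replacing $M$ by $TM$ for a suitable $T \in \GL_r(\R)$ — which changes neither leading terms nor the ideal $\In_M(I)$ — I may assume $\succ$ is lexicographic; writing $\rho_1, \ldots, \rho_r$ for the rows of $M$, the matrix initial ideal is then the iterated initial ideal $\In_{\rho_r}(\cdots \In_{\rho_1}(I)\cdots)$. Passing to the homogenization $I^h$, whose Gr\"obner fan is a complete fan and which is also where the fan structure on $\T(I)$ comes from, finiteness of the Gr\"obner fan shows that the perturbation $u(\varepsilon) = \rho_1 + \varepsilon\rho_2 + \cdots + \varepsilon^{r-1}\rho_r$ satisfies $\In_{u(\varepsilon)}(I) = \In_M(I)$ for all sufficiently small $\varepsilon > 0$. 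The locus of weight vectors with this fixed initial ideal is the relative interior of a rational Gr\"obner cone, hence contains a rational vector $u$ with $\In_{u}(I) = \In_M(I)$.

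Finally, since $\In_{u}(I) = \In_M(I)$ has no monomial, $u \in \T(I)$; let $C_{\v} \subset \T(I)$ be the Gr\"obner cone whose relative interior contains $u$. Then $\In_{C_{\v}}(I) = \In_{u}(I) = \In_M(I)$ is prime, so $C_{\v}$ is a prime cone, and $\k[x_1, \ldots, x_n]/\In_{C_{\v}}(I) \cong \gr_{\v}(A)$ by the first step. The last assertion follows from Lemma \ref{lem-intro-subductive-weight-val}, which shows that a subductive valuation equals $\v_M$ for $M$ the matrix whose columns are the values of its Khovanskii basis, so the construction above applies to it. I expect the genuine difficulty to be entirely in the second step — the passage from the matrix weight to a single rational weight vector lying in $\T(I)$ — which requires reducing the group order to lexicographic form, identifying $\In_M(I)$ with an iterated single-vector initial ideal, and handling the homogenization so that the Gr\"obner fan and the perturbation argument are available; everything else is bookkeeping with the Gr\"obner fan structure on $\T(I)$.
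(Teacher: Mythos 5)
Your overall architecture is the same as the paper's (this theorem is proved there as Proposition \ref{prop-prime-cone-from-subductive-val}): identify $\gr_{\v}(A)$ with $\k[\x]/\In_M(I)$ via Lemma \ref{lem-gr_v-in_w}, get primeness from the fact that $\gr_\v(A)$ is a domain, pass to the homogenization, replace the matrix weight by a single rational vector with the same initial ideal, and take the Gr\"obner cone containing that vector. The one methodological difference is in the rank-reduction step: you use the lexicographic perturbation $u(\varepsilon)=\rho_1+\varepsilon\rho_2+\cdots+\varepsilon^{r-1}\rho_r$ together with finiteness of the Gr\"obner fan of $I_h$, whereas the paper's Proposition \ref{prop-in-u-rep-in-w} produces the rational vector by a Farkas-lemma argument (Lemma \ref{lem-weight-approx}) applied to a reduced Gr\"obner basis of $I_h$. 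These are interchangeable; both ultimately rest on controlling the initial forms of a finite Gr\"obner basis, and your version is fine provided you make explicit that the stabilized value $\In_{u(\varepsilon)}(I_h)$ agrees with $\In_{(0,M)}(I_h)$ by comparing initial forms on such a basis.

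There is, however, a gap in your monomial-freeness step. You assert that under the isomorphism of Lemma \ref{lem-gr_v-in_w} the class of $x_i$ corresponds to $\bar b_i$; this holds only when $\v_M(b_i)$ equals the $i$-th column $w_i$ of $M$, i.e.\ when $M$ equals its contraction $\iota(M)$ in the sense of Proposition \ref{prop-set-quasival}. In general the class of $x_i$ corresponds to the image of $b_i$ in $F_{M,\succeq w_i}/F_{M,\succ w_i}$, which is zero precisely when $x_i\in\In_M(I)$ (Lemma \ref{lem-contractcalc}), so your argument is circular at that point. Concretely, for $I=\langle x_1-x_2^2\rangle$ and $M=(0,1)$ (MIN convention) one gets $\In_M(I)=\langle x_1\rangle$: this is prime, $\v_M$ is a valuation with $\v_M(b_1)=2\neq\infty$, yet $\In_M(I)$ contains a monomial and no Gr\"obner cone with this initial ideal lies in $\T(I)$. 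The repair is to first replace $M$ by $\iota(M)$, which changes neither $\v_M$ nor $\gr_\v(A)$ (Proposition \ref{prop-set-quasival}(1)) but guarantees that no variable lies in the initial ideal; primeness then excludes all monomials, and the rest of your argument goes through. (The paper's own proof is terse on exactly this point, asserting that primeness of $C_\v$ forces $C_\v\subset\T(I)$, so you are in good company, but the step does need the contraction.)
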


We remark that Theorem \ref{th-intro-main3} implies that only a finite number of associated graded algebras are possible among the weight valuations with fixed Khovanskii basis $\B$, and these are indexed by certain faces of $\T(I)$.  Similarly, if we restrict attention to positively graded algebras, only a finite number of associated graded algebras are possible among all valuations with Khovanskii basis $\B$.

In Section \ref{sec-compactifications}, we study a compactification $\bar{X}_{\bf u}$ of $X = \Spec(A)$ by boundary components associated to a linearly independent set ${\bf u}=\{u_1, \ldots, u_r\} \subset C$ where $C$ is a prime cone of dimension $r$ in the tropical variety $\T(I)$. We show that under mild conditions, the divisor $D_{\bf u} = \bar{X}_{\bf u} \setminus X$ is of combinatorial normal crossings type (\cite{Sturmfels-Tevelev}).

\begin{THM} \label{th-intro-compactification}
Let $X = \Spec(A)$ and $C$ be a prime cone whose relative interior intersects the negative part $\T(I)^- = \T(I) \cap \Q_{\leq 0}^n$ of the tropical variety of an ideal $I$. Let ${\bf u} = \{u_1, \ldots, u_t\} \subset C$ be a choice of $r$ linearly independent vectors. Let $\w \in \Q^{r \times n}$ be the corresponding weighting matrix, i.e. the rows of $M$ are $u_1, \ldots, u_r$, and let $\v_{\w}$be its associated valuation. Then there is a compactification $X \subset \bar{X}_{\bf u}$ of combinatorial normal crossings type whose boundary is a union of $r$ reduced, irreducible divisors $D_1, \ldots, D_r$. The valuation $\v_{\w}$ can be recovered from this boundary divisor in the sense that
for any member $b \in \mathbb{B} \subset A$ of an adapted basis $\mathbb{B}$, we have $\v_{\w}(b) = (\ord_{D_1}(b), \ldots,
\ord_{D_r}(b))$. 
\end{THM}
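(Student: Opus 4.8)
The plan is to construct the compactification $\bar X_{\bf u}$ directly from the Rees-type algebra associated to the filtration $\mathcal F_{\v_{\w}}$, and then identify its boundary divisors with the rows of the weighting matrix $M$. Concretely, let $S = S(A,\v_{\w}) \subset \Q^r$ be the value semigroup; since $\B = \{b_1,\dots,b_n\}$ is a finite Khovanskii basis for the subductive valuation $\v_{\w}$ (by Lemma \ref{lem-intro-subductive-weight-val} and Theorem \ref{th-intro-main2}, the cone $C$ being prime and meeting $\T(I)^-$), $S$ is finitely generated, generated by the columns $u^{(1)},\dots,u^{(n)}$ of $M$. Because the relative interior of $C$ meets $\Q_{\leq 0}^n$, we may (after a unimodular change on the target $\Q^r$) arrange that all these generators lie in $\Z_{\leq 0}^r$, so $-S$ is a pointed semigroup. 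I would then form the Rees algebra $\tR = \bigoplus_{a \in -S} F_{\v_{\w} \succeq -a}\, t^{a} \subset A[t_1^{\pm},\dots,t_r^{\pm}]$, take $\bar X_{\bf u}$ to be a suitable $\multiProj$ (or relative $\Proj$ over $X$) of $\tR$ with respect to the obvious $\Z^r$-grading, and check that it is a variety containing $X = \Spec(A)$ as the open locus where none of the $t_j$-degree functions vanish.

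The key steps, in order: (1) verify finite generation of $\tR$ over $\k$ — this follows from finiteness of the Khovanskii basis, since $\tR$ is generated by the $b_i t^{u^{(i)}}$ together with finitely many semigroup generators of $-S$, exactly the content that makes $\gr_{\v_{\w}}(A) = \k[x_1,\dots,x_n]/\In_C(I) = \k[S]$ a finitely generated semigroup algebra (Theorem \ref{th-intro-main2}(1),(3)); (2) identify the boundary: the $r$ coordinate hyperplane sections $\{t_j\text{-degree} = 0\}$ cut out $r$ divisors $D_1,\dots,D_r$, and one shows each $D_j$ is reduced and irreducible by observing that the corresponding graded piece of $\tR$ is a domain — this uses that $\gr_{\v_{\w}}(A)$ is a domain (prime cone) together with a degeneration/flatness argument for the one-parameter subfiltrations $\v_{\u_i}$ of Theorem \ref{th-intro-main2}(2); (3) check the combinatorial normal crossings condition of \cite{Sturmfels-Tevelev}, which amounts to verifying that the $D_j$ meet transversally along the torus orbits indexed by faces of the cone dual to $-S$, i.e. that locally $\bar X_{\bf u}$ looks like $X$ times an affine toric chart cut out by the semigroup $-S$; (4) compute $\ord_{D_j}$ on an adapted basis element $b$ — by construction $b\, t^{\v_{\w}(b)}$ is a unit times a generator along the generic point of $D_j$, and reading off the $j$-th coordinate of $\v_{\w}(b)$ as the vanishing order is essentially the definition of the $\Z^r$-grading on $\tR$; here the existence of an adapted basis (Theorem \ref{th-intro-main2}(4), or Corollary \ref{cor-subductive-has-adapted-basis}) is what guarantees the subduction/filtration behaves well enough that $\ord_{D_j}(b)$ depends only on $\v_{\w}(b)$ and not on lower-order terms.

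I expect the main obstacle to be step (2)–(3): showing that the boundary divisors are genuinely reduced and irreducible and that the crossing is combinatorially normal, rather than picking up nilpotents or extra components. The difficulty is that the multi-Rees algebra $\tR$ need not be normal, so its $\Proj$ can be badly singular along the boundary; the saving grace is that the \emph{associated graded} of each coordinate subfiltration is the domain $\k[S]$ localized along a face, so one can control the boundary stratum-by-stratum using the prime-cone hypothesis and the flat degeneration $X \rightsquigarrow \Spec(\k[S])$ coming from Theorem \ref{th-intro-main2}(1). A secondary subtlety is the hypothesis that the relative interior of $C$ meets $\T(I)^- = \T(I)\cap\Q_{\leq 0}^n$: this is precisely what is needed to ensure $-S$ is a pointed semigroup (so its $\Spec$ genuinely compactifies directions of $X$ rather than introducing a torus factor), and I would isolate that point early, noting that the "mild conditions" in the theorem statement are exactly this pointedness plus finiteness of the Khovanskii basis.
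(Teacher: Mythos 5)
Your overall strategy coincides with the paper's: form a multi-Rees algebra from the filtration data, take a $\Proj$-type quotient to get $\bar X_{\bf u}$, identify the boundary with the loci $\{t_j = 0\}$, and read off $\ord_{D_j}$ from the $t_j$-grading. You also correctly locate the crux in the behaviour of the boundary. But three of your steps have genuine gaps. First, the Rees algebra must be built from the \emph{componentwise} filtration determined by the $r$ separate rank-one valuations $\v_{\u_1},\dots,\v_{\u_r}$ evaluated on an adapted basis, i.e. $F_{\bf u}(\r)=\span\{b\in\mathbb{B}\mid \v_{\u_i}(b)\geq r_i \text{ for all } i\}$, and not from the lexicographic filtration $F_{\v_{\w}\succeq a}$ indexed by $S$. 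The lex piece $F_{\v_{\w}\succeq(a_1,a_2)}$ contains every $f$ with $\v_{\u_1}(f)>a_1$ regardless of $\v_{\u_2}(f)$, so in your $\tR$ such an $f$ occurs in $t_2$-degrees unrelated to $\v_{\u_2}(f)$ and the identity $\ord_{D_j}(b)=\v_{\u_j}(b)$ --- which is the content of the theorem --- cannot come out; the two filtrations share only an associated graded. Second, ``relative $\Proj$ over $X$'' yields a projective modification of $X$, not a compactification. The paper instead fixes a lattice point $\delta$ in the relative interior of the Newton--Okounkov cone and takes the GIT quotient $E_{\bf u}\q_{\delta}\Gm^r$, equivalently $\Proj$ of the $\Z_{\geq 0}$-graded subalgebra $\bigoplus_{N}F_{\bf u}(N\delta){\bf t}^{-N\delta}$; the choice of $\delta$ is not cosmetic, as it is exactly what drives the dimension count below.

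Third, the step you explicitly defer is where the proof lives, and your sketch of it points in the wrong direction. Reducedness and irreducibility of each $D_i$ is the \emph{easy} part: $I_i=\langle t_i\rangle\cap T_{{\bf u},\delta}(A)$ is the contraction of a prime ideal of $A[{\bf t}]$, hence prime, with no degeneration or flatness argument required. Combinatorial normal crossings in the sense of Sturmfels--Tevelev asks for no transversality along torus orbits ($\bar X_{\bf u}$ is not toric); it asks only that each $D_{i_1}\cap\cdots\cap D_{i_k}$ have codimension $k$, and it suffices to verify this for the deepest stratum. The paper does so by identifying $D_1\cap\cdots\cap D_r$ with the GIT quotient at $\delta$ of the central fibre $\Spec(\gr_{\v_{\w}}(A))$ of the Rees family, and then applying a dimension lemma: for a $\Gm^r$-action on a $d$-dimensional affine variety whose weight cone is $r$-dimensional and a linearization $\delta$ interior to that cone, the quotient has dimension $d-r$. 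Your proposal offers no substitute for this argument, and your reading of the hypothesis ``$C^{\circ}$ meets $\T(I)^-$'' as pointedness of $-S$ is also off the mark: its role is to place all $\v_{\u_i}(b_j)$ in $\Z_{\leq 0}$, so that the Rees algebra is a $\k[{\bf t}]$-algebra giving a flat family over $\mathbb{A}^r$ and so that a standard monomial basis adapted to $\v_{\w}$ exists.
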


\begin{REM}
{A choice of ordering of the elements of ${\bf u}$ gives an ordering of the irreducible components $D_1, \ldots, D_r$.} This can be used to define a flag of subvarieties in 
$\bar{X}_{\bf u}$: 
$$ D_1 \cap \cdots \cap D_r \subset \cdots \subset D_1 \cap D_2 \subset D_1.$$
The valuation $\v_{\w}$ coincides with the valuation associated to the above flag of subvarieties (see \cite[Example 2.13]{KK} or \cite{LM} for the notion of the valuation associated to a flag of subvarieties).    
\end{REM}

\begin{REM}
The above construction includes some well-known compactifications, and in particular the wonderful compactification of an adjoint group (see Example \ref{ex-wonderful-comp}). More precisely, let $G$ be an adjoint group (over an algebraically closed characteristic $0$ field $\k$) with weight lattice $\Lambda$ and semigroup of dominant weights $\Lambda^+$. One defines a natural valuation on the coordinate ring $\k[G]$ as follows. Consider the isotypic decomposition $\k[G] = \bigoplus_{\lambda \in \Lambda^+}(V_\lambda \otimes V_\lambda^*)$ for the left-right $(G \times G)$-action.
Fix an ordering of fundamental weights. This defines a lexicographic ordering on the weight lattice $\Lambda$ of $G$. For 
$f \in \k[G]$ let us write $f = \sum_\lambda f_\lambda$ as the sum of its isotypic components and define $\v(f) = \min\{ \lambda \mid f_\lambda \neq 0\}$. One verifies that this gives a $(G \times G)$-invariant valuation $\v: \k[G] \setminus \{0\} \to \Lambda$. Also one can see that this valuation is of the form $\v_{\w}$ (as in Theorem \ref{th-intro-main2}) and comes with an associated compactification (as in Theorem \ref{th-intro-compactification}). Moreover, from Brion's description of the total coordinate ring of a wonderful compactification (\cite{Brion-total}) it follows that the compactification associated to $\v$ is the wonderful compactification of $G$. Theorem \ref{th-intro-compactification} then implies that the valuation $\v$ corresponds to a flag of $(G \times G)$-orbit closures in the wonderful compactification. {One can extend this example to other spherical homogeneous spaces.}
\end{REM}

\begin{REM}
In \cite{GHKK}, Gross, Hacking, Keel, and Kontsevich construct general toric degenerations in the context of cluster algebras and define and study related compactification constructions. 
This is also present in the related work of Rietsch and Williams on the Grassmannian variety $\Gr_k(\C^n)$ (\cite{RW}). The paper \cite{BFFHL} considers certain Khovanskii bases for the Pl\"ucker algebras of $\Gr_2(\C^n)$ and $\Gr_3(\C^3)$ in connection with \cite{RW}. We suspect that these constructions agree with variants of the 
ones we define here when the cone $C \subset \T(I)$ is chosen with regard to a Khovanskii basis of cluster monomials.
{More specifically, in light of the results in \cite{BFFHL}, we think that the valuation on Pl\"ucker algebra of $\Gr_2(\C^n)$ associated to a plabic graph (as constructed in \cite{RW}) coincides with the valuation constructed from a prime cone (in the tropical Grassmannian) as in Theorem \ref{th-intro-main2}.}

We would like to mention the recent paper \cite{KU} as well. In this paper the authors also make a connection, but in a quite different direction than ours, between the theory of Newton-Okounkov bodies and tropical geometry. 
\end{REM}

Finally, we say a few words about tropical sections and existence of finite Khovanskii bases. 
Recall that the tropical variety $\T(I)$ can be realized as the image of the Berkovich analytification $\Spec(A)^{\an}$ under a tropicalization map $\phi_{\B}$ (\cite{P}, \cite{MSt}, Section \ref{tropical}). It is of interest in tropical geometry to know when the tropicalization map $\phi_{\mathcal{B}}: \Spec(A)^{\an} \to \T(I)$ has a section $s: U \to X^{\an}$, for $U \subset \T(I)$. In \cite{GRW}, Gubler, Rabinoff, and Werner build off of work of Baker, Payne, and Rabinoff \cite{BPR} on curves, to show that such a section always exists over the locus of points with tropical multiplicity $1$. Our Theorem \ref{th-intro-main3} implies that the cone $C_{\v}$ corresponding to a subductive valuation $\v$ has such a section. In particular, following \cite[Section 10]{GRW}, a point $\u \in C_{\v} \subset \T(I)$ lies under a strictly affinoid domain in the analytification $\Spec(A)^{\an}$ with a unique Shilov boundary point (the weight valuation $\v_\u$).  In this sense, $C_\v$ can be regarded as a cone in $\Spec(A)^{\an}$.  With this in mind, we will explore the relationship between convex sets in the Berkovich analytification $\Spec(A)^{\an}$ and higher rank valuations in future work. 

Theorem \ref{th-intro-main2} states that a prime cone $C \subset \T(I)$ can be used to produce a discrete valuation with a prescribed Khovanskii basis. This leads us to the following problem.

\begin{PROB} \label{simpleconeexists}
Given a projective variety $Y$, find an embedding of this variety into a projective toric variety so that the resulting tropicalization contains a prime cone of maximal dimension. 
\end{PROB}


A positive resolution of Problem \ref{simpleconeexists} implies the existence of a toric degeneration.  Recent work of Ilten and Wr\"obel \cite{Ilten-Wrobel} shows that it is not always possible to find a full dimensional prime cone.  In forthcoming work \cite{KMM} the authors and Takuya Murata show that it is always possible to find a prime cone of dimension one less than the Krull dimension when the domain $A$ is positively graded. In particular this implies that any projective variety caries a flat degeneration to a complexity-one $T$-variety.

\begin{REM}[Polyhedral Newton-Okounkov bodies]
Let us consider the graded case $A = \bigoplus_{i \geq 0} A_i$ and let us take a valuation $\v$ as in \eqref{equ-intro-v-degree} which encodes the degree function on $A$. It is immediate from the definition that if the value semigroup $S(A, \v)$ is finitely generated then the corresponding Newton-Okounkov body $\Delta(A, \v)$ is a rational polytope. But it is easy to see that the other implication does not always hold, i.e. if $\Delta(A, \v)$ is a rational polytope it does not imply that $S(A, \v)$ is finitely generated.  
In fact,  by the work \cite{AKL} (see also \cite{Seppanen}) one knows that for homogeneous coordinate rings of projective varieties (and more generally rings of sections of big line bundles) one can find valuations such that the corresponding Newton-Okounkov bodies are rational simplices.
\end{REM}

Using results of Gubler, Rabinoff, and Werner \cite{GRW}, a resolution of Problem \ref{simpleconeexists} appears possible in the case that $U \subset \Gm^n$ is a very affine variety and $\k$ a field of characteristic $0$. First, one constructs a compactification $\bar{X} \supset U$, and resolves it to a smooth normal crossings compactification using Hironaka's strong resolution of singularities in characteristic $0$ (\cite{Hironaka}, \cite[Theorem 3.3 ]{Kollar}). 
By \cite[Theorems 8.4 and 9.5]{GRW}  (see also  \cite{Cueto}), this compactification produces a tropical skeleton in $\bar{X}^{\an}$, along with an  {embedding of an open subset of $U$} whose tropicalization ``sees'' this skeleton as a set of points with tropical multiplicity $1$.  This tropicalization then contains a prime cone $C$. 
With this in mind, it would be interesting to have a solution of the following problem.

\begin{PROB}
Let $A$ be a finitely generated $\k$-algebra and domain with Krull dimension $d$. Find an effective algorithm for constructing a valuation $\v: A \setminus \{0\} \to \Q^d$ of maximal rank $d$ and with a finite Khovanskii basis. 
\end{PROB}

In Section \ref{sec-examples} we consider some examples of the main results of the paper. These include the Gel'fand-Zetlin bases for homogeneous coordinate rings of the Pl\"ucker embeddings of the Grassmannians and flag varieties. In this regard, we would also like to mention the related work \cite{StXu} on Cox rings of del Pezzo surfaces. 

\bigskip          
\noindent{\bf An Example.}
Let $E \subset \mathbb{P}^2$ be the {elliptic curve} cut out by the homogeneous equation 

\begin{equation}
y^2z - x^3 +7xz^2 -2z^3 =0.\\
\end{equation}
Let us illustrate how to construct a subductive valuation (in particular with a finite Khovanskii basis) for the homogeneous coordinate ring of $E$ using prime cones in its tropical variety as in Section \ref{sec-val-from-prime-cone}. The tropical variety $\T$ of $y^2z - x^3 +7xz^2 -2z^3$ is the union of the three half-planes $\Q(1, 1,1) + \Q_{\geq 0} (1,0 ,0 )$, $\Q(1, 1,1) + \Q_{\geq 0} (0, 1 ,0 )$, and $\Q(1, 1,1) + \Q_{\geq 0} (-2,-3 ,0 )$ with initial forms $y^2z - 2z^3$, $-x^3 +7xz^2 - 2z^3$, and $y^2z - x^3$, respectively (see Figure \ref{ATROP}). 

\begin{figure}[htbp]
\centering
\includegraphics[scale = 0.12]{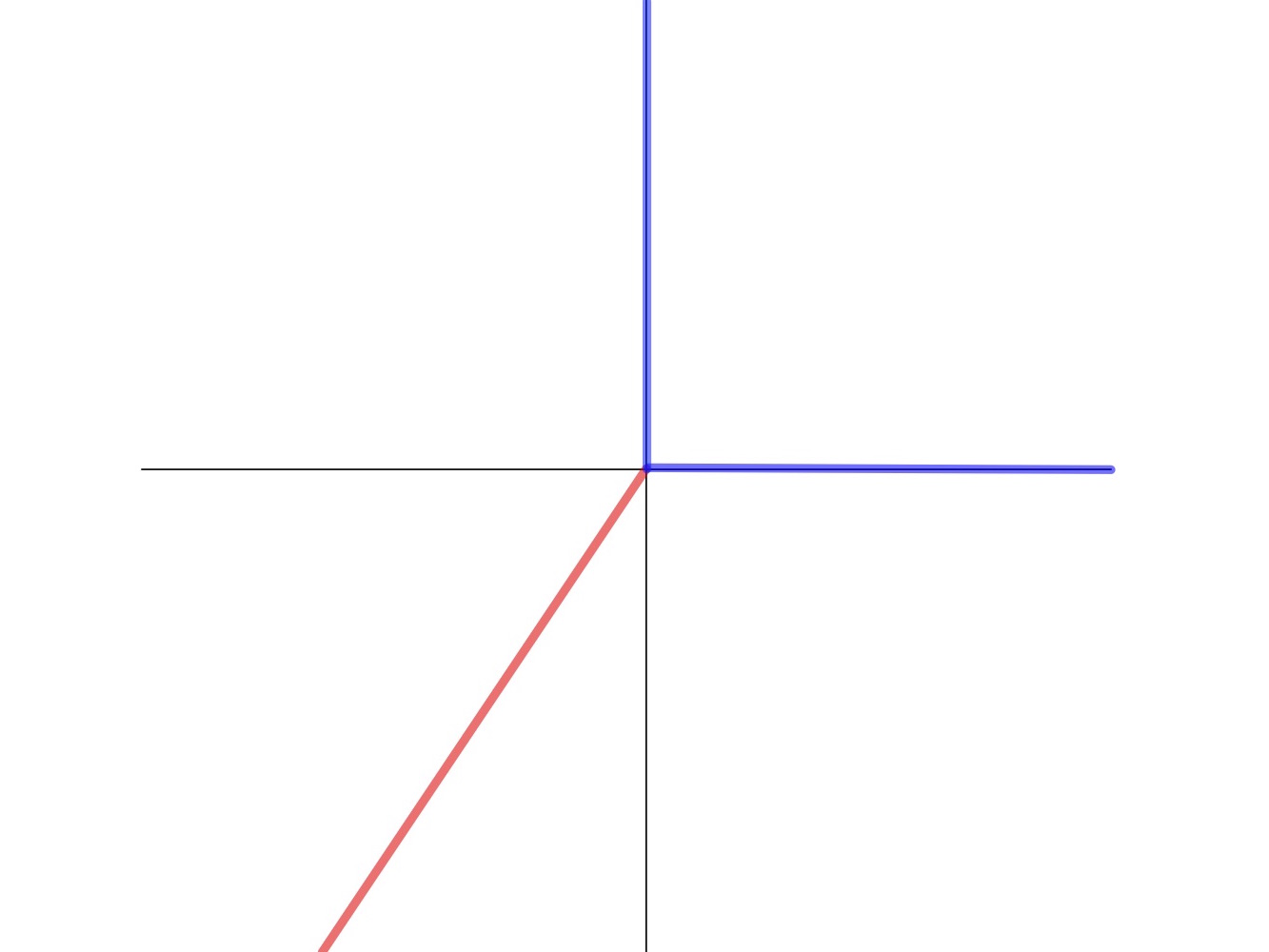}
\caption{The tropical variety $\T/\Q(1, 1, 1)$. The image of the cone $C$ is in the negative orthant.}
\label{ATROP}
\end{figure}

The half-plane $C = \Q(1, 1,1) + \Q_{\geq 0} (-2,-3 ,0 )$ is the only prime cone, and by Theorem \ref{th-intro-main1} it can be used to create  a subductive valuation $\v: \k[E]\setminus \{0\} \to \Z^2$.  Using Section 
\ref{sec-val-from-prime-cone} we can construct this valuation by sending $x, y, z$ to the first, second and third columns of the following weighting matrix $M$ respectively:

\begin{equation}
M = \left[ \begin{array}{ccc}
-1  & -1 & -1 \\
-2 & -3 & 0 \end{array}\right]. \\
\end{equation}

We have obtained $M$ by taking its rows to be the vectors $(-1,-1,-1)$ and $(-2,-3,0) \in C$.  This assignment is then extended linearly to all monomials in $x, y, z$, and the resulting set is lexicographically ordered. As a consequence the value semigroup $S(\k[E], \v)$ is the $\Z_{\geq 0}$-span of the columns of $M$. The Newton-Okounkov cone $P(\k[E], \v)$ is the convex hull of 
$S(\k[E], \v)$ (see Figure \ref{ANOK1}).  
\begin{figure}[htbp]
\centering
\includegraphics[scale = 0.13]{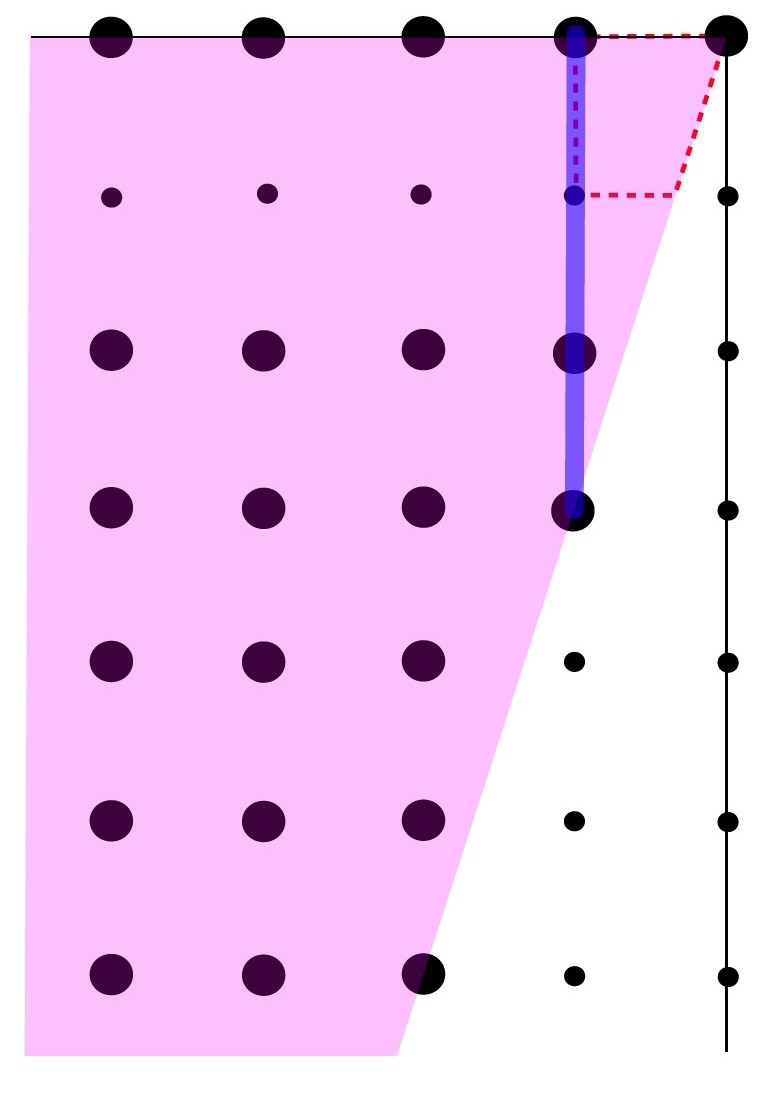}
\caption{The Newton-Okounkov cone of $\k[E]$ with highlighted Newton-Okounkov body. The larger dots indicate members of $S(\k[E], \v)$.}
\label{ANOK1}
\end{figure}
The Newton-Okounkov body $\Delta(\k[E], \v)$ is the convex hull of the columns of $M$, this is an interval of length $3 = \deg(y^2z - x^3 +7xz^2 -2z^3)$ in $\{-1\} \times \R$.

From Section \ref{sec-compactifications} we obtain a compactification $\bar{X}$ of the affine cone $\hat{E} = \Spec(\k[E])$ associated to the choice of matrix $M$.  The projective coordinate ring $\k[\bar{X}]$ given by this construction is presented by $7$ parameters: $T$ and $z$ have homogeneous degree $1$, $x$ and $X$ have homogeneous degree $2$, and finally $y, Y,$ and $Z$ have homogeneous degree $3$.  The ideal which vanishes on these parameters is generated by the following forms:

\begin{equation}
xY - Xy, \ \ \ xZ - XY, \ \ \ TX - xz, \ \ \ TY - yz,  \ \ \ TZ - Yz,\\
\end{equation}

$$X^3 - Z^2 +7Xz^4 -2z^6, \ \ \ xX^2 - yZ +7Xz^3T - 2z^5T$$

$$x^2X - yZ +7Xz^2T^2 -2z^4T^2, \ \ \ x^2X - Z^2 +7Xz^2T^2 - 2z^4T^2, \ \ \ x^3z - yY +7XzT^3 -2z^3T^3.$$
The polytope bordered by the dotted line in Figure \ref{ANOK1} is the Newton-Okounkov polytope of the compactification that appears in Section \ref{sec-compactifications} (see \eqref{equ-Delta-u}). The above relations were obtained by lifting a \emph{Markov basis} of a certain toric ideal. 

The compactifying divisor $D = \bar{X} \setminus \hat{E}$ is the locus of the equation $T = 0$. It has two components $D_1, D_2$ cut out by the ideals $I_1 = \langle T, z \rangle$ and $I_2 = \langle T, x, y, Y \rangle$ respectively.   

\bigskip
\noindent{\bf Acknowledgement.} 
We would like to thank Diane Maclagan, Bernd Sturmfels, Dave Anderson, Alex K\"uronya and Michel Brion for useful comments on a preliminary version of this paper. In fact, this paper was inspired by a question that Bernd asked the authors about a connection between toric degenerations arising in tropical geometry and the theory of Newton-Okounkov bodies. We also thank Angie Cueto, Joseph Rabinoff, and Igor Makhlin for explaining their work to us. In particular, we thank Igor Makhlin for pointing out an error in Example \ref{GZpatternexample} in an earlier draft of this paper.  The authors are also grateful to the Fields Institute for Research in Mathematical Sciences for hosting the thematic program Combinatorial Algebraic Geometry. Some of this work was done during this program. Finally, thanks to Sara Lamboglia and Kalina Mincheva and several other participants of the Apprenticeship Program at the Fields Institute for helpful discussions.

\bigskip
\noindent{\bf Notation.}
Throughout the paper we use the following notation.
\begin{itemize}
\item $\k$, a field which we take to be our base field throughout the paper.
\item $\k[\x]$, the polynomial ring over $\k$ associated to a finite set of indeterminates $\x = (x_1, \ldots, x_n)$.
\item $A$, a finitely generated $\k$-algebra and domain with Krull dimension $d = \dim(A)$. We sometimes assume that $A$ is positively graded i.e. graded by $\Z_{\geq 0}$.
\item $\v: A \setminus \{0\} \to \Q^r$, a discrete valuation on $A$ (see Definition \ref{subsec-valuation}). We denote the corresponding associated graded by $\gr_\v(A)$. The case of main interest is when $\v$ has one-dimensional leaves, which in turn implies that it has full rank $d = \dim(A)$.
\item $S(A, \v)$, the value semigroup of $(A, \v)$ and $P(A, \v)$ the Newton-Okounkov cone of $(A, \v)$, i.e. the closure of the convex hull of $S(A, \v)$. Also when $A$ is positively graded the corresponding Newton-Okounkov body is denoted by $\Delta(A, \v)$ (see Sections \ref{subsec-valuation} and \ref{subsec-NO-bodies}).
\item $\B = \{b_1, \ldots, b_n\}$ a set of $\k$-algebra generators for $A$. 
\item $I \subset \k[\x]$, an ideal usually taken to be the kernel of a surjective homomorphism $\pi: \k[\x] \to A$ given by $x_i \mapsto b_i$. We refer to $\pi$ or $\k[\x] / I \cong A$ as a presentation of $A$.
\item $\w \in \Q^{r \times n}$, a weighting matrix for the parameters in $\x$, inducing an initial term valuation $\tilde{\v}_\w: \k[\x] \setminus \{0\} \to \Q^r$ (see Section \ref{subsec-weight-quasival}).
\item $\v_\w: A \setminus \{0\} \to \Q^r$, the quasivaluation on $A$ obtained by pushforward of $\tilde{\v}_\w$ via the homomorphism $\pi$ (see Definition \ref{def-quasival} 
and Section \ref{subsec-weight-quasival}). 
\item $\textup{GR}(I)$, the Gr\"obner region of an ideal $I$. It is taken to be the set of all $u \in \Q^n$ for which there is a term order $>$ with $\In_{>}(\In_u(I)) = \In_{>}(I)$.
\item $\Sigma(I)$, the Gr\"obner fan of an ideal $I$.
\item $\T(I)$, the tropical variety of an ideal $I$ (see Definition \ref{def-trop-var}).
\end{itemize}

\section{Valuations on algebras and Khovanskii bases}   \label{sec-valandKhovanskii}
In this section we introduce some of the basic terminology and results concerning valuations and we develop a general theory of Khovanskii bases. 

\subsection{Preliminaries on valuations}  \label{subsec-valuation}
Throughout the paper, a (linearly) ordered group is an abelian group $(\Gamma, +)$ equipped with a total ordering $\succ$ which respects the group operation. 
Primarily we work with a discrete subgroup of a rational vector space $\Q^r$ with some linear ordering. By the Hahn embedding theorem (\cite{Gravett}), there is always an embedding of  linearly ordered groups $\eta: \Q^r \to \R^r$, where $\R^r$ is given the standard lexicographic ordering, consequently we may treat any linear ordering by considering the lexicographic case.\footnote{Also it might be worthwhile mentioning the related classical result that any monomial order on $\Z^r$ is obtained from the lexicographic order and a real-valued weighting matrix.}  

Let $(\Gamma, \succ)$ be a linearly ordered group.
\begin{definition}[Valuation] \label{def-valuation}
We recall that a function $\v: A \setminus \{0\} \to \Gamma$ is a {\it valuation} over $\k$ if it satisfies the following axioms:
\begin{itemize}
\item[(1)] For all $0 \neq f, g \in A$ with $0 \neq f+g$ we have $\v(f+g) \succeq \MIN\{ \v(f), \v(g)\}$. Here $\MIN$ is computed using $\succ$.
\item[(2)] For all $0 \neq f, g \in A$ we have $\v(fg) = \v(f) + \v(g)$.
\item[(3)] For all $0 \neq f \in A$ and $0 \neq c \in \k$ we have $\v(cf) = \v(f)$.  
\end{itemize}

Each valuation $\v$ on $A$ naturally gives a $\Gamma$-filtration $\mathcal{F}_\v = (F_{\v \succeq a})_{a \in \Gamma}$ on $A$. Namely, for $a \in \Gamma$ we define:
$$F_{\v \succeq a} = \{ f \in A \setminus \{0\} \mid \v(f) \succeq a\} \cup \{0\}.$$
($F_{\v \succ a}$ is defined similarly.) Clearly $F_{\v \succeq a}$ and $F_{\v \succ a}$ are vector subspaces of $A$. The corresponding associated graded is $\gr_\v(A) = \bigoplus_{a} F_{\v \succeq a} / F_{\v \succ a}$.

If the following extra property is satisfied we say that $\v$ has {\it one-dimensional leaves}:
\begin{itemize}
\item[(4)] For every $a \in \Gamma$ the quotient vector space:
$$F_{\v \succeq a} / F_{\v \succ a},$$
is at most $1$-dimensional.
\end{itemize}
\end{definition}

Let $K$ denote the quotient field of $A$. Let $R_\v = \{ f \in K \setminus \{0\} \mid \v(f) \succeq 0\} \cup \{0\}$, $\mathfrak{m}_\v = \{ f \in K \setminus \{0\} \mid \v(f) \succ 0\} \cup \{0\}$ and $\k_\v = R_\v / \mathfrak{m}_\v$ denote the valuation ring of $\v$, its maximal ideal and its residue field respectively. Clearly $\k_\v$ contains $\k$. It is straightforward to verify that a valuation $\v$ has one-dimensional leaves if and only if the residue field extension is trivial, that is, $\k_\v = \k$.

Below we give some examples of valuations. These contain some cases of interest in computational algebra, algebraic geometry and representation theory, and partly motivated the present work.

\begin{example}  \label{ex-valuation}  
(1) Let $A$ be graded by an ordered group $\Gamma$, i.e. $A = \bigoplus_{g \in \Gamma} A_g$. Using the $\Gamma$-grading we can define a valuation $\v: A \setminus \{0\} \to \Gamma$ as follows.
Take $0 \neq f \in A$ and let $f = \sum_{g \in \Gamma} f_g$ be its decomposition into homogeneous components. We then define:
$$\v(f) = \MIN\{g \mid f_g \neq 0\}.$$
We call the valuation $\v$ constructed in this way a {\it grading function}.  It is easy to see that the associated graded algebra $\gr_{\v}(A)$ is canonically isomorphic to $A$. 

(2) Consider the algebra of polynomials $\k[\x]$ in $n$ indeterminates $\x=(x_1, \ldots, x_n)$. It is graded by the semigroup $\Z_{\geq 0}^n \subset \Z^n$. Fix a group ordering $\succ$ on $\Z^n$. As a particular case of the part (1) above, $\succ$ gives rise to a valuation $\v: \k[\x] \setminus \{0\} \to \Z_{\geq 0}^n$. We call it the {\it lowest term or minimum term valuation}. One verifies that $\v$ is a valuation with one-dimensional leaves.

(3) More generally, let $X$ be a $d$-dimensional variety defined over $\k$.  Take a smooth point $p \in X$ and let $u_1, \ldots, u_d$ be a system of local parameters at $p$. Every rational function $f$ regular at $p$ can be expressed as a power series in the $u_i$. Fixing a group ordering on $\Z^d$, one can define $\v(f)$ as the minimum exponent appearing in the power series of $f$. This extends to define a valuation (with one-dimensional leaves) on the field of rational functions $K$ of $X$. 

(4) Yet more generally, instead of a system of parameters at a smooth point, one can associate a valuation to a flag of subvarieties in $X$ (see \cite[Example 2.13]{KK} and \cite{LM}).     

(5) Let $G$ be a connected reductive algebraic group defined over an algebraically closed characteristic $0$ field $\k$. Let $X$ be an affine variety equipped with a $G$-action. Let $\Lambda$ be the weight lattice of $G$ and $\Lambda^+$ the subsemigroup of dominant weights. One can decompose the coordinate ring $A = \k[X]$ as a direct sum $\bigoplus_{\lambda \in \Lambda^+} A_\lambda$, where $A_\lambda$ is the $\lambda$-isotypic component of $A$, i.e. the sum of irreducible representations in $A$ with highest weight $\lambda$. Fix a group ordering $\succ$ on $\Lambda$. One usually would like to assume that this ordering refines the so-called dominant partial order. Given $f \in A$ let us write $f = \sum_\lambda f_\lambda$ with $f_\lambda \in A_\lambda$. One can then define $\v(f) = \MIN\{ \lambda \mid f_\lambda \neq 0\}$ where the minimum is with respect to $\succ$. This defines a valuation $\v: A \setminus \{0\} \to \Lambda^+$. This valuation in general does not have one-dimensional leaves property.
\end{example}


For simplicity, in this section, 
we consider $\Gamma$ to be the additive group $\Z^r$, for some $0 < r \leq d$, equipped with a linear ordering $\succ$ (e.g. a lexicographic order).

We denote by $S = S(A, \v)$ the {\it value semigroup} of $(A, \v)$, namely:
\begin{equation} \label{equ-value-semigp}
S = \{ \v(f) \mid 0 \neq f \in A \}.
\end{equation}
Clearly $S$ is an (additive) subsemigroup of $\Z^r$.
The {\it (rational) rank of the valuation $\v$} is the rank of the sublattice of 
$\Z^r$ generated by $S(A, \v)$.

The following theorem shows that when $\k$ is algebraically closed, and the valuation $\v$ has full rank $d = \dim(A)$, then it automatically has 
one-dimensional leaves. It is an immediate corollary of Abhyankar's inequality (see \cite[Theorem 6.6.7]{Huneke-Swanson} for statement of Abhyankar's inequality).

\begin{theorem} \label{th-full-rank-1-dim-leaves}
Let $\k$ be algebraically closed and assume that $\v$ has full rank $d = \dim(A)$. Then $\v$ has one-dimensional leaves.
\end{theorem}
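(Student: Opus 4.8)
The plan is to deduce the one-dimensional leaves property directly from Abhyankar's inequality for valuations. Recall that Abhyankar's inequality states that for a valuation $\v$ on the quotient field $K$ of $A$ (trivial on $\k$), one has
\[
\rank(\v) + \tr.\deg_{\k}(\k_\v) \leq \tr.\deg_{\k}(K) = d,
\]
where $\rank(\v)$ here denotes the rational rank of the value group and $\k_\v$ is the residue field. Since we are assuming $\v$ has full rank $d$, the inequality forces $\tr.\deg_{\k}(\k_\v) = 0$, so $\k_\v$ is an algebraic extension of $\k$. Because $\k$ is algebraically closed, this gives $\k_\v = \k$. As observed in the paragraph following Definition \ref{def-valuation}, a valuation has one-dimensional leaves if and only if the residue field extension is trivial, i.e. $\k_\v = \k$; hence $\v$ has one-dimensional leaves.

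First I would spell out the equivalence ``one-dimensional leaves $\iff$ $\k_\v = \k$'' a little more carefully, since the argument hinges on it. If $\k_\v = \k$, then for any $a \in \Gamma$ in the value semigroup, pick $f$ with $\v(f) = a$; for any other $g$ with $\v(g) = a$ we have $\v(g/f) = 0$, so $g/f \in R_\v$ and its residue in $\k_\v = \k$ is some $c \in \k$ (note $c \neq 0$ since $\v(g/f) = 0$ exactly means the residue is a unit), whence $\v(g - cf) \succ a$, i.e. $g \equiv cf$ in the leaf $F_{\v \succeq a}/F_{\v \succ a}$. Thus the leaf is at most one-dimensional. Conversely, if some leaf had dimension $\geq 2$ one produces two $K$-linearly-independent elements of $R_\v$ with independent residues, forcing $\k_\v \supsetneq \k$. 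This is routine but worth including for completeness.

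The one genuinely substantive ingredient is Abhyankar's inequality itself, which I would invoke as a black box with a citation (e.g. to Bourbaki, \emph{Commutative Algebra}, Ch. VI, or to Abhyankar's original work / \cite{KK}), since reproving it would be a digression. A minor subtlety to address: Abhyankar's inequality is usually stated for a valuation extended to the fraction field $K$, whereas our $\v$ is defined on $A \setminus \{0\}$; but $\v$ extends uniquely to $K$ by $\v(f/g) = \v(f) - \v(g)$, the value group is the same, and $\tr.\deg_\k(K) = \dim(A) = d$ since $A$ is a finitely generated domain. I do not anticipate any real obstacle here — the proof is essentially a one-line consequence of a standard theorem once the residue-field characterization of one-dimensional leaves is in place; the ``hard part,'' such as it is, is merely making sure the hypotheses of Abhyankar's inequality are correctly matched (full rank $= d$, $\k$ algebraically closed, $A$ a finitely generated domain of Krull dimension $d$).
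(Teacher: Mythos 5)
Your proof is correct and is exactly the argument the paper intends: the paper states the theorem as "an immediate corollary of Abhyankar's inequality" and gives no further proof, relying on the same residue-field characterization of one-dimensional leaves ($\k_\v = \k$) that it records after Definition \ref{def-valuation}. Your extra care in spelling out that characterization and in extending $\v$ to the fraction field is fine but matches the paper's (implicit) route rather than departing from it.
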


The next proposition states that if $\v$ is assumed to have one-dimensional leaves property (Definition \ref{def-valuation}(4)) then the associated graded algebra $\gr_\v(A)$ can be realized as the semigroup algebra of the value semigroup $S=S(A, \v)$. We omit the proof here (see \cite[Remark 4.13]{Bruns-Gubeladze}).
\begin{proposition} \label{prop-grA-semigp-algebra}
Let $A$ be a domain. If $\v$ has one-dimensional leaves property then $\gr_\v(A)$ is isomorphic to the semigroup algebra $\k[S]$ (note that we do not require $S$ to be finitely generated). More generally, if $R$ is a $\Z^d$-graded algebra such that 
for every $a \in \Z^d$, the corresponding graded piece $R_a$ is at most $1$-dimensional, then $R$ is isomorphic to the semigroup algebra 
$\k[S]$ where $S$ is the subsemigroup of $\Z^d$ defined by $S = \{ a \in \Z^d \mid R_a \neq \{0\} \}$. 
\end{proposition}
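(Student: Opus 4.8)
The plan is to prove the general assertion; the first one is then the special case $R=\gr_\v(A)$, which (as recalled above) is a commutative $\Z^r$-graded domain whose $a$-th graded piece $F_{\v\succeq a}/F_{\v\succ a}$ is at most one-dimensional precisely because $\v$ has one-dimensional leaves, and for which $\{a: F_{\v\succeq a}/F_{\v\succ a}\neq 0\}=S(A,\v)$. So throughout one works with a commutative $\Z^d$-graded domain $R$ with $\dim_\k R_a\le 1$ for all $a$ (these hypotheses are genuinely needed, and hold in the application), and one sets $S=\{a: R_a\neq 0\}$.

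First I would record the easy preliminaries: $S$ is a submonoid of $\Z^d$ — it contains $0$ because $1\in R_0$, and it is closed under addition because $R$ is a domain, so a product of nonzero homogeneous elements is nonzero — and $R_a=\k e_a$ for a chosen nonzero $e_a$ whenever $a\in S$, with $R=\bigoplus_{a\in S}R_a$.

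The substance of the proof is to upgrade the degreewise identifications $R_a\cong\k$ to an algebra isomorphism $\k[S]\cong R$. The obstruction is that $e_ae_b=c_{a,b}e_{a+b}$ for scalars $c_{a,b}\in\k^\times$ (nonzero since $R$ is a domain) that need not equal $1$, and one must rescale the $e_a$ to kill this ``structure cocycle''. The cleanest way is to pass to the localization $\tilde R=T^{-1}R$ at the multiplicative set $T$ of nonzero homogeneous elements. One checks that $\tilde R$ is graded by the subgroup $G=\langle S\rangle=\{s-s':s,s'\in S\}\subseteq\Z^d$, that every nonzero homogeneous element of $\tilde R$ is a unit, and hence that $\tilde R_0=\k$ and that each $\tilde R_a$ for $a\in G$ is one-dimensional, with $\tilde R_a=R_a$ when $a\in S$. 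Since $G$ is a subgroup of $\Z^d$ it is free abelian; pick a basis $g_1,\dots,g_k$ and nonzero $\epsilon_i\in\tilde R_{g_i}$. These are units in the abelian group $\tilde R^\times$ (here, and only here, commutativity of $R$ enters — without it one meets quantum-torus counterexamples), so the universal property of the free abelian group $G$ yields a group homomorphism $\psi:G\to\tilde R^\times$ with $\psi(g_i)=\epsilon_i$; by construction $\psi(a)$ is a nonzero element of the one-dimensional space $\tilde R_a$ and hence spans it. Therefore $\chi^a\mapsto\psi(a)$ defines a $\k$-algebra homomorphism $\k[G]\to\tilde R$ carrying a $\k$-basis to a $\k$-basis, so it is an isomorphism, and its restriction to $\k[S]=\bigoplus_{a\in S}\k\chi^a$ is an isomorphism onto $\bigoplus_{a\in S}\tilde R_a=\bigoplus_{a\in S}R_a=R$.

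I expect the only real obstacle to be the one isolated above: choosing the generators $e_a$ coherently. Passing to $\tilde R$ reduces this to the trivial fact that a homomorphism out of a free abelian group may be prescribed arbitrarily on a basis; the essential inputs are that the grading group $G$ embeds in $\Z^d$ (hence is free) and that $R$ is commutative. One could instead phrase the normalization as the vanishing of a symmetric $2$-cocycle, i.e. of $\mathrm{Ext}^1_{\Z}(G,\k^\times)=0$, but the localization argument sidesteps cohomology entirely.
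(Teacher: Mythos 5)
Your proof is correct. Note that the paper itself omits the argument, deferring to \cite[Remark 4.13]{Bruns-Gubeladze}, so there is no in-paper proof to compare against; your route --- localizing at the nonzero homogeneous elements so that the grading group $G=\langle S\rangle\subseteq\Z^d$ acts through units, then using freeness of $G$ to choose a multiplicative section $\psi:G\to\tilde R^\times$ hitting each one-dimensional piece --- is a clean and standard way to trivialize the structure cocycle $c_{a,b}$. You are also right that the general statement, as literally written, tacitly assumes that products of nonzero homogeneous elements are nonzero (otherwise $S$ need not even be a semigroup) and that $R$ is commutative; both hold in the intended application since $\gr_\v(A)$ is a commutative domain, so making these hypotheses explicit is a feature, not a gap.
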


\subsection{Khovanskii bases and subduction algorithm}  \label{subsec-Khovanskii-basis}
The following definition is one of the key definitions in the paper. It generalizes the notion of a SAGBI basis (also called a canonical basis in \cite{St}) for 
a subalgebra of a polynomial ring (\cite{SAGBI}). Recall that $\v: A \setminus \{0\} \to \Q^r$ is a discrete valuation where $0 < r \leq d = \dim(A)$.
\begin{definition}[Khovanskii basis] \label{def-Khovanskii-basis}
We say that $\mathcal{B} \subset A$ 
is a {\it Khovanskii basis} for $(A, \v)$ if the image of $\B$ in $\gr_\v(A)$ is a set of algebra generators for $\gr_\v(A)$. Note that we do not require $\mathcal{B}$ to be finite although the case of main interest is when it is finite.
\end{definition}
The name Khovanskii basis was suggested by B. Sturmfels in honor of A. G. Khovanskii's influential contributions to combinatorial commutative algebra. 

\begin{remark}  \label{rem-SAGBI}
Let $A$ be a subalgebra of a polynomial algebra $\k[\x]$. A Khovanskii basis for a lowest term valuation, as in Example \ref{ex-valuation}(2), is usually called a {\it SAGBI basis}, which stands for {\it Subalgebra Analogue of Gr\"obner Basis for Ideals} (see \cite{SAGBI}, \cite[Chapter 11]{St}). So the theory of Khovanskii bases far generalizes that of SAGBI bases.
\end{remark}

Below are two examples of algebras with valuations which have finite Khovanskii bases.
\begin{example}   \label{ex-Khov-basis}
(1) Take the standard lexicographic order $\succ$ on $\Z^n$, that is, $e_1 \succ \cdots \succ e_n$ where $\{e_1, \ldots, e_n\}$ is the standard basis. Let $\v$ be the lowest term valuation on the polynomial algebra $\k[\x]$ as defined in Example \ref{ex-valuation}(2). 
Let $A = \k[\x]^{S_n}$ be the subalgebra of symmetric polynomials. It is well-known that this algebra is freely generated by the elementary symmetric polynomials. One verifies that the value semigroup $S(A, \v)$ is $\{(a_1, \ldots, a_n) \in \Z_{\geq 0}^n \mid a_1 \leq \cdots \leq a_n\}$ which is a finitely generated semigroup. In fact, the elementary symmetric polynomials form a finite Khovanskii basis for $(A, \v)$.

(2) Let $G$ be a connected reductive algebraic group over an algebraically closed characteristic $0$ field $\k$ and $X$ an affine variety with an algebraic action of $G$. As in Example \ref{ex-valuation}(5), let $\v$ be the valuation on the coordinate ring $A = \k[X]$ and with values in the weight lattice $\Lambda$ (with respect to a group ordering $\succ$ on $\Lambda$). One shows that if $\succ$ refines the so-called dominant partial order on $\Lambda$ then the associated graded $\gr_\v(A)$ is the so-called horospherical degeneration of $A$. This is known to be a finitely generated algebra and thus $(A, \v)$ has a finite Khovanskii basis. As mentioned before, in general the valuation $\v$ does not have full rank. In \cite[Section 8]{Kaveh-Crystal}, it is shown that when $X$ is a spherical $G$-variety then the valuation $\v$ can be naturally extended to a full rank valuation $\tilde{\v}$ on $A$ such that the semigroup $S(A, \tilde{\v})$ is finitely generated. In other words, $(A, \tilde{\v})$ also has a finite Khovanskii basis. This recovers the toric degeneration results in \cite{Caldero, AB, Kaveh-Spherical}.
\end{example}

\begin{remark}
The idea behind the definition of a Khovanskii basis is to reduce computations in the algebra $A$ to computations in $\gr_\v(A)$. The algebra $\gr_v(A)$ can be regarded as a degeneration of $A$ and in principle has a simpler structure than that of $A$, for example, it is graded by the semigroup $S = S(A, \v) \subset \Q^r$. The case of main interest is when $\v$ has one-dimensional leaves in which case $\gr_\v(A) \cong \k[S]$ is a semigroup algebra (Proposition \ref{prop-grA-semigp-algebra}). Doing computation in the algebra $\k[S]$ is more or less equivalent to doing computation in the semigroup $S$ which we regard as a combinatorial object.
\end{remark}



Here are two examples where the value semigroup $S(A, \v)$ and hence the associated graded $\gr_\v(A)$ are not finitely generated.
\begin{example} \label{ex-Gobel-elliptic}
(1)(G\"obel) Consider the polynomial algebra $\k[x_1, x_2, x_3]$. As in Example \ref{ex-valuation}(2), let $\v$ be the 
lowest term valuation with respect to the lexicographic order $e_3 \succ e_2 \succ e_1$. Let $A = \k[x_1, x_2, x_3]^{A_3}$ be the subalgebra of invariants of the alternating group $A_3$. One shows that the value semigroup $S(A, \v) \subset \Z_{\geq 0}^3$ is not finitely generated and hence $(A, \v)$ does not have a finite Khovanskii basis (see \cite{Gobel} and also \cite[Example 11.2]{St}).

(2) Let $A$ be the homogeneous coordinate ring of an elliptic curve $X$ sitting in $\mathbb{P}^2$ as the zero set of a cubic polynomial (in the Weierstrass form). Let $\v' = \ord_p: A \setminus \{0\} \to \Z$ be the order of vanishing valuation at a general point $p \in X$ and let $\v: A \setminus \{0\} \to \Z_{\geq 0} \times \Z$ be the valuation constructed out of $\v'$ and degree (as in \eqref{equ-intro-v-degree}). One verifies that $S(A, \v) = \{(i, a) \mid i \in \Z_{\geq 0},\, 0 \leq a < 3i \}$ which is not a finitely generated semigroup. On the other hand, if we take $p$ to be the point at infinity then this semigroup can be seen to be finitely generated (see \cite[Example 1.7]{LM} and \cite[Example 6]{Anderson}).
\end{example}

We will use the following notation. For $0 \neq h \in A$ we let $\bar{h}$ denote its image in $\gr_\v(A)$, i.e. the image of $h$ in the quotient $F_{\v \succeq a} / F_{\v \succ a}$ where $a = \v(h)$. Clearly, $\bar{h}$ is a homogeneous element with degree $a$. The next lemma shows that from a Khovanskii basis one can recover the value semigroup $S(A, \v)$ (for a valuation $\v$ with one-dimensional leaves property this also follows from Proposition \ref{prop-grA-semigp-algebra}). 

\begin{lemma}  \label{lem-Khov-basis-value-semigp}
Let $\B$ be a Khovanskii basis for $(A, \v)$. Then the set of values $\{ \v(b) \mid b \in \B\}$ generates $S(A, \v)$ as a semigroup.
\end{lemma}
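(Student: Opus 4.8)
The plan is to exploit the definition of a Khovanskii basis: by hypothesis the images $\{\bar b \mid b \in \B\}$ generate $\gr_\v(A)$ as a $\k$-algebra. First I would fix an arbitrary nonzero $f \in A$ and show its value $\v(f)$ lies in the subsemigroup of $S(A,\v)$ generated by $\{\v(b) \mid b \in \B\}$; since every element of $S(A,\v)$ is of the form $\v(f)$, this suffices (the reverse inclusion is trivial, as each $\v(b)$ is itself in $S(A,\v)$). Write $a = \v(f)$, so $\bar f$ is a nonzero homogeneous element of $\gr_\v(A)$ of degree $a$. Because $\gr_\v(A)$ is generated as an algebra by the $\bar b$, we can express $\bar f$ as a $\k$-linear combination of monomials $\prod_{i} \bar b_i^{\,m_i}$ in these generators. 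Here I would invoke that $\gr_\v(A)$ is a domain and is $\Gamma$-graded, with $\bar b_i$ homogeneous of degree $\v(b_i)$: the monomial $\prod_i \bar b_i^{\,m_i}$ is homogeneous of degree $\sum_i m_i \v(b_i)$, so in the expansion of the homogeneous element $\bar f$ we may discard all terms whose degree is not $a$. At least one monomial must survive (since $\bar f \neq 0$), and for that monomial $a = \sum_i m_i \v(b_i)$, exhibiting $a$ as a nonnegative-integer combination of the $\v(b_i)$, i.e. an element of the semigroup they generate.

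The one point requiring a little care — and the step I expect to be the mildest obstacle — is the passage from "a $\k$-linear combination of monomials equals $\bar f$" to "one monomial has the right degree." This is a standard graded argument: project onto the degree-$a$ graded piece $F_{\v \succeq a}/F_{\v \succ a}$; the projection kills every monomial of degree $\neq a$ and fixes $\bar f$, so $\bar f$ equals the sum of the degree-$a$ monomials in the expansion, and this sum is nonzero. One should note that the grading group $\Gamma = \Z^r$ (or $\Q^r$) is only partially relevant here — what matters is simply that it is a group, so distinct degrees give linearly independent graded pieces, and that the product of homogeneous elements is homogeneous of the summed degree, which holds because $\gr_\v(A)$ is a $\Gamma$-graded ring. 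No appeal to Proposition \ref{prop-grA-semigp-algebra} or to the one-dimensional leaves hypothesis is needed; the argument works for any Khovanskii basis of any $(A,\v)$.

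In summary: (1) reduce to showing $\v(f) \in \langle \v(b) : b \in \B\rangle$ for arbitrary $0 \neq f \in A$; (2) write $\bar f \in \gr_\v(A)$ as an algebra expression in the $\bar b$; (3) use homogeneity of $\bar f$ and of the generators $\bar b$ to isolate a single monomial $\prod_i \bar b_i^{m_i}$ of degree exactly $\v(f)$; (4) conclude $\v(f) = \sum_i m_i \v(b_i)$. The reverse containment and hence the equality of semigroups is then immediate.
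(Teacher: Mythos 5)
Your proof is correct and follows essentially the same route as the paper: both write $\bar f$ as a polynomial in the generators $\bar b_i$, use that $\bar f$ and the $\bar b_i$ are homogeneous in the $S(A,\v)$-graded algebra $\gr_\v(A)$ to discard terms of degree different from $a=\v(f)$, and read off $a=\sum_i m_i\v(b_i)$ from a surviving monomial. Your observation that no one-dimensional-leaves hypothesis is needed matches the paper, which only invokes Proposition \ref{prop-grA-semigp-algebra} as an alternative argument in that special case.
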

\begin{proof}
Recall that $\gr_\v(A)$ is an $S(A, \v)$-graded algebra.
Let $0 \neq f \in A$ with $\v(f) = a$. Since $\B$ is a Khovanskii basis we can write $\bar{f}$ as a polynomial $\sum_{\alpha=(\alpha_1, \ldots, \alpha_n)} c_\alpha \bar{b}_1^{\alpha_1} \cdots \bar{b}_n^{\alpha_n}$, for some $b_1, \ldots, b_n \in \B$. Moreover, since $\bar{f}$ and the $\bar{b}_i$ are homogeneous, we can assume that for every $\alpha$, with $c_\alpha \neq 0$, the corresponding term $c_\alpha \bar{b}_1^{\alpha_1} \cdots \bar{b}_n^{\alpha_n}$ has degree $a$. That is, $a = \sum_i \alpha_i \v(b_i)$. This finishes the proof.
\end{proof}

Whenever we have a Khovanskii basis $\mathcal{B}$, we can represent the elements of the algebra $A$ as polynomials in the elements of $\mathcal{B}$ 
using a simple classical algorithm usually known as the {\it subduction algorithm}. 

\begin{algorithm}[Subduction algorithm] \label{algo-subduction}
{\bf Input:} A Khovanskii basis $\B \subset A$ and an element $0 \neq f \in A$. 
{\bf Output:} A polynomial expression for $f$ in terms of a finite number of elements of $\mathcal{B}$.
\begin{itemize}
\item[(1)] Since the image of $\B$ in $\gr_\v(A)$ generates this algebra, we can find $b_1, \ldots, b_n \in \B$ and a polynomial $p(x_1, \ldots, x_n)$ such that $\bar{f} = p(\bar{b}_1, \ldots, \bar{b}_n)$. Thus we either have $f = p(b_1, \ldots, b_n)$ or
$\v(f - p(b_1, \ldots, b_n)) > \v(f)$. 
\item[(2)] If $f = p(b_1, \ldots, b_n) $ we are done. Otherwise replace $f$ with $f - p(b_1, \ldots, b_n)$ and go to the step (1).
\end{itemize}
\end{algorithm}

\begin{example}  \label{ex-subduction-not-terminate}
In general, it is possible that the subduction algorithm does not terminate. For example, take $A = \k[x]$ to be the polynomial algebra in one variable $x$ and let $\v$ be the order of divisibility by $x$. As a Khovanskii basis take $\B = \{x+x^2\}$. Then the subduction algorithm never stops for $f = x$.
\end{example}

We have the following easy but useful proposition.
\begin{proposition} \label{prop-subduction}
Suppose the value semigroup $S=S(A, \v)$ is maximum well-ordered, i.e. every subset of $S$ has a maximum element with respect to the total order $\succ$. 
Then for any $0 \neq f \in A$  the subduction algorithm (Algorithm \ref{algo-subduction})
terminates after a finite number of steps.
\end{proposition}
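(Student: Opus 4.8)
The plan is to show that each iteration of the subduction algorithm strictly increases the value $\v(f)$, and then to use the maximum well-ordered hypothesis to conclude that only finitely many iterations are possible. First I would observe that the algorithm is well-defined: because $\B$ is a Khovanskii basis, the image of $\B$ in $\gr_\v(A)$ generates that algebra, so at each step we can indeed find $b_1, \ldots, b_n \in \B$ and a polynomial $p$ with $\bar f = p(\bar b_1, \ldots, \bar b_n)$ in $\gr_\v(A)$. Here I would be slightly careful: since $\bar f$ is homogeneous of degree $a = \v(f)$ and the $\bar b_i$ are homogeneous, after discarding terms we may assume every monomial of $p$ contributes in degree $a$, so that $\overline{p(b_1, \ldots, b_n)}$ is either $\bar f$ (and equals $p(\bar b_1, \ldots, \bar b_n)$) or $0$; in the latter situation $p(b_1,\ldots,b_n) = 0$ and the process would stop trivially, so the interesting case is $\overline{p(b_1,\ldots,b_n)} = \bar f$.

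The key step is the strict-increase claim: if $f \neq p(b_1, \ldots, b_n)$ then $\v\big(f - p(b_1, \ldots, b_n)\big) \succ \v(f)$. This follows because $\overline{p(b_1, \ldots, b_n)} = \bar f$ means $f$ and $p(b_1,\ldots,b_n)$ have the same value $a$ and the same image in $F_{\v \succeq a}/F_{\v \succ a}$; hence their difference lies in $F_{\v \succ a}$, i.e. has value strictly greater than $a$ (or is $0$, in which case the algorithm terminates at that step). So the sequence of elements produced by the algorithm, call them $f = f^{(0)}, f^{(1)}, f^{(2)}, \ldots$ with $f^{(k+1)} = f^{(k)} - p^{(k)}(b_1, \ldots, b_n)$, satisfies $\v(f^{(0)}) \prec \v(f^{(1)}) \prec \v(f^{(2)}) \prec \cdots$ as long as it has not terminated.

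Finally I would invoke the hypothesis. The values $\v(f^{(k)})$ all lie in $S = S(A,\v)$ and form a strictly increasing sequence. Consider the subset $\{\v(f^{(k)}) : k \geq 0\} \subseteq S$. By the maximum well-ordered assumption this subset has a maximum element with respect to $\succ$; since the sequence is strictly increasing, its maximum must be its last term, so the sequence is finite, meaning the algorithm terminates. I do not expect any serious obstacle here — the only point requiring a little care is the bookkeeping in the first paragraph (homogeneity of $p$ and the dichotomy "either $f = p(b_1,\ldots,b_n)$ or $\v$ increases"), which is already stated inside Algorithm \ref{algo-subduction}, so strictly speaking the proof reduces to the one-line observation that a strictly $\succ$-increasing sequence in a maximum well-ordered set is finite.
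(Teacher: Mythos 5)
Your proof is correct and is exactly the argument the paper has in mind (the paper omits the proof, labelling the proposition ``easy but useful''): each subduction step either terminates or strictly increases $\v(f)$ within $S(A,\v)$, and a strictly $\succ$-increasing sequence in a maximum well-ordered set must be finite. The homogeneity bookkeeping in your first paragraph is the right level of care and introduces no gap.
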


A large class of examples where the maximum well-ordered assumption is satisfied are homogeneous coordinate rings of projective varieties. 
Below are some general situations where one can guarantee termination of subduction algorithm in finite time. 
\begin{example} \label{ex-graded-alg-max-well-ordered}
(1) Let $A$ be positively graded. 
Let $\v: A \setminus\{0\} \to \Z^r$ be a valuation on $A$ which refines the degree. That is, for any $0 \neq f_1, f_2 \in A$, $\deg(f_1) < \deg(f_2)$ implies that $\v(f_1) \succ \v(f_2)$ (note the switch). One shows that under these assumptions the value semigroup $S(A, \v)$ is maximum well-ordered. 

(2) Let $A = \bigoplus_{g \in \Gamma} A_g$ be an algebra graded by an abelian group $\Gamma$ and such that for every $g \in \Gamma$, $\dim_\k(A_g) < \infty$. Let $\v$ be a valuation on $A$ and $\B$ a Khovanskii basis for $(A, \v)$ consisting of $\Gamma$-homogeneous elements. Then the subduction algorithm terminates for any $0 \neq f \in A$.
\end{example}

For the rest of this subsection we assume that $S(A, \v)$ is maximum well-ordered and hence the subduction algorithm for $(A, \v, \B)$ always terminates.

It is a desirable situation to have a finite Khovanskii basis. Below we explain how to find a Khovanskii basis provided that we know such a basis exists
(Algorithm \ref{algo-find-Khovanskii-basis}). 
Before we present the algorithm, we need some preparation. The next lemma and theorem give a necessary and sufficient condition for a set of algebra generators to be a Khovanskii basis. These are extensions of similar statements from \cite[Chapter 11]{St} to the setup of Khovanskii bases.

Let $\B = \{b_1, \ldots, b_n\} \subset A$ be a subset that generates $A$ as an algebra. Let $a_i = \v(b_i)$, $i=1, \ldots, n$ and put $\A = \{a_1, \ldots, a_n\}$.
Let $\k[\x]$ denote the polynomial algebra in indeterminates $\x = (x_1, \ldots, x_n)$. Consider the surjective homomorphism $\k[\x] \to A$ given by $x_i \mapsto b_i$ and let $I$ be the kernel of this homomorphism. Also we consider the homomorphism $\k[\x] \to \gr_\v(A)$ given by $x_i \mapsto \bar{b}_i$, $i=1, \ldots, n$, where as before $\bar{b}_i$ denotes the image of $b_i$ in $\gr_\v(A)$. 
We denote the kernel of the homomorphism $\k[\x] \to \gr_\v(A)$ by $I_\v$.

\begin{remark} \label{rem-I_A-toric-ideal}
If we assume that the valuation $\v$ has one-dimensional leaves, then by Proposition \ref{prop-grA-semigp-algebra}, the image of the homomorphism $\k[\x] \to \gr_\v(A)$ is isomorphic to the semigroup algebra $\k[S']$ where $S'$ is the semigroup generated by the values $\v(b_i)$, $i=1, \ldots, n$. Thus, we see that the ideal $I_\v$ is a toric ideal and hence generated by binomials. 
When $\B$ is a Khovanskii basis, the semigroup $S'$ coincides with the whole value semigroup $S=S(A, \v)$ and $\k[\x] / I_\v \cong \k[S]$.
\end{remark}

Let $M$ be the $r \times n$ matrix whose columns are the vectors $\v(b_1), \ldots, \v(b_n)$.
Using $M$ we define a partial order on the group $\Q^n$ as follows.
Given $\alpha, \beta \in \Q^n$ we say $\alpha \succ_\w \beta$ if $M\alpha \succ M\beta$, where $\succ$ in 
the right-hand side is the total order on $\Q^r$ used in the definition of the valuation $\v$. We note that since in general $M$ is not a square matrix and hence not invertible, 
it can happen that $\alpha \neq \beta$ but $M\alpha = M\beta$. In this case, $\alpha$, $\beta$ are incomparable in the partial order $\succ_\w$. We can define the notion of initial form of 
a polynomial with respect to $\succ_\w$. Let $p(\x) = \sum_\alpha c_\alpha \x^\alpha \in \k[\x]$ be a polynomial. Let $m = m(p) = \MIN\{M \alpha \mid c_\alpha \neq 0 \}$ where the minimum is with 
respect to the total order $\succ$. We define the initial form $\In_\w(p) \in \k[\x]$ by
\begin{equation} \label{equ-in_w}
\In_\w(p)(\x) = \sum_{\beta} c_\beta \x^\beta,
\end{equation}
where the sum is over all the $\beta$ with $M\beta = m$. If $p = \In_M(p)$ we say that $p$ is {\it $M$-homogeneous}. We let $\In_\w(I)$ be the ideal of $\k[\x]$ generated by $\In_\w(p)$, $\forall p \in I$.  The initial form and the initial ideal are important constructions in Section \ref{sec-groebner}. One makes the following observation: 
\begin{lemma} \label{lem-in_w-I_B}
The ideal $\In_\w(I)$ is contained in the ideal $I_\v$.
\end{lemma}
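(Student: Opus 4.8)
The plan is to take an arbitrary generator of $\In_\w(I)$, namely $\In_\w(p)$ for some $p \in I$, and show directly that it lies in $I_\B$, i.e.\ that it maps to $0$ under the homomorphism $\pi_{\gr} \colon \k[\x] \to \gr_\v(A)$ sending $x_i \mapsto \bar b_i$. Write $p(\x) = \sum_\alpha c_\alpha \x^\alpha$, let $m = \MIN\{M\alpha \mid c_\alpha \neq 0\}$, so that $\In_\w(p) = \sum_{M\beta = m} c_\beta \x^\beta$. Since $p \in I = \ker(\k[\x] \to A)$, we have $\sum_\alpha c_\alpha b_1^{\alpha_1}\cdots b_n^{\alpha_n} = 0$ in $A$. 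First I would split this sum into the ``leading part'' $p_0 = \sum_{M\beta = m} c_\beta \x^\beta$ (which is exactly $\In_\w(p)$) and the ``higher part'' $p_1 = \sum_{M\alpha \succ m} c_\alpha \x^\alpha$.

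The key point is the valuation estimate: for any monomial $\x^\alpha$, the element $b_1^{\alpha_1}\cdots b_n^{\alpha_n} \in A$ satisfies $\v(b^\alpha) \succeq \sum_i \alpha_i \v(b_i) = M\alpha$ (here $M\alpha$ denotes the sum $\sum_i \alpha_i a_i$ of columns of $M$), by repeated application of axiom (2) in Definition~\ref{def-valuation} — and in fact with equality, since axiom~(2) gives $\v(fg)=\v(f)+\v(g)$ exactly, so $\v(b^\alpha) = M\alpha$. Consequently $\v\big(p_1(b_1,\ldots,b_n)\big) \succeq \MIN\{M\alpha \mid c_\alpha \neq 0,\ M\alpha \succ m\} \succ m$, using axiom~(1). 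Since $p_0(b) + p_1(b) = p(b) = 0$ in $A$, we get $p_0(b_1,\ldots,b_n) = -p_1(b_1,\ldots,b_n)$, hence $\v\big(p_0(b_1,\ldots,b_n)\big) \succ m$ (or $p_0(b)=0$). But every monomial $c_\beta b^\beta$ appearing in $p_0$ has $\v(c_\beta b^\beta) = M\beta = m$ exactly; therefore in $\gr_\v(A)$ the homogeneous element $\pi_{\gr}(p_0) = \sum_{M\beta=m} c_\beta \bar b_1^{\beta_1}\cdots \bar b_n^{\beta_n}$ lives in the degree-$m$ piece $F_{\v \succeq m}/F_{\v \succ m}$, and it equals the image of $p_0(b) \in F_{\v \succ m}$, hence is zero. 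Thus $\In_\w(p) = p_0 \in \ker(\pi_{\gr}) = I_\B$.

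Since $\In_\w(I)$ is generated by the elements $\In_\w(p)$ for $p \in I$, and each such generator lies in the ideal $I_\B$, we conclude $\In_\w(I) \subseteq I_\B$.

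The only subtle point — and the one I would be careful about — is bookkeeping around the total order $\succ$ and the partial order $\succ_\w$: one must make sure that the monomials in $p_0$ all have \emph{equal} $M$-value $m$ (so $\pi_{\gr}(p_0)$ is genuinely homogeneous of degree $m$ in $\gr_\v(A)$), and that every monomial in $p_1$ has $M$-value \emph{strictly} greater than $m$ in the order $\succ$, so that axiom~(1) of the valuation forces $\v(p_1(b)) \succ m$. Both of these are immediate from the definition of $\In_\w$ in \eqref{equ-in_w} and the fact that $\MIN$ is taken with respect to the total order $\succ$, so there is no real obstacle — the argument is essentially the standard ``initial forms of relations are relations among initial forms'' observation, adapted to the matrix weighting.
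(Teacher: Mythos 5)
Your proof is correct and follows essentially the same route as the paper's: both use the computation $\v(b_1^{\alpha_1}\cdots b_n^{\alpha_n}) = M\alpha$ together with the non-Archimedean inequality to conclude that $\v\bigl(\In_\w(p)(b_1,\ldots,b_n)\bigr) \succ m$ (the paper phrases this as a contradiction with $p(b)=0$, you argue directly via $p_0(b)=-p_1(b)$), and then observe that the image of $\In_\w(p)$ in the degree-$m$ piece of $\gr_\v(A)$ vanishes. The extra bookkeeping you supply about homogeneity of $p_0$ and strictness for $p_1$ is exactly what the paper leaves implicit, so there is no gap.
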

\begin{proof}
Let $p(\x) = \sum_\alpha c_\alpha \x^\alpha \in I$, i.e. $p(b_1, \ldots, b_n) = 0$.  We 
note that for any monomial $c_\alpha \x^\alpha$, its valuation $\v(c_\alpha \x^\alpha)$ is given by
\begin{equation} \label{equ-val-monomial}
\v(c_\alpha b_1^{\alpha_1} \cdots b_n^{\alpha_n}) = M \alpha,
\end{equation}
where $\alpha = (\alpha_1, \ldots, \alpha_n)$. From \eqref{equ-val-monomial} and the non-Archimedean property of $\v$ (Definition \ref{def-valuation}(1))
we see that $\v(\In_\w(p)(b_1, \ldots, b_n)) \succ m = m(p)$. Because otherwise, $\v(p(b_1, \ldots, b_n)) = m$ which contradicts the fact that $p(b_1, \ldots, b_n) = 0$.
Thus, the image of $\In_\w(p)$ in the quotient space $F_{\v \succeq m} / F_{\v \succ m}$ is $0$, i.e. $\In_\w(p) \in I_\v$ as required.
\end{proof}

The next theorem gives necessary and sufficient conditions for a set $\B$ of algebra generators to be a Khovanskii basis.
\begin{theorem} \label{th-Khovanskii-basis-equiv-conditions}
Let $\B = \{b_1, \ldots, b_n\}$ be a set of algebra generators for $A$. The following conditions are equivalent.
\begin{itemize}
\item[(1)] $\B$ is a Khovanskii basis.
\item[(2)] The ideals $\In_\w(I)$ and $I_\v$ coincide.
\item[(3)] Let $\{p_1, \ldots, p_s\}$ be $M$-homogeneous generators for the ideal $I_\v$. Then, for $i=1, \ldots, s$, 
the subduction algorithm (Algorithm \ref{algo-subduction}) is applicable to represent \linebreak $p_i(b_1, \ldots, b_n)$ as a polynomial in the $b_i$.
\end{itemize}
\end{theorem}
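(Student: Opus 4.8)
The plan is to prove the chain of implications $(1)\Rightarrow(3)\Rightarrow(2)\Rightarrow(1)$, following the template of the SAGBI-basis criterion in \cite[Chapter 11]{St} but working inside $\gr_\v(A)$ rather than inside a polynomial ring. The key fact to keep in mind throughout is Lemma \ref{lem-in_w-I_B}, which already gives the inclusion $\In_\w(I) \subseteq I_\B$ for free; so the content of condition $(2)$ is really the reverse inclusion $I_\B \subseteq \In_\w(I)$.

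For $(1)\Rightarrow(3)$: assume $\B$ is a Khovanskii basis, and let $p$ be any one of the generators $p_i$ of $I_\B$. Then $p(\bar b_1,\ldots,\bar b_n)=0$ in $\gr_\v(A)$, which says precisely that $\v(p(b_1,\ldots,b_n)) \succ m$, where $m$ is the $\succ_\w$-value of $p$; equivalently, the leading term of $p(b_1,\ldots,b_n)$ cancels. Since $\B$ is a Khovanskii basis and $S(A,\v)$ is maximum well-ordered (our running assumption), the subduction algorithm terminates on any element of $A$ by Proposition \ref{prop-subduction}, in particular on $p_i(b_1,\ldots,b_n)$. This step is essentially immediate once one unwinds definitions.

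For $(3)\Rightarrow(2)$: here is the heart of the argument, and I expect this to be the main obstacle. I must show $I_\B \subseteq \In_\w(I)$. Take a binomial (or general) element $q \in I_\B$; I want to produce $P \in I$ with $\In_\w(P) = q$ (up to the relevant homogeneity). The idea is: $q = q(x_1,\ldots,x_n)$ is $\succ_\w$-homogeneous of some value $m$ and $q(\bar b_1,\ldots,\bar b_n)=0$, so $f:=q(b_1,\ldots,b_n) \in A$ satisfies $\v(f)\succ m$. Now run the (terminating, by hypothesis via $(3)$ together with Theorem-style bookkeeping, or directly by the maximum well-ordered hypothesis) subduction algorithm on $f$: it expresses $f = \tilde q(b_1,\ldots,b_n)$ for an explicit polynomial $\tilde q$ with every monomial of $\tilde q$ having $\succ_\w$-value $\succ m$. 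Then $P := q - \tilde q$ lies in $I$ (since $P(b_1,\ldots,b_n)=f-f=0$) and its $\succ_\w$-initial form is exactly the $m$-graded part of $q$, namely $q$ itself. Hence $q = \In_\w(P) \in \In_\w(I)$. The delicate point is to make sure that it suffices to check this only on the \emph{generators} $p_i$ of $I_\B$ rather than on all of $I_\B$: one argues that $\In_\w(I)$ is already an ideal and is $\succ_\w$-graded, so it is enough that each homogeneous generator of $I_\B$ lies in it, and then a standard ``staircase''/degree induction (using that $I_\B/\!\In_\w(I)$ has no minimal-value obstructions left) promotes this to all of $I_\B$ — this is where one must be careful that the partial order $\succ_\w$ can have incomparable elements, so one works value-by-value in $\Z^r$ using that $\succ$ is a total order on $\Z^r$.

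For $(2)\Rightarrow(1)$: suppose $\In_\w(I) = I_\B$. We want: the $\bar b_i$ generate $\gr_\v(A)$. Take $0\neq f\in A$ with $\v(f)=a$; I must write $\bar f$ as a polynomial in the $\bar b_i$. Write $f = g(b_1,\ldots,b_n)$ for some $g\in\k[\x]$ (possible since the $b_i$ generate $A$). Split $g$ into its $\succ_\w$-homogeneous pieces and let $m$ be the minimal value among those pieces whose image in $\gr_\v(A)$ is nonzero; then $\v(f) = m$ provided no cancellation occurs among the leading pieces — and if cancellation does occur at value $m$, the offending $m$-graded part $g_m$ satisfies $g_m(\bar b_i)=0$, i.e. $g_m \in I_\B = \In_\w(I)$, so $g_m = \In_\w(P)$ for some $P\in I$; subtracting $P$ from $g$ (which changes nothing about the element $f$ it represents) strictly raises the minimal value, and by the maximum well-ordered hypothesis this process terminates, leaving an expression whose leading $\succ_\w$-piece does not cancel. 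That leading piece, evaluated at the $\bar b_i$, equals $\bar f$. Hence $\bar f$ is a polynomial in the $\bar b_i$, as desired. This direction is a clean application of the same ``subtract an element of $I$ to kill the cancellation'' mechanism, now phrased directly in $\gr_\v(A)$, and should go through without new difficulties.
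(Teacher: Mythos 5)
Your proposal is correct and follows essentially the same route as the paper: your $(3)\Rightarrow(2)$ is exactly the paper's device of running subduction on $p_i(b_1,\ldots,b_n)$ and reading off $p_i=\In_\w(p_i-q_i)\in\In_\w(I)$, and your direct, terminating version of $(2)\Rightarrow(1)$ is just the contrapositive of the paper's maximal-counterexample argument, both resting on the maximum well-ordered hypothesis. The only cosmetic differences are that you organize everything into the single cycle $(1)\Rightarrow(3)\Rightarrow(2)\Rightarrow(1)$ (the paper proves $(1)\Leftrightarrow(2)$ separately and routes $(3)\Rightarrow(1)$ through $(2)$), and your worried ``staircase induction'' for passing from the generators $p_i$ to all of $I_\B$ is unnecessary, since $\In_\w(I)$ is an ideal containing those generators.
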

\begin{proof}
Recall that for any $0 \neq f \in A$ we let $\bar{f}$ denote its image in $\gr_\v(A)$.
(1) $\Rightarrow$ (2). Let $p(\x) = \sum_\alpha c_\alpha \x^\alpha \in I_\v$ be an $M$-homogeneous polynomial. As before let $m(p) = \MIN\{ M\alpha \mid c_\alpha \neq 0\}$. Also let $a = \v(p(b_1, \ldots, b_n))$. We know that $p(\bar{b}_1, \ldots, \bar{b}_n) = 0$. This implies that $a \succ m(p)$.
Since $\B$ is assumed to be a Khovanskii basis, as in the proof of Lemma \ref{lem-Khov-basis-value-semigp}, we can find a polynomial $p_1(\x) = \sum_\beta c'_\beta \x^\beta$ such that 
$\overline{p(b_1, \ldots, b_n)} = p_1(\bar{b}_1, \ldots, \bar{b}_n)$
and moreover for every monomial $c_\beta \x^\beta$ appearing in $p_1$ we have $M\beta = a$. Continuing with the subduction algorithm applied to $p(b_1, \ldots, b_n)$ (Algorithm \ref{algo-subduction}) we obtain a polynomial 
$q(\x) = p_1(\x) + q_1(\x)$ such that $p(b_1, \ldots, b_n) = q(b_1, \ldots, b_n)$ and $\In_\w(q) = \In_\w(p_1)$. It follows that $p - q \in I$ and also $\In_\w(p - q) = p$. This shows that $p \in \In_\w(I)$ as required.

(2) $\Rightarrow$ (1). Let $A'$ denote the subalgebra of $\gr_\v(A)$ generated by the $\bar{b}_i$. Suppose by contradiction that $\In_\w(I) = I_\v$ but $\B$ is not a Khovanskii basis. Then there exists $p(\x) = \sum_\alpha c_\alpha \x^\alpha \in \k[\x]$ such that 
\begin{equation} \label{equ-v-notin-S'}
\overline{p(b_1, \ldots, b_n)} \notin A'. 
\end{equation}
Note that $m(p) = \MIN\{M \alpha \mid c_\alpha \neq 0\}$ is a nonnegative integer linear combination of the $\v(b_i)$ and hence $m(p) \in S'$, the semigroup generated by the $\v(b_i)$. 
By assumption, the value semigroup $S$, and hence its subsemigroup $S'$, are 
maximum well-ordered. Thus, without loss of generality, we can assume that $m(p)$ is maximum among all the polynomials 
satisfying \eqref{equ-v-notin-S'}. For \eqref{equ-v-notin-S'} to hold, we must have 
$\v(\In_\w(p)(b_1, \ldots, b_n)) \succ m(p)$ which shows that $p \in I_\v$. From the equality of $I_\v$ and $\In_\w(I)$ we then conclude that there exists 
$q \in I$ such that $\In_\w(q) = \In_\w(p)$.  Since $q \in I$ we see that $(p-q)(b_1, \ldots, b_n) = p(b_1, \ldots, b_n)$ and hence 
$\overline{(p-q)(b_1, \ldots, b_n)} = \overline{p(b_1, \ldots, b_n)} \notin A'$. On the other hand, $\In_\w(q) = \In_\w(p)$ implies that $m(p-q) \succ m(p)$.
This contradicts that $m(p)$ was maximum among the polynomials satisfying \eqref{equ-v-notin-S'}. This finishes the proof.
 
(1) $\Rightarrow$ (3) follows from definitions, we only need to prove (3) $\Rightarrow$ (1). By Lemma \ref{lem-in_w-I_B} and (2) above it is enough to show that $I_\v \subset \In_M(I)$. Let $1 \leq i \leq n$. Since $p_i(\bar{b}_1, \ldots, \bar{b}_n) = 0$ we know that $\v(p_i(b_1, \ldots, b_n))$ is strictly greater than $m(p_i)$. By assumption, the subduction algorithm (Algorithm \ref{algo-subduction}) produces a polynomial $q_i(\x)$ such that $p_i(b_1, \ldots, b_n) = q_i(b_1, \ldots, b_n)$ and $m(q_i) \succ m(p_i)$.  
Thus, $p_i - q_i \in I$ and $p_i = \In_\w(p_i - q_i) \in \In_\w(I)$. It follows that $I_\v \subset \In_\w(I)$. \end{proof}

We can now present an algorithm to find a finite Khovanskii basis starting from a set of algebra generators, provided that such a basis exists. 
\begin{algorithm}[Finding a finite Khovanskii basis] \label{algo-find-Khovanskii-basis}
{\bf Input:} A finite set of $\k$-algebra generators $\{b_1, \ldots, b_n\}$ for $A$. {\bf Output:} A finite Khovanskii basis $\mathcal{B}$.
\begin{itemize}
\item[(0)] Put $\B = \{b_1, \ldots, b_n\}$. Let $\bar{\B}$ be the image of $\B$ in $\gr_\v(A)$.
\item[(1)] Let $I_\v$ be the kernel of
homomorphism $\k[x_1, \ldots, x_n] \to \gr_\v(A)$.  
Let $G$ be a finite set of generators for $I_\v$.
\item[(2)] Take an element $g \in G$. 
Let $h \in A$ be the element obtained by plugging $b_i$ for $x_i$ in $g$, $i=1, \ldots, n$. Let $\bar{h}$ denote the image of $h$ in $\gr_\v(A)$. 
\item[(3)] Verify if $\bar{h}$ lies in the subalgebra generated by $\bar{\B}$.
\item[(4)] If this is the case, find a polynomial $p(x_1, \ldots, x_n)$ such that $\bar{h} = p(\bar{b}_1, \ldots, \bar{b}_n)$. This means that either $h = p(b_1, \ldots, b_n)$ or $\v(h - p(b_1, \ldots, b_n)) \succ \v(h)$. Put $h_1 = h - p(b_1, \ldots, b_n)$. If $h_1 = 0$ go to the step (6). Otherwise, replace $h$ with $h_1$ and go to the step (3).
\item[(5)] If $\bar{h}$ does not lie in the subalgebra generated by $\bar{\B}$ then add $h$ to $\B$. 
\item[(6)] Repeat until there are no generators left in $G$. 
\item[(7)] If no elements where added to $G$ then $\B$ is our desired finite Khovanskii basis. Otherwise go to step (1). 
\end{itemize}
\end{algorithm}

\begin{corollary} \label{cor-find-Khovanskii-basis-terminates}
Algorithm \ref{algo-find-Khovanskii-basis} terminates in a finite number of steps if and only if $(A, \v)$ has a finite Khovanskii basis.
\end{corollary}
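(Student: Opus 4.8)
The plan is to prove the two implications of the equivalence separately, using throughout the standing hypothesis of this subsection that $S=S(A,\v)$ is maximum well-ordered, so that any run of the subduction algorithm (Algorithm \ref{algo-subduction}) either completes or gets stuck, but never runs forever. The implication ``Algorithm \ref{algo-find-Khovanskii-basis} terminates $\Rightarrow$ a finite Khovanskii basis exists'' is the easy one. If the algorithm halts, its output $\B$ is obtained from the input generators $\{b_1,\dots,b_n\}$ by adjoining finitely many elements of $A$, so it is finite and still generates $A$; and the algorithm can only halt at step (6), which is reached only when the inner loop (steps (3)--(4)) has returned $h_1=0$ for each $p_i(b_1,\dots,b_n)$, where $\{p_1,\dots,p_s\}$ is a generating set of $I_\B$ --- that is, the subduction algorithm terminates on each $p_i(b_1,\dots,b_n)$. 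By the implication $(3)\Rightarrow(1)$ of Theorem \ref{th-Khovanskii-basis-equiv-conditions}, $\B$ is then a Khovanskii basis.

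For the converse, assume $(A,\v)$ has a finite Khovanskii basis, and write $\B_k$ for $\B$ after the $k$-th execution of step (5) (so $\B_0=\{b_1,\dots,b_n\}$) and $\overline{\B_k}\subset\gr_\v(A)$ for the image of $\B_k$. The first observation I would make is that step (5) adjoins an element $h$ only when $\bar h\notin\k[\overline{\B_k}]$, so as long as the algorithm keeps running
\[
\k[\overline{\B_0}]\subsetneq\k[\overline{\B_1}]\subsetneq\k[\overline{\B_2}]\subsetneq\cdots
\]
is a strictly increasing chain of subalgebras of $\gr_\v(A)$; moreover, by Definition \ref{def-Khovanskii-basis}, $\B_k$ is a Khovanskii basis exactly when $\k[\overline{\B_k}]=\gr_\v(A)$, and once that occurs the algorithm halts, since by $(1)\Rightarrow(3)$ of Theorem \ref{th-Khovanskii-basis-equiv-conditions} no generator of $I_{\B_k}$ can then fail to subduct to $0$. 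Since a finite Khovanskii basis exists, $\gr_\v(A)$ is a finitely generated $\k$-algebra (in the case of main interest, where $\v$ has one-dimensional leaves, $\gr_\v(A)\cong\k[S]$ with $S$ finitely generated by Proposition \ref{prop-grA-semigp-algebra} and Lemma \ref{lem-Khov-basis-value-semigp}, and the chain above is then a strictly increasing chain of subsemigroups of $S$). It therefore suffices to show this chain reaches $\gr_\v(A)$ after finitely many steps, and I would do so by proving that $A_\infty:=\bigcup_k\k[\overline{\B_k}]$ equals $\gr_\v(A)$: granting this, finitely many algebra generators of $\gr_\v(A)$ lie in a single $\k[\overline{\B_K}]$, so $\k[\overline{\B_K}]=\gr_\v(A)$ and the algorithm has halted by the $K$-th adjunction, contradicting non-termination.

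To prove $A_\infty=\gr_\v(A)$ I would run a Noetherian induction on the value, using maximum well-orderedness of $S$: if some homogeneous $\bar f\notin A_\infty$, pick $f$ with $\v(f)$ maximal among such, write $f$ as a polynomial $P$ in the generators $\B_0$, and split $P$ into its initial form with respect to the weighting by the values $\v(b_i)$ plus a remainder of strictly larger value; the remainder's image lies in $A_\infty$ by maximality of $\v(f)$, while the initial form is a relation lying in $I_{\B_0}$ (a variant of Lemma \ref{lem-in_w-I_B}) and should be disposed of using that the algorithm runs through all generators of $I_{\B_0}$ --- and of the enlarged ideals $I_{\B_k}$ after each step (5) --- strictly raising values at every subduction step. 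The step I expect to be the real obstacle is exactly this bookkeeping: the algorithm subducts only the \emph{generators} of each $I_{\B_k}$, not arbitrary ideal elements, and it restarts the generator list each time step (5) is performed, discarding the generators of $I_{\B_{k-1}}$ not yet processed; one must therefore verify that this restarting is harmless, so that every element of $A$ is nonetheless eventually reduced into $A_\infty$, using that each $\B_k$ still generates $A$, that each $I_{\B_k}$ is finitely generated, and that values strictly increase along subduction in the maximum well-ordered semigroup $S$. In the one-dimensional-leaves setting this amounts to the transparent-sounding but still delicate assertion that the particular strictly increasing chain of subsemigroups of the finitely generated semigroup $S$ produced by the algorithm must stabilize, which parallels the corresponding fact for SAGBI bases in \cite[Chapter 11]{St}.
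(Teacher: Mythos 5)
Your forward implication is complete and is exactly what the paper's one-line proof (``Follows from Theorem \ref{th-Khovanskii-basis-equiv-conditions}'') intends: if the algorithm halts, the final $\B$ is a finite generating set for $A$ and every generator of $I_\B$ has subducted to $0$, which is condition (3) of that theorem, so $\B$ is a Khovanskii basis by $(3)\Rightarrow(1)$. Your converse follows the standard Robbiano--Sweedler/SAGBI strategy, and you are right that its entire weight falls on the lemma $A_\infty:=\bigcup_k\k[\overline{\B_k}]=\gr_\v(A)$: strict increase of the chain together with finite generation of $\gr_\v(A)$ is \emph{not} by itself enough (in $\k[x,y]$ the chain $\k[x,xy]\subsetneq\k[x,xy,xy^2]\subsetneq\cdots$ never stabilizes, and the semigroup chain $\langle(1,0),(1,1),\ldots,(1,n)\rangle\subset\Z_{\geq 0}^2$ shows the same phenomenon in the one-dimensional-leaves setting), so reducing to ``the union is everything'' is the correct move. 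You also correctly observe that once some $\B_K$ generates $\gr_\v(A)$, step (5) can never fire again and $(1)\Rightarrow(3)$ forces termination.

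The gap you flag is, however, genuine: the maximality argument you sketch is the one from $(2)\Rightarrow(1)$ of Theorem \ref{th-Khovanskii-basis-equiv-conditions}, but that argument needs every element of $I_{\B_k}$ to be rewritable, modulo a relation in $I$, as something of strictly larger initial value, whereas the algorithm only guarantees this for those generators it actually finishes processing before step (5) fires and the generator list is recomputed. As written, a homogeneous piece of $I_{\B_K}$ hit by your induction may decompose over generators that were discarded on a restart, and the induction stalls there. To close this one must either (i) arrange that the generating set chosen in step (1) always contains the previous one (legitimate, since the old relations persist in $I_{\B_{k+1}}$) together with a fairness condition ensuring every listed generator is eventually processed, and then run your Noetherian induction over $\B_\infty$; or (ii) otherwise produce some $K$ with $\k[\overline{\B_K}]=\gr_\v(A)$ directly. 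The paper supplies none of these details itself, and your write-up is more honest than the original about where the work lies; but the converse as you give it is a correct reduction plus an unproven (though true, and provable along the lines you indicate) key lemma, not yet a complete proof.
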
 
\begin{proof}
It follows from Theorem \ref{th-Khovanskii-basis-equiv-conditions} that if Algorithm  \ref{algo-find-Khovanskii-basis} terminates then $\B$ is a Khovanskii basis for $(A, \v)$. Now suppose $(A, \v)$ has a finite Khovanskii basis. We would like to show that the algorithm terminates. After $i$-th iteration of the algorithm (step (7)) we obtain an algebra generating set $\B_i$ for $A$ with $\B=\B_0 \subset \B_1 \subset \B_2 \subset \cdots$. Let $\hat{\B} = \bigcup_i \B_i$. We note that Theorem \ref{th-Khovanskii-basis-equiv-conditions}, with the same proof, still holds for a possibly infinite algebra generating set. Thus because of the way the sets $\B_i$ are constructed, Theorem \ref{th-Khovanskii-basis-equiv-conditions} can be applied to $\hat{\B}$ to conclude that it is a Khovanskii basis. Now since by assumption $(A, \v)$ has a finite Khovanskii basis, it is easy to see that $\hat{\B}$ contains a finite Khovanskii basis and hence the algorithm must have terminated in finite time.
\end{proof}

\subsection{Background on Newton-Okounkov bodies}   \label{subsec-NO-bodies}
Finally we briefly discuss the definition and main properties of a Newton-Okounkov body associated to a positively graded algebra $A$. It is a convex body which encodes information about the asymptotic behavior of Hilbert function of $A$. It is a far generalization of the Newton polytope of a 
projective toric variety. Our presentation here is close to the approach in \cite{KK}. 

We begin with the definition of a Newton-Okounkov cone.
\begin{definition}[Newton-Okounkov cone]  \label{def-Newton-Okounkov-cone}
Let $A$ be a (not necessarily graded) domain. Let $\v: A \setminus\{0\} \to \Z^r$ be a valuation. 
We define the {\it Newton-Okounkov cone} $P(A, \v)$ to be the closure of the convex hull of $S$, where $S = S(A, \v)$ is the value semigroup. Note that $0 \in S$ because by assumption $\v$ has value $0$ on $\k$. 
\end{definition}
We note that if $S$ is a finitely generated semigroup then the cone $P(A, \v)$ is a rational polyhedral cone, but the converse is not true (see for example \cite[Example 6]{Anderson}).

Now we follow \cite[Section 2.3]{KK} and take $A = \bigoplus_{i \geq 0} A_i$ to be a positively graded algebra and domain. Without loss of generality we can assume  that $A$ is embedded, as a graded $\k$-algebra, into a polynomial ring $F[t]$ (in one indeterminate $t$) where $F$ is a field containing $\k$. For example one can take $F$ to be the degree $0$ part of the quotient field of $A$. Let $\v': F\setminus \{0\} \to \Z^r$ be a valuation.  We can extend $\v'$ to a valuation  $\v: A \setminus \{0\} \to \N \times \Z^r$ which refines the grading by degree as follows. Firstly, equip $\Z^{r+1}$ with the following group ordering: for $(m, a), (n, b) \in \Z \times \Z^r$, let us say that $(m, a) \succ (n, b)$ if either $m < n$, or $m=n$ and $a \succ b$. Now let $f \in A$ be an element of degree $m$ and write $f = \sum_{i=0}^m f_i$ as sum of its homogeneous components. We put $\v(f) = (m, \v'(f_m))$. One verifies that $\v$ is a valuation and moreover, if $\v'$ has one-dimensional leaves then $\v$ also has one-dimensional leaves. 

\begin{definition}[Newton-Okounkov body]  \label{def-NO-body}
Let $(A, \v)$ be as above. The {\it Newton-Okounkov body $\Delta(A, \v)$} is defined to be the intersection of the Newton-Okounkov cone $P(A, \v)$ with 
the plane $\{1\} \times \R^r$. Alternatively, $\Delta(A, \v)$ can be defined as:
$$\Delta(A, \v) = \overline{\conv(\bigcup_{i>0} \{\v'(f)/i \mid 0 \neq f \in A_i \})} \subset \R^r.$$
\end{definition}

\begin{remark} \label{rem-NO-body-bounded}
Note that in the definition we do not require that $A$ is a finitely generated algebra. 
Without any assumption on $A$ the corresponding set 
$\Delta(A, \v)$ may be unbounded and not interesting. One shows that if $A$ is contained in a finitely generated graded algebra (in particular if $A$ itself is finitely generated) 
then the corresponding $\Delta(A, \v)$ is bounded and hence is a convex body. 
\end{remark}

The following is the main result about the Newton-Okounkov bodies of graded algebras. Let $A$ be a positively graded algebra. As above equip $A$ with a valuation $\v: A \setminus \{0\} \to \N \times \Z^r$. Recall that the Hilbert function of $A$ is the function $H_A: \N \to \N$ defined by $H_A(i) = \dim_\k(A_i)$, for all $i$.
\begin{theorem}  \label{th-NO-bodies-main}
Let us assume that $A$ is contained in a finitely generated algebra. Also assume that the valuation $\v$ has one-dimensional leaves. We then have
$$\lim_{i \to \infty} \frac{H_A(i)}{i^q} = \vol_q(\Delta(A, \v)),$$
where $q$ is the dimension of the Newton-Okounkov body $\Delta(A, \v)$ and $\vol$ denotes the (appropriately normalized) $q$-dimensional volume
in the affine span of $\Delta(A, \v)$.
\end{theorem}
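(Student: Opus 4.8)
The plan is to reduce the statement about the Hilbert function of $A$ to a statement about the counting function of the value semigroup $S = S(A,\v)$, and then apply the convex-geometric result of Khovanskii on counting lattice points in semigroups. First I would use the hypothesis that $\v$ has one-dimensional leaves: by Proposition \ref{prop-grA-semigp-algebra} this gives $\gr_\v(A) \cong \k[S]$, and more importantly it gives, for each degree $i$, an equality of dimensions
\begin{equation*}
\dim_\k(A_i) = \# \{ (i, a) \in S \mid a \in \Z^r \} = \# \big( S \cap (\{i\} \times \Z^r) \big).
\end{equation*}
The point is that the valuation $\v$ on $A$ was built (in the paragraph before Definition \ref{def-NO-body}) to refine the degree grading, so that the restriction of $\v$ to the finite-dimensional space $A_i$ is an injective map on the level of leading terms: distinct leaves in $A_i$ have distinct $\v$-values, and the one-dimensional-leaves property means the number of such values is exactly $\dim_\k A_i$. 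I would spell out this bijection carefully, since it is the bridge between algebra and combinatorics.

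Next I would invoke the boundedness input. Because $A$ is contained in a finitely generated graded algebra $B$, the value semigroup $S = S(A,\v)$ sits inside $S(B,\v')$ for a valuation on $B$ extending $\v$ in the natural way, and $S(B,\v')$ generates a finitely generated cone; in particular $S$ is contained in a finitely generated cone, and by Remark \ref{rem-NO-body-bounded} the Newton-Okounkov body $\Delta(A,\v)$ is bounded. This is precisely the hypothesis needed to run Khovanskii's theorem on semigroups: for a semigroup $S \subset \N \times \Z^r$ sitting in a strongly convex cone $C$ with $\{1\}$-slice a bounded convex body $\Delta$, the counting function $i \mapsto \#(S \cap (\{i\}\times \Z^r))$ grows like $\vol_q(\Delta)\, i^q$ where $q = \dim \Delta$. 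I would cite the relevant statement from \cite{KK} (this is \cite[Theorem 2.31]{KK}, referenced in the introduction) or \cite{Ok}, rather than reprove it, noting only that it requires exactly the two hypotheses we have arranged: boundedness of $\Delta$ and the identification of $\dim_\k A_i$ with the slice-counting function.

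Combining the two steps gives
\begin{equation*}
\lim_{i\to\infty} \frac{H_A(i)}{i^q} = \lim_{i\to\infty} \frac{\#\big(S \cap (\{i\}\times\Z^r)\big)}{i^q} = \vol_q(\Delta(A,\v)),
\end{equation*}
with $q = \dim \Delta(A,\v)$, as claimed. (I note the excerpt writes $k^q$ in the denominator, evidently a typo for $i^q$.) The main obstacle, and the step deserving the most care, is the first one: verifying cleanly that $\dim_\k A_i$ equals the number of points of $S$ in degree $i$. One must check that the leading-term map $A_i \setminus\{0\} \to S \cap (\{i\}\times\Z^r)$ is well-defined and surjective, that a set of elements of $A_i$ with pairwise distinct $\v$-values is $\k$-linearly independent (a standard valuation argument), and that the one-dimensional-leaves hypothesis forces the value set in degree $i$ to have size exactly $\dim_\k A_i$ rather than merely bounding it. Everything after that is a black-box application of the Okounkov–Khovanskii volume asymptotic, for which I would simply quote \cite{KK}.
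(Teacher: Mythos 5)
The paper states this theorem as recalled background and gives no proof, deferring to \cite{KK}; your outline is exactly the standard argument from that reference (one-dimensional leaves identifies $H_A(i)$ with the number of semigroup points in the degree-$i$ slice, containment in a finitely generated algebra gives boundedness of $\Delta(A,\v)$, and the Okounkov--Khovanskii semigroup asymptotic does the rest). Your reduction is correct, and you are right that $k^q$ in the displayed formula is a typo for $i^q$.
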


\begin{corollary}  \label{cor-NO-bodies-degree-proj-var}
Let $Y$ be a projective variety of dimension $d$ sitting in a projective space $\mathbb{P}^N$. Let $A$ be the homogeneous coordinate ring of $Y$. Equip $A$ with a valuation $\v$ with one-dimensional leaves as above. Then the degree of $Y$ is equal to $d!$ times the volume of the convex body 
$\Delta(A, \v) \subset \R^d$.
\end{corollary}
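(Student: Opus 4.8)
The plan is to deduce Corollary \ref{cor-NO-bodies-degree-proj-var} from Theorem \ref{th-NO-bodies-main} by a direct comparison of the two asymptotic invariants attached to the graded ring $A$: the leading term of its Hilbert polynomial and the volume of its Newton–Okounkov body. First I would recall that since $Y \subset \mathbb{P}^N$ is projective of dimension $d$, its homogeneous coordinate ring $A = \bigoplus_{i \geq 0} A_i$ is a finitely generated positively graded $\k$-algebra and domain (we take $Y$ irreducible and reduced, so $A$ is a domain), with $A_0 = \k$; in particular each $A_i$ is finite dimensional and $A$ is trivially contained in a finitely generated algebra, so the hypotheses of Theorem \ref{th-NO-bodies-main} are met once we equip $A$ with a valuation $\v$ of the form constructed in Section \ref{subsec-NO-bodies} (i.e. $\v \colon A \setminus \{0\} \to \N \times \Z^d$ refining the degree, built from a valuation $\v'$ on the degree-zero part of the fraction field) having one-dimensional leaves. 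Such a $\v'$, hence $\v$, exists: for instance take $\v'$ to be the minimum-term valuation attached to a system of parameters at a smooth point of $Y$, as in Example \ref{ex-valuation}(3), whose leaves are one-dimensional, and note the construction in Section \ref{subsec-NO-bodies} preserves the one-dimensional leaves property.

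Next I would invoke standard Hilbert–Serre theory: for $i \gg 0$ the Hilbert function $H_A(i) = \dim_\k A_i$ agrees with the Hilbert polynomial $P_A(i)$, which has degree exactly $d = \dim Y$ and leading term $\frac{\deg(Y)}{d!}\, i^d$ — this is precisely the definition of the degree of $Y$ relative to the embedding in $\mathbb{P}^N$. Consequently
\begin{equation} \label{equ-hilb-asymp}
\lim_{i \to \infty} \frac{H_A(i)}{i^d} = \frac{\deg(Y)}{d!}.
\end{equation}
On the other hand, Theorem \ref{th-NO-bodies-main} asserts that the same limit, with $i^d$ replaced by $i^q$ where $q = \dim \Delta(A, \v)$, equals $\vol_q(\Delta(A,\v))$. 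Comparing the orders of growth forces $q = d$: indeed the limit in \eqref{equ-hilb-asymp} is a finite positive number (since $\deg(Y) > 0$), so the exponent in the denominator for which the limit is finite and nonzero is unique, and it equals both $d$ and $q$. With $q = d$ established, the two limits must coincide, giving $\vol_d(\Delta(A,\v)) = \deg(Y)/d!$, equivalently $\deg(Y) = d! \cdot \vol_d(\Delta(A,\v))$, which is the claim.

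I do not expect any serious obstacle here; the corollary is essentially a dictionary translation. The one point that deserves care — and which I would state explicitly rather than grind through — is the matching of normalizations: the ``appropriately normalized $q$-dimensional volume in the affine span of $\Delta(A,\v)$'' appearing in Theorem \ref{th-NO-bodies-main} must be the one induced by the lattice $\Z^d$ (so that the standard unit simplex in the relevant affine hyperplane has the volume making the theorem's limit formula hold), and then $\vol_d$ in the corollary is just Euclidean volume on $\R^d$ scaled by $d!$; this is exactly the convention under which a lattice polytope $\Delta$ of full dimension $d$ satisfies $\lim_i H(i)/i^d = \vol(\Delta)$ for the associated semigroup algebra, so $d! \vol(\Delta)$ recovers the normalized lattice volume. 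The only other thing to verify is that $A$ is genuinely a domain, which holds when $Y$ is a variety (irreducible and reduced) as assumed; if one wanted the statement for arbitrary projective schemes one would need the more general form of the Newton–Okounkov machinery, but that is outside the scope of the corollary as stated.
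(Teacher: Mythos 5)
Your proof is correct and follows exactly the route the paper intends: the corollary is deduced from Theorem \ref{th-NO-bodies-main} by identifying the limit $\lim_i H_A(i)/i^d$ with $\deg(Y)/d!$ via Hilbert--Serre theory, and the growth-rate comparison correctly pins down $q=d$. The only minor blemish is the slightly garbled sentence about ``Euclidean volume scaled by $d!$'' in the normalization discussion, but the substance (lattice-normalized volume in the affine span, so that $d!\,\vol$ recovers the degree) is right and does not affect the argument.
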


\begin{remark}  \label{rem-toric-degen}
When $(A, \v)$ has a finite Khovanskii basis, the corresponding Newton-Okounkov body $\Delta(A, \v)$ is a rational polytope and we have a toric degeneration of $Y = \Proj(A)$ to a (not-necessarily normal) toric variety whose normalization is the toric variety associated to $\Delta(A, \v)$ (\cite{Anderson}, \cite[Section 7]{Kaveh-Crystal} and \cite{Teissier}).
\end{remark}

\subsection{Quasivaluations and filtrations}    \label{subsec-quasival-filtration}
It is conceptually useful to relax the valuation axioms and consider the so-called quasivaluations.
A quasivaluation differs from a valuation in that it is only superadditive with respect to multiplication. 

\begin{definition}     \label{def-quasival}
Let $(\Gamma, \succ)$ be a linearly ordered abelian group and let $A$ be a $\k$-algebra. A function $\v: A \setminus \{0\} \to \Gamma$ is said to be  a {\it quasivaluation} over $\k$ if the following properties hold:
\begin{enumerate}
\item For all $0 \neq f, g, f+g$ we have $\v(f + g) \succeq \MIN\{\v(f), \v(g)\}$.
\item For all $0 \neq f, g \in A$ we have $\v(fg) \succeq \v(f) + \v(g)$.
\item For all $0 \neq f \in A$ and $0 \neq c \in \k$ we have $\v(cf) = \v(f)$.
\end{enumerate}
It is sometimes useful to define a quasivaluation to be a map $\v: A \to \Gamma \cup \{\infty\}$ satisfying the above axioms, where $\infty$ is great than all elements in $\Gamma$.
\end{definition}

For the cases we consider $\Gamma$ will be $\Q^r$ with a linear ordering and $\v$ will be assumed to be discrete, i.e. its image is a discrete subset of $\Q^r$.  Similar to valuations, a quasivaluation $\v$ defines a corresponding filtration $\mathcal{F}_{\v} = \{F_{\v \succeq a} \mid a \in \Q^r\}$ on $A$. A {\it quasivaluation with one-dimensional leaves} is defined as before, namely we require that for each $a \in \Q^r$ the quotient space $F_{\v \succeq a} / F_{\v \succ a}$ is at most $1$-dimensional (see Definition \ref{def-valuation}(4)). \footnote{{As pointed out to us by Peter Littelmann, contrary to Theorem \ref{th-full-rank-1-dim-leaves} for valuations, one can find examples of full rank quasivaluations on an algebra over an algebraically closed filed $\k$ that do not have one-dimensional leaves.}}

Conversely, let $\mathcal{F} = \{F_a\}_{a \in \Q^r}$ be a decreasing algebra filtration of $A$ by $\k$-vector subspaces such that for any $0 \neq f \in A$ there exists $a \in \Q^r$ such that $f \in F_a \setminus \bigcup_{a' \succ a} F_{a'}$.  Then the function $\v_\F: A \setminus \{0\} \to \Q^r$ defined by:
\begin{equation}  \label{qf}
\v_\F(f) = \MAX\{a \in \Q^r \mid f \in F_a\},
\end{equation}
is a quasivaluation.
The two constructions of $\F_\v$ and $\v_\F$ are inverse to each other when $\v$ is discrete. For any filtration $\mathcal{F} = \{ F_a\}_{a \in \Q^r}$, one defines the associated graded algebra $\gr_{\mathcal{F}}(A)$ by
\begin{equation}
\gr_{\mathcal{F}}(A) = \bigoplus_{a \in \Q^r} F_a/F_{\succ a},
\end{equation}
where $F_{\succ a} = \bigcup_{a' \succ a} F_{a'}$. When $\mathcal{F} = \mathcal{F}_{\v}$ for some quasivaluation $\v$ we write $\gr_{\v}(A)$ instead of  
$\gr_\mathcal{F}(A)$. A discrete quasivaluation $\v$ is a valuation if and only if $\gr_{\v}(A)$ is a domain.  
A special case of the construction $\v_\F$ is the valuation associated to a grading in Example \ref{ex-valuation}(1).

\subsection{Adapted bases}
In this section we introduce the vector space counterpart of a Khovanskii basis.

\begin{definition}   \label{def-adapted-basis}
A $\k$-vector space basis $\mathbb{B} \subset A$ is said to be {\it adapted to a filtration $\mathcal{F} = \{F_a\}_{a \in \Q^r}$} if $F_a \cap \mathbb{B}$ is a vector space basis for $F_a$, for all $a$.  Similarly $\mathbb{B}$ is said to be adapted to a quasivaluation $\v$ if it is adapted to its associated filtration $\mathcal{F}_{\v}$. 
\end{definition}

{We would like to point out that when $\v$ has an adapted basis then the maximum in \eqref{qf} is always attained.}

\begin{example}
As in Example \ref{ex-valuation}(1) let $A = \bigoplus_{g \in \Gamma} A_g$ be a $\Gamma$-grading of an algebra $A$ where $\Gamma$ is an ordered group. For each $g \in \Gamma$ let $\mathbb{B}_g$ be a $\k$-vector space basis for $A_g$ and let $\mathbb{B} = \bigcup_{g \in \Gamma} \mathbb{B}_g$. It is straightforward to see that $\mathbb{B}$ is adapted to the valuation $\v$ associated to the $\Gamma$-grading. An important special case of this is considered in Section \ref{subsec-weight-quasival} where the set of monomials is an adapted basis for a polynomial algebra $\k[\x]$ with respect to any weight valuation. 
\end{example}

\begin{example}  \label{ex-adapted-basis-rep-theory}
Let $G$ be a connected reductive group over an algebraically closed characteristic $0$ field $\k$, and let $U \subset G$ be a maximal unipotent subgroup. As a $G$-module, the coordinate ring $\k[G/U]$ of the variety $G/U$ is known to decompose into a direct sum $\bigoplus_{\lambda \in \Lambda_+} V(\lambda)$ over all irreducible representations of $G$.  Each of these representations has a distinguished (dual) canonical basis $\mathbb{B}(\lambda) \subset V(\lambda)$ constructed by Lusztig (\cite{Lusztig}). The set $\mathbb{B} = \coprod_{\lambda \in \Lambda_+} \mathbb{B}(\lambda)$ is the dual canonical basis of $\k[G/U]$.  

For each reduced decomposition $\S$ of the longest word 
$w_0$ of the Weyl group of $G$ there is a valuation $\v_{\S}$ on the coordinate ring of $G/U$ which has one-dimensional leaves and is adapted to $\mathbb{B}$ (see \cite{Kaveh-Crystal, M-NOK}).  These are known as string valuations; they provide a method to construct toric degenerations of $G/U$ as well as any flag variety of $G$ (\cite{Caldero, AB, Kaveh-Crystal}).  

Other variants of adapted bases in representation theory are studied in greater generality by Feigin, Fourier, and Littelmann in \cite{FFL}, where they are called \emph{essential bases}. 
\end{example}
 
\begin{remark} \label{rem-adapted-basis-1dimleaves}
It immediately follows from the definition that the set of values of $\v$ on $A$ coincides with the set of values of $\v$ on any adapted basis $\mathbb{B}$. Moreover, if $\v$ has one-dimensional leaves then a subset $\mathbb{B}$ is an adapted basis if and only if $b \mapsto \v(b)$ gives a bijection between $\mathbb{B}$ and the set of values of $\v$.  
\end{remark}
We can formulate a vector space version of the subduction algorithm (Algorithm \ref{algo-subduction}). Let $\bar{\mathbb{B}} \subset \gr_{\v}(A)$ be a vector space basis consisting of homogeneous elements. Also let $\mathbb{B} \subset A$ be a lift of $\bar{\mathbb{B}}$ to $A$, i.e. for each $\bar{b} \in \bar{\mathbb{B}}$, we have a unique $b \in\mathbb{B}$ whose image is $\bar{b}$.

\begin{algorithm}[Vector space subduction] \label{algo-vec-space-subduct}
{\bf Input:} A vector space basis $\bar{\mathbb{B}} \subset \gr_{\v}(A)$, a lift $\mathbb{B} \subset A$ of $\bar{\mathbb{B}}$ and an 
element $f \in A$. {\bf Output:} An expression of $f$ as a linear combination of the elements in $\mathbb{B}$.
\begin{itemize}
\item[(1)] Compute $\v(f) = a$ and take the equivalence class $\bar{f} \in F_{\v \succeq a} / F_{\v \succ a}$.
\item[(2)] Express $\bar{f}$ as a linear combination of elements in $\bar{\mathbb{B}}$, that is, $\bar{f} = \sum_i c_i\bar{b}_i$.
\item[(3)] If $f = \sum_i c_ib_i$ we are done. Otherwise replace $f$ with $f - \sum_i c_i b_i \in F_{\v \succ a}$ and go to (1). 
\end{itemize}
\end{algorithm}

We have the following lemma. We omit the straightforward proof. 
\begin{lemma}\label{subductadapt}
A lift $\mathbb{B} \subset A$ of a basis $\bar{\mathbb{B}} \subset \gr_{\v}(A)$
is a vector space basis for $A$ (and hence a basis adapted to $\v$) if and only if Algorithm \ref{algo-vec-space-subduct} terminates for all $f \in A$ after a finite number of steps. In this case, we have the following: for any $0 \neq f \in A$ write $f = \sum_i c_ib_i$ as a linear combination of the basis elements $b_i \in \mathbb{B}$. Then $\v(f) = \MIN\{\v(b_i) \mid c_i \neq 0\}$. 
\end{lemma}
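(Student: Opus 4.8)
The plan is to prove both directions of the equivalence and then extract the formula for $\v(f)$ as a corollary of the terminating direction.

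First I would handle the ``only if'' direction: suppose $\mathbb{B}$ is a vector space basis for $A$, equivalently (since $\bar{\mathbb{B}}$ is a basis for $\gr_{\v}(A)$) that $\mathbb{B}$ is adapted to $\v$. I want to show Algorithm \ref{algo-vec-space-subduct} terminates. The key observation is that each pass through the loop strictly increases $\v(f)$ in the order $\succ$: after step (3) the new element $f - \sum_i c_i b_i$ lies in $F_{\v \succ a}$, so its value is strictly larger than $a = \v(f)$. Since $\mathbb{B}$ is adapted, every value produced this way lies in the value semigroup $S = S(A,\v)$, and we are assuming throughout this subsection that $S$ is maximum well-ordered (stated just before the subsection), so there can be no infinite strictly increasing chain of values. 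Hence the algorithm halts. I should also note the algorithm is well-defined at each stage: at step (1) the value $a = \v(f)$ exists because $\mathbb{B}$ being adapted means the maximum in \eqref{qf} is attained (remarked after Definition \ref{def-adapted-basis}), and at step (2) $\bar{f}$ genuinely lies in the span of $\bar{\mathbb{B}}$.

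Next the ``if'' direction: suppose Algorithm \ref{algo-vec-space-subduct} always terminates. I want to show $\mathbb{B}$ is a vector space basis for $A$. For spanning: running the algorithm on an arbitrary $0 \neq f \in A$ produces, upon termination, an expression $f = \sum_i c_i b_i$ with finitely many $b_i \in \mathbb{B}$, so $\mathbb{B}$ spans. For linear independence: suppose $\sum_i c_i b_i = 0$ with not all $c_i$ zero. Let $a = \MIN\{\v(b_i) \mid c_i \neq 0\}$; passing to $\gr_\v(A)$ in degree $a$, the sum of the terms with $\v(b_i) = a$ gives a nontrivial linear relation $\sum_{\v(b_i)=a} c_i \bar{b}_i = 0$ in $F_{\v \succeq a}/F_{\v \succ a}$, contradicting that $\bar{\mathbb{B}}$ is a basis (here I use that the $\bar{b}_i$ are distinct members of $\bar{\mathbb{B}}$ because $\mathbb{B}$ is a lift of $\bar{\mathbb{B}}$, together with the fact, from Definition \ref{def-valuation}(1) / the non-Archimedean property, that cancellation of the lowest-value terms cannot occur unless their images in $\gr$ already cancel). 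Hence $\mathbb{B}$ is a basis, and since its image $\bar{\mathbb{B}}$ is a basis of $\gr_\v(A)$, it is automatically adapted to $\v$ (one checks $F_a \cap \mathbb{B}$ spans $F_a$ by running the algorithm on an element of $F_a$ and noting all terms produced have value $\succeq a$).

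For the final assertion, given any $0 \neq f \in A$ with $f = \sum_i c_i b_i$ relative to the basis $\mathbb{B}$, set $m = \MIN\{\v(b_i) \mid c_i \neq 0\}$. By the non-Archimedean inequality, $\v(f) \succeq m$. For the reverse, look at the image of $f$ in $F_{\v \succeq m}/F_{\v \succ m}$: it equals $\sum_{\v(b_i)=m} c_i \bar{b}_i$, which is nonzero since the relevant $\bar{b}_i$ are distinct basis elements of $\gr_\v(A)$, and this shows $f \notin F_{\v \succ m}$, i.e.\ $\v(f) = m$. I expect the main (mild) obstacle to be the bookkeeping in the linear-independence step—making precise why the lowest-value terms cannot cancel among themselves without their $\gr$-images cancelling—but this is exactly the standard ``leading term'' argument and is routine once the order $\succ$ and the passage to $\gr_\v(A)$ are set up carefully.
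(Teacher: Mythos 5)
Your overall architecture is right, and both the ``if'' direction (spanning from termination, linear independence from the fact that a nontrivial relation $\sum_i c_ib_i=0$ would force a nontrivial relation $\sum_{\v(b_i)=m}c_i\bar{b}_i=0$ among distinct elements of $\bar{\mathbb{B}}$ in degree $m=\MIN\{\v(b_i)\mid c_i\neq 0\}$) and the closing formula $\v(f)=\MIN\{\v(b_i)\mid c_i\neq 0\}$ are argued correctly.

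The one genuine problem is the termination argument in the ``only if'' direction. You justify termination by appealing to ``$S(A,\v)$ is maximum well-ordered,'' claiming this is assumed ``throughout this subsection.'' It is not: that standing assumption is made in the Khovanskii-basis subsection (``For the rest of this subsection we assume that $S(A,\v)$ is maximum well-ordered\dots''), and its scope ends there; the adapted-basis lemma is stated for a general quasivaluation with no such hypothesis, so as written your forward direction proves a weaker statement. Fortunately no well-orderedness is needed, and the repair is short: if $\mathbb{B}$ is a basis, write $f=\sum_{i=1}^{N}c_ib_i$ as its (unique, finite) expansion. By your own argument for the final formula, $a:=\v(f)=\MIN\{\v(b_i)\mid c_i\neq 0\}$ and the image of $f$ in $F_{\v\succeq a}/F_{\v\succ a}$ is $\sum_{\v(b_i)=a}c_i\bar{b}_i$. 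Since $\bar{\mathbb{B}}$ is a basis of $\gr_\v(A)$, the expression produced in step (2) of Algorithm \ref{algo-vec-space-subduct} is unique and therefore equals exactly this sum, so step (3) replaces $f$ by $\sum_{\v(b_i)\succ a}c_ib_i$, which has strictly fewer terms. Hence the loop runs at most $N$ times. This argument also disposes of the well-definedness worry at step (1), since the displayed formula shows the maximum in \eqref{qf} is attained for every $f$.
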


Many different vector space bases of $A$ can be adapted to the same quasivaluation $\v$. Any two such bases are related by a lower triangular change of coordinates. 
\begin{proposition}\label{lowertriangular}
Let $\v$ be a quasivaluation with one-dimensional leaves.
Let $\mathbb{B}$, $\mathbb{B}' \subset A$ be adapted to $\v$. Then every $b \in \mathbb{B}$ has a lower-triangular expression in the basis $\mathbb{B}'$, and vice versa:
$$b = cb' + \sum_{\v(b_i') \succ \v(b)} c_ib_i', \quad \v(b) = \v(b'),$$
with $c$ and the $c_i \in \k$ and $c \neq 0$.
\end{proposition}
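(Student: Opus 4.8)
\textbf{Proof proposal for Proposition \ref{lowertriangular}.}

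The plan is to run the vector space subduction algorithm (Algorithm \ref{algo-vec-space-subduct}) with respect to $\mathbb{B}'$, applied to a fixed element $b \in \mathbb{B}$, and to read off the claimed lower-triangular expression from the output. Since $\mathbb{B}'$ is adapted to $\v$, Lemma \ref{subductadapt} guarantees that this algorithm terminates after finitely many steps and produces $b = \sum_i c_i b_i'$ with $\v(b) = \MIN\{\v(b_i') \mid c_i \neq 0\}$; so every basis element $b_i'$ appearing has $\v(b_i') \succeq \v(b)$, which already gives the ``triangular'' shape. What remains is to show that there is exactly one index $i$ with $\v(b_i') = \v(b)$ and that its coefficient is nonzero.

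Here is where the one-dimensional leaves hypothesis enters. By Remark \ref{rem-adapted-basis-1dimleaves}, when $\v$ has one-dimensional leaves the map $b' \mapsto \v(b')$ is a bijection between $\mathbb{B}'$ and the value semigroup $S(A, \v)$; in particular there is a \emph{unique} element $b' \in \mathbb{B}'$ with $\v(b') = \v(b)$. Writing $a = \v(b)$, the quotient $F_{\v \succeq a}/F_{\v \succ a}$ is at most one-dimensional (indeed exactly one-dimensional since $a \in S(A,\v)$), and both $\bar b$ and $\bar{b'}$ are nonzero in it, hence $\bar b = c\, \bar{b'}$ for some $c \in \k \setminus \{0\}$. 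Therefore $b - c b' \in F_{\v \succ a}$, and since $\mathbb{B}'$ is adapted, $b - cb'$ is a $\k$-linear combination of those $b_i' \in \mathbb{B}'$ with $\v(b_i') \succ a$. This yields precisely the asserted expression
$$ b = c b' + \sum_{\v(b_i') \succ \v(b)} c_i b_i', \qquad \v(b) = \v(b'), $$
with $c \neq 0$. By symmetry (exchanging the roles of $\mathbb{B}$ and $\mathbb{B}'$) the same holds with the two bases interchanged.

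I do not anticipate a genuine obstacle here; the statement is essentially a bookkeeping consequence of Lemma \ref{subductadapt} together with Remark \ref{rem-adapted-basis-1dimleaves}. The only point that needs a little care is making sure the finitely-many lower terms $b_i'$ in the expansion of $b - cb'$ are themselves expressed in $\mathbb{B}'$ and not re-expanded recursively in a way that might not terminate — but this is handled uniformly by invoking that $\mathbb{B}'$ is a vector space basis adapted to $\v$, so the expansion of any element of $A$ (in particular of $b - cb' \in F_{\v \succ a}$) in terms of $\mathbb{B}'$ is automatic, and each occurring basis vector has value strictly larger than $a$ by the last clause of Lemma \ref{subductadapt}. (One can alternatively avoid the algorithm entirely: $\mathbb{B}'$ being a basis, write $b = \sum_i c_i b_i'$ directly, and then use the last clause of Lemma \ref{subductadapt} to see $\MIN\{\v(b_i') : c_i \neq 0\} = \v(b)$, uniqueness of the minimizing $b_i'$ coming from Remark \ref{rem-adapted-basis-1dimleaves}.)
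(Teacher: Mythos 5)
Your proof is correct and follows exactly the route the paper intends: the paper's own proof of Proposition \ref{lowertriangular} consists of the single line ``This follows from Lemma \ref{subductadapt},'' and your argument is precisely the fleshed-out version of that, combining the last clause of Lemma \ref{subductadapt} (the minimum of the values of the basis elements appearing in the expansion equals $\v(b)$) with the bijectivity of $b' \mapsto \v(b')$ from Remark \ref{rem-adapted-basis-1dimleaves} to isolate the unique leading term. The parenthetical alternative at the end of your write-up, which bypasses the algorithm and just uses that $\mathbb{B}'$ is a basis, is the cleanest formulation and is all that is needed.
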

\begin{proof}
This follows from Lemma \ref{subductadapt}. 
\end{proof}

\section{Valuation constructed from a weighting matrix}\label{sec-weightandsubductive}
In this section we introduce two classes of quasivaluations on an algebra $A$. First is the class of weight quasivaluations (Definition \ref{def-weight-quasival}). These are quasivaluations which are induced from a vector-valued weighting of indeterminates in a polynomial algebra $\k[\x]$ which presents $A$. When the weighting matrix lies in the Gr\"obner region, the corresponding weight quasivaluation possesses an adapted vector space basis (in the sense of Definition \ref{def-adapted-basis}).
We also describe the set of weight quasivaluations on $A$ as a piecewise linear object (Section \ref{subsec-set-weight-quasival}). 
The second class is what we call subductive valuations (Definition \ref{def-subductive-val}). These are valuations that have a finite Khovanskii basis and for which the subduction algorithm (Algorithm \ref{algo-subduction}) always terminates. One of the important results in this section is that every subductive valuation is a weight valuation (Section \ref{lem-subductiveweight}). 

\subsection{Quasivaluation constructed from a weighting matrix}  \label{subsec-weight-quasival}
We start by introducing the notions of filtration and quasivaluation constructed out of a weighting matrix (in fact, we already saw these notions in disguise in Section \ref{subsec-Khovanskii-basis} after Remark \ref{rem-I_A-toric-ideal}). Let $\pi: B \to A$ be a surjection of $\k$-algebras, and let $\mathcal{F} = \{F_a\}$ be an algebra filtration on $B$ by $\k$-vector spaces. The pushforward filtration $\pi_*(\mathcal{F})$ on $A$ is defined by the set of spaces  $\{\pi(F_a)\}$.   If $\v$ is a quasivaluation on $B$ with corresponding filtration $\mathcal{F}_\v$, we let $\pi_*(\v)$ be the pushforward quasivaluation on $A$ corresponding to the filtration $\pi_*(\mathcal{F}_{\v})$.   

Fix a group ordering $\succ$ on $\Q^r$.
Each matrix $\w \in \Q^{r \times n}$ defines a $\Q^r$-valued valuation $\tilde{\v}_\w: \k[\x] \setminus \{0\} \to \Q^r$ by the following rule. Let $p = \sum_\alpha c_\alpha \x^\alpha \in \k[\x]$. Define:
\begin{equation}   \label{equ-tilde-v}
\tilde{\v}_\w(p) = \MIN\{ M\alpha \mid c_{\alpha} \neq 0\}.
\end{equation}
Here $\MIN$ is computed with respect to $\succ$.
We denote the filtration on $\k[\x]$ corresponding to $\tilde{\v}_\w$ by $\mathcal{F}_\w$. Notice that the monomial basis of $\k[\x]$ is adapted to the filtration $\mathcal{F}_\w$, in particular $F_{\w, \succeq a}$ is the span of monomials 
$\x^{\alpha}$ with $M\alpha \succeq a$. 


\begin{definition}  \label{def-weight-quasival}
With notation as above, the {\it weight filtration} on 
$A$ associated to $\w \in \Q^{r \times n}$ is the pushforward filtration $\pi_*(\mathcal{F}_\w)$. We denote the corresponding quasivaluation on $A$ by $\v_\w$. We refer to $\v_\w$ as the {\it weight quasivaluation with weighting matrix $\w$}.
\end{definition}

\begin{lemma}   \label{lem-v_w-max-min}
For any $f \in A$ and $\w \in \Q^{r \times n}$, the quasivaluation $\v_\w(f)$ is computed as follows:
\begin{equation}   \label{equ-v_w}
\v_\w(f) = \pi_*(\tilde{\v}_\w)(f) = \MAX\{ \tilde{\v}_\w(\tilde{f}) \mid \tilde{f} \in \k[\x], ~ \pi(\tilde{f}) = f\}.
\end{equation}
Note that, as $\tilde{\v}_\w$ is defined by a minimum, the equation
\eqref{equ-v_w} is in fact a max-min formula. 
\end{lemma}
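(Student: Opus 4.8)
The plan is to unwind the definitions of pushforward filtration and associated quasivaluation. Recall that $\v_\w = \pi_*(\tilde\v_\w)$ is, by definition, the quasivaluation $\v_{\mathcal{F}}$ associated to the pushforward filtration $\mathcal{F} = \pi_*(\mathcal{F}_\w) = \{\pi(F_{\w,\succeq a})\}_{a \in \Q^r}$ via formula \eqref{qf}. So for $0 \neq f \in A$ we have
\begin{equation}
\v_\w(f) = \MAX\{ a \in \Q^r \mid f \in \pi(F_{\w, \succeq a})\}.
\end{equation}
The goal is to identify this with $\MAX\{\tilde\v_\w(\tilde f) \mid \tilde f \in \k[\x],\ \pi(\tilde f) = f\}$.

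The argument is a two-way inequality. First I would show that the right-hand max is $\succeq$ the left-hand max: if $f \in \pi(F_{\w,\succeq a})$ then there is some $\tilde f \in F_{\w, \succeq a}$ with $\pi(\tilde f) = f$, and $\tilde f \in F_{\w, \succeq a}$ means precisely $\tilde\v_\w(\tilde f) \succeq a$ (here one uses that $\mathcal{F}_\w$ is exactly the filtration of $\tilde\v_\w$ and the two constructions $\F_\v$, $\v_\F$ are mutually inverse for discrete $\v$, as noted in Section \ref{subsec-quasival-filtration}). Hence for that particular lift $\tilde f$ we get $\tilde\v_\w(\tilde f) \succeq a$, so the right-hand max over all lifts is $\succeq a$; taking $a$ to be the left-hand max gives one inequality. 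Conversely, given any lift $\tilde f$ with $\pi(\tilde f) = f$, set $a = \tilde\v_\w(\tilde f)$; then $\tilde f \in F_{\w, \succeq a}$, so $f = \pi(\tilde f) \in \pi(F_{\w, \succeq a})$, whence $\v_\w(f) \succeq a = \tilde\v_\w(\tilde f)$; taking the max over all lifts yields the reverse inequality.

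The only genuine subtlety — and the one point I would treat with slight care rather than asserting — is that the maxima are actually attained, i.e. that $\v_\w(f) \neq \infty$ and that there is a lift realizing it. For attainment one notes that $\tilde\v_\w$ is discrete (its value set is a finitely generated subsemigroup of the discrete group generated by the columns of $\w$ inside $\Q^r$), and the image $S$ of $\v_\w$ is a subset of the value set of $\tilde\v_\w$; combined with the observation in Section \ref{subsec-weight-quasival} that the monomial basis is adapted to $\mathcal{F}_\w$, one sees $\pi(F_{\w,\succeq a})$ is a nested family of subspaces whose union is $A$ and, for the relevant $f$, the set of $a$ with $f \in \pi(F_{\w,\succeq a})$ has a maximum. (As the paper remarks elsewhere, throughout we only consider $\w$ for which $\v_\w$ takes finite values.) Once attainment is in hand, the two inequalities above combine to give equality, and the final parenthetical remark that \eqref{equ-v_w} is a max–min formula is immediate from substituting the defining formula \eqref{equ-tilde-v} for $\tilde\v_\w(\tilde f) = \MIN\{M\alpha \mid c_\alpha \neq 0\}$ into the max over lifts. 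I expect no real obstacle here; this is essentially a bookkeeping lemma, and the main thing to get right is simply to cite the inverse relationship between filtrations and (quasi)valuations correctly so that membership in $F_{\w,\succeq a}$ translates cleanly to a statement about $\tilde\v_\w$.
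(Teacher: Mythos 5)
Your proposal is correct and matches the paper's intent: the paper states this lemma without proof, treating it as a direct unwinding of the definitions of the pushforward filtration and the quasivaluation $\v_\F$ from \eqref{qf}, which is exactly your two-inequality argument. Your care about attainment of the maxima is appropriate; note the paper handles this by imposing, immediately after the lemma, the standing assumption that $\w$ is chosen so that the maximum in \eqref{equ-v_w} is attained (e.g.\ $\w \in \textup{GR}^r(I)$), so no further argument is needed there.
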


Throughout the rest of the paper, we assume that the weighting matrix $M$ is chosen such that the maximum in \eqref{equ-v_w} is attained for all $0 \neq f \in A$. This is the case for example if $M$ is chosen from the Gr\"obner region (this follows from Proposition \ref{prop-st-monomial-adapted-basis} below) or from the rank $r$ tropical variety $\mathcal{T}^r(I)$ (see Proposition \ref{trop-max-obtained}). In the case that $M \in \textup{GR}^r(I) \subset \Q^{r \times n}$, the weight quasivaluation $\v_M$ can be computed using a standard monomial basis as follows.


\begin{proposition} \label{prop-st-monomial-adapted-basis}
With notation as above, let $\w \in \textup{GR}^r(I)$ and let $\mathbb{B} \subset A$ be the standard monomial basis for a monomial ordering $>$ with $\w \in C_>(I)$. Then $\mathbb{B}$ is adapted to $\v_\w$. 


\end{proposition}
\begin{proof} 
The inequality $\v_\w(f) \succeq \MIN\{\v_\w(b_\alpha) \mid c_\alpha \neq 0\}$  is immediate from the definition of a quasivaluation (Definition 
\ref{def-quasival}(1)). This implies $\v_\w(f) \succeq \MIN\{M\alpha \mid c_\alpha \neq 0\}$. We need to show that the equality holds. Let $\tilde{f} = \sum_\alpha c_\alpha \x^\alpha$ and let $m = \MIN\{M\alpha \mid c_\alpha \neq 0 \}$. Suppose by contradiction that there is $\tilde{h} = \sum_\beta c'_\beta \x^\beta \in \k[\x]$ such that $\pi(\tilde{h}) = f$ and moreover for every $\beta$ with $c'_\beta \neq 0$ we have $M\beta \succ m$. Let $p = \sum_\alpha c_\alpha \x^\alpha - \sum_\beta c'_\beta \x^\beta$. Then $p \in I$ and $\In_\w(p)$ consists only of standard monomials $c_\alpha \x^\alpha$, this is a contradiction.
\end{proof}

It follows from Lemma \ref{lem-negativeGregion} and Proposition \ref{prop-st-monomial-adapted-basis} that if $I$ is a homogeneous ideal with respect to a positive grading on $\k[\x]$ then any weight quasivaluation $\v_\w$ can be equipped with an adapted basis. From now on we denote the associated graded algebra of the weight quasivaluation $\v_\w$ by $\gr_\w(A)$. 
The following lemma describes the graded algebra $\gr_{\w}(A)$ in terms of the initial ideal $\In_\w(I)$ of $I \subset \k[\x]$. 


\begin{lemma}\label{lem-gr_v-in_w}
The associated graded algebra $\gr_\w(A)$ is isomorphic to $\k[\x]/\In_\w(I)$. 
\end{lemma}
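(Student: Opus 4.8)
The plan is to exhibit an explicit surjection $\k[\x]/\In_\w(I) \to \gr_\w(A)$ and show it is injective by a dimension/basis count, using the standard monomial basis that was constructed in Proposition \ref{prop-st-monomial-adapted-basis}. First I would fix a monomial ordering $>$ with $\w \in C_>(I)$ (possible since we assume $\w$ lies in $\textup{GR}^r(I)$), and let $\mathbb{B}$ be the corresponding standard monomial basis of $A$. By Proposition \ref{prop-st-monomial-adapted-basis} this $\mathbb{B}$ is adapted to $\v_\w$, and for $f = \sum_\alpha c_\alpha b_\alpha$ written in this basis we have $\v_\w(f) = \MIN\{M\alpha \mid c_\alpha \neq 0\}$. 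In particular, the images $\bar{b}_\alpha$ of the standard monomials in $\gr_\w(A)$ form a $\k$-vector space basis of $\gr_\w(A)$ (Remark \ref{rem-adapted-basis-1dimleaves} and Lemma \ref{subductadapt}).

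Next I would define the candidate isomorphism. The homomorphism $\pi: \k[\x] \to A$ carries the filtration $\mathcal{F}_\w$ on $\k[\x]$ to its pushforward on $A$, hence induces a homomorphism of associated graded algebras $\bar\pi: \gr_{\tilde\v_\w}(\k[\x]) \to \gr_\w(A)$. Since the monomials are an adapted basis for $\tilde\v_\w$ and the leaves are one-dimensional, $\gr_{\tilde\v_\w}(\k[\x])$ is canonically $\k[\x]$ itself (with the $\Q^r$-grading given by $\deg_\w(\x^\alpha) = M\alpha$); under this identification $\bar\pi$ sends $\x^\alpha$ to the class $\overline{b^\alpha}$ of $\pi(\x^\alpha)$ in $F_{\v_\w \succeq M\alpha}/F_{\v_\w \succ M\alpha}$. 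I would then check that $\bar\pi$ is surjective — this is essentially the content of the adaptedness statement, since every $\bar b_\alpha$ is in the image — and that $\In_\w(I) \subseteq \ker\bar\pi$: if $f \in I$ then $\pi(f) = 0$, so $\tilde\v_\w(f)$ is strictly larger than the $\w$-degree of $\In_\w(f)$ unless $\In_\w(f)$ maps into $F_{\v_\w \succ}$; more precisely each $\w$-homogeneous initial form $\In_\w(f)$ has $\pi(\In_\w(f)) \in F_{\v_\w \succ \tilde\v_\w(f)}$ (this is exactly the argument of Lemma \ref{lem-in_w-I_B}), so $\overline{\pi(\In_\w(f))} = 0$ in $\gr_\w(A)$. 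Hence $\bar\pi$ descends to a surjection $\k[\x]/\In_\w(I) \twoheadrightarrow \gr_\w(A)$.

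For injectivity I would compare vector-space bases on both sides graded piece by graded piece. On the target side, for each $a \in \Q^r$ the graded piece $F_{\v_\w \succeq a}/F_{\v_\w \succ a}$ has as basis the classes $\bar b_\alpha$ of standard monomials $\x^\alpha$ with $M\alpha = a$ — this follows from Proposition \ref{prop-st-monomial-adapted-basis}, since a standard monomial $\x^\alpha$ satisfies $\v_\w(b_\alpha) = M\alpha$, and distinct standard monomials of the same $\w$-degree give linearly independent classes (again by the $\v_\w(f) = \MIN\{M\alpha\}$ formula). On the source side, $\k[\x]/\In_\w(I)$ is $\w$-graded and the standard monomials for $>$ form a basis of it as well: by Lemma \ref{initialreduced} the reduced Gröbner basis of $\In_\w(I)$ with respect to $>$ is $\{\In_\w(g) \mid g \in G_>(I)\}$, and by Lemma \ref{Cinequalities} $\In_>(\In_\w(g)) = \In_>(g)$, so $\In_>(\In_\w(I)) = \In_>(I)$, whence the two ideals have the same standard monomials. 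Therefore $\bar\pi$ sends the standard-monomial basis of $\k[\x]/\In_\w(I)$ bijectively onto the standard-monomial basis of $\gr_\w(A)$, degree by degree, proving it is an isomorphism.

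The main obstacle I anticipate is the bookkeeping around the $\Q^r$-grading: making precise that $\gr_{\tilde\v_\w}(\k[\x]) \cong \k[\x]$ as a $\w$-graded ring and that $\bar\pi$ is well-defined and grading-preserving requires care, as does checking that $\pi(\In_\w(f))$ lies in the strictly-larger filtration piece (the content of Lemma \ref{lem-in_w-I_B}, which must be invoked cleanly). Everything else is a matching of bases, and the key inputs — Proposition \ref{prop-st-monomial-adapted-basis} for the target and Lemmas \ref{initialreduced}, \ref{Cinequalities} for the source — are already in place.
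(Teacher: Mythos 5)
Your argument is correct, but it is a genuinely different proof from the one in the paper. The paper's proof is purely formal: it identifies each pushforward graded piece $\pi(F_{\w,\succeq a})/\pi(F_{\w,\succ a})$ with $\bigl(F_{\w,\succeq a}/(F_{\w,\succeq a}\cap I)\bigr)/\bigl(F_{\w,\succ a}/(F_{\w,\succ a}\cap I)\bigr)$, identifies the degree-$a$ piece of $\In_\w(I)$ inside $\gr_{\tilde\v_\w}(\k[\x])\cong\k[\x]$ with $(F_{\w,\succeq a}\cap I)/(F_{\w,\succ a}\cap I)$ (this is where Lemma \ref{initialformwrite} enters), and then concludes from the linear-algebra identity $(V/U)/(W/W\cap U)\cong V/(W+U)$. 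No monomial ordering or standard monomial basis appears, so the paper's proof works under the standing hypothesis only that the maximum in \eqref{equ-v_w} is attained. Your proof instead builds the explicit surjection $\bar\pi$ and certifies injectivity by matching standard monomial bases on both sides; this is sound, and it has the advantage of exhibiting the isomorphism concretely on standard monomials (exactly the observation the paper records in the paragraph following the lemma). The price is an extra hypothesis: you need $\w\in\textup{GR}^r(I)$ so that a compatible monomial ordering $>$ and the standard monomial basis of Proposition \ref{prop-st-monomial-adapted-basis} exist, which is strictly stronger than the lemma's stated setting (though it holds in all the applications in the paper, e.g.\ when $I$ is homogeneous for a positive grading or when $\w$ comes from a prime cone in the Gr\"obner region). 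If you want your argument to cover the lemma as stated, you would either need to add this hypothesis explicitly or fall back on the filtration-level argument for general $\w$.
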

\begin{proof}
Consider the filtration on $\k[\x]$ by the spaces $F_{\w, \succeq a}$ and the associated pushforward filtration $\pi(F_{\w, \succeq a})$. For any $a \in \Q^r$, the pushforward space $\pi(F_{\w, \succeq a})$ can be identified 
with $F_{\w, \succeq a}/ (F_{\w, \succeq a} \cap I)$.  As such, the associated graded algebra $\gr_\w(A)$ is a direct sum of the following $\k$-vector spaces:
$$(F_{\w, \succeq a}/ F_{\w, \succeq a} \cap I)/(F_{\w, \succ a}/ F_{\w, \succ a} \cap I).$$
Since $\In_\w(I)$ is homogeneous with respect to the $\w$-grading on $\k[\x]$, we can also think of $\k[\x]/\In_\w(I)$ as a $\Q^r$-graded algebra.   In particular, $\k[\x]$ is canonically isomorphic to the associated graded algebra $\gr_\w(\k[\x])$ $= \bigoplus_a F_{\w, \succeq a}/F_{\w, \succ a}$, where $F_{\w, \succeq a}/F_{\w, \succ a}$ is the vector space spanned by the images of monomials with $M$-degree $a$.  The image of $\In_\w(I)$ under this isomorphism is the direct sum of the spaces  $(F_{\w, \succeq a} \cap I)/(F_{\w, \succ a} \cap I)$.   Now the lemma follows from the following general fact about quotients from linear algebra. Let $W, U$ be subspaces of a vector space $V$, then:
$$(V/U)/(W / W \cap U) \cong (V/W)/(U / U \cap W) \cong V/ (W + U).$$ 
\end{proof}


Here is a simple example for illustration.
\begin{example} \label{ex-weight-filt}
Let $A = \k[x]$ be the polynomial algebra in one indeterminate $x$ and consider its presentation $\k[x] \cong \k[x,y] / I$ where $I = \langle x^2 - y \rangle$. Thus we have the surjective homomorphism $\pi: \k[x, y] \to \k[x]$ given by $\pi(x)=x$ and $\pi(y) = x^2$. With notation as above, let $r=1$ and consider the weight $\w = (1, 2) \in \Q^2$. Then $I$ is a homogeneous ideal with respect to the $\w$-grading. It is easy to verify that $\In_\w(x^2 - y) = x^2 - y$ and hence $\In_\w(I) = I$. Also the pushforward filtration on $A=\k[x]$ is just the grading by degree and thus $\gr_\w(\k[x]) = \k[x] \cong \k[x,y] / I$ as expected.

Next, let $\w = (1, 3)$. In this case, one can compute the pushforward filtration on $\k[x]$ as follows. For each $a \geq 0$ we have 
$\pi(F_{\w, \geq a}) = \span\{ x^m, x^{m+1}, \ldots \}$ where $m = \lceil 2a / 3 \rceil$. 
It follows that the $\gr_\w(\k[x])$ is the graded algebra whose $a$-th graded piece is $\k$ when $a \equiv 0, 1 \pmod{3}$ and is $0$ when $a \equiv 2 \pmod{3}$. One verifies that the quotient $\k[x, y] / \langle x^2 \rangle$ is indeed isomorphic to this algebra. The isomorphism is given by sending the image of $x$ (in $\k[x, y]/ \langle x^2 \rangle$) to a nonzero element in degree $1$ (in $\gr_\w(\k[x])$) and sending the image of $y$ to a nonzero element in degree $3$. We remark that since the initial ideal $\In_\w(I) = \langle x^2 \rangle$ is not prime, and thus the associated graded algebra $\gr_\w(\k[x])$ is not a domain, the quasivaluation $\v_\w$ is not a valuation. 
\end{example}

It may be that the Gr\"obner region $\textup{GR}^r(I)$ of an ideal presenting $A$ is not all of $\Q^{r\times n}$; we show that in this case the quasi-valuation $\v_M$ will still take finite values on $A \setminus \{0\}$ provided $M$ is chosen from $\mathcal{T}^r(I)$.  

\begin{proposition}\label{trop-max-obtained}
Let $I$ be prime and $M \in \mathcal{T}^r(I)$, then for every $0 \neq f \in \k[\x]/I$, $\v_M(f) < \infty$. 
\end{proposition}

\begin{proof}
 For $M \in \Q^{r \times n}$, let $I_M$ be the set of $f \in \k[\x]$ such that for every $a \in \Q^r$ there exists a $g \in I$ such that $\tilde{\v}_M(f + g) > a$. It is straightforward
 to check that $I_M$ is an ideal containing $I$.  Furthermore, $I_M$ is strictly larger than $I$ if and only if $\v_M(f) = \infty$ for some $0 \neq f \in A$.  First we show that if $M \in \mathcal{T}^r(I)$, we must also have $M \in \mathcal{T}^r(I_M)$.  Suppose $f \in I_M$ and $\In_M(f) = C_{\alpha}\x^\alpha$, then it follows that $\tilde{\v}_M(f) = \tilde{\v}_M(C_{\alpha}\x^\alpha) = a$.  We must have $g \in I$ such that $\tilde{\v}_M(f + g) > a$, but for this to be the case we must have $\tilde{\v}_M(g) = a$ and $\In_M(g) = C_{\alpha}\x^\alpha$ which contradicts the fact that $M \in \mathcal{T}^r(I)$.  
 
 Now we consider the filtration $\mathcal{F}_M$ on $\k[\x]$ and its pushforwards $F$ and $F'$ on $\k[\x]/I$ and $\k[\x]/I_M$, respectively.  Observe that $F_{\succeq a} = \mathcal{F}_{M, \succeq a}/ \mathcal{F}_{M, \succeq a}\cap I$ and $F'_{\succeq a} = \mathcal{F}_{M, \succeq a}/ \mathcal{F}_{M, \succeq a}\cap I_M$ so there is a natural surjection: $F_{\succeq a} \to F'_{\succeq a} \to 0$.  The kernel of this surjection is $\mathcal{F}_{M, \succeq a} \cap I_M / \mathcal{F}_{M, \succeq a} \cap I$; we claim that this is the quotient $I_M/I$.  To see this, let $f \in I_M$ with $\tilde{\v}_M(f) = a < b$, then there is a $g \in I$ with $\tilde{\v}_M(f + g) > b$.  It must follow that $\tilde{\v}_M(g) = a$ and that the equivalence class $\bar{f} \in \mathcal{F}_{M, \succeq a} \cap I_M / \mathcal{F}_{M, \succeq a} \cap I$  lies in the subspace $\mathcal{F}_{M, \succeq b} \cap I_M / \mathcal{F}_{M, \succeq b} \cap I$.  Since $b$ was arbitrary, we conclude that  $\mathcal{F}_{M, \succeq a} \cap I_M / \mathcal{F}_{M, \succeq a} \cap I = I_M/I$.  But this then implies that $F_{\succeq a}/ F_{\succ a} \cong F'_{\succeq a}/F'_{\succ a}$, and $\gr_F(\k[\x]/I) \cong \gr_{F'}(\k[\x]/I_M)$.  
 
 Now by Lemma \ref{lem-gr_v-in_w} we have that $\k[\x]/\In_M(I) \cong \gr_F(\k[\x]/I)$ and $\k[\x]/\In_M(I_M) \cong \gr_{F'}(\k[\x]/I_M)$.  As $M$ is in the tropical varieties of both $I$ and $I_M$, we conclude that $\k[\x]/I$ and $\k[\x]/I_M$ have the same Krull dimension.  But $I \subset I_M$ and $I$ is prime, so $I = I_M$. 
  
\end{proof}

\subsection{The set of weight quasivaluations}   \label{subsec-set-weight-quasival}
In this section we describe the set of weight quasivaluations on $A$ coming from a given presentation.  
Let $\mathcal{V}_\B$ denote the set of all weight quasivaluations $\v_\w$ on $A$ for $\w \in \Q^{r \times n}$. Define the function $\T_\B: \V_\B \to \Q^{r \times n}$ as follows. For each $\v_M \in \V_\B$ let:
$$\T_\B(\v_\w) = (\v_\w(b_1), \ldots, \v_\w(b_n)).$$
The value $\v_{\w}(b_i)$ is not necessarily the $i$-th column of $\w$. In fact, by Lemma \ref{lem-v_w-max-min}, for each $i$, $\v_\w(b_i)$ is given by the max-min formula: 
\begin{equation}  \label{equ-v_w-b_i}
\v_\w(b_i) = \MAX\{ \MIN \{M\alpha \mid c_\alpha \neq 0 \} \mid x_i - \sum_\alpha c_\alpha \x^\alpha \in I \}.
\end{equation}
We remark that the map $\T_\B$ is an extension of the usual tropicalization map in tropical geometry to the set of weight quasivaluations. We also define a {\it contraction map} $\iota: \Q^{r \times n} \to \Q^{r \times n}$ by:
$$\iota(\w) = \T_\B(\v_\w)  = (\v_\w(b_1), \ldots, \v_\w(b_n)),$$
for every $\w \in \Q^{r \times n}$. 
From \eqref{equ-v_w-b_i} we see that $\iota$ is a piecewise linear map. 

The purpose of this section is to prove the proposition below.
\begin{proposition}  \label{prop-set-quasival}
\begin{itemize} We have the following:
\item[(1)] $\v_\w = \v_{\iota(\w)}$, $\forall \w \in \Q^{r \times n}$. 
\item[(2)] $\iota(\iota(\w)) = \iota(\w)$, $\forall \w \in \Q^{r \times n}$.
\item[(3)] For $\w, \w' \in \Q^{r \times n}$, the equality $\v_\w = \v_{\w'}$ holds if and only if $\iota(\w) = \iota(\w')$. 
\item[(4)] If $\w$ is contained in the tropical variety $\T^r(I) \subset \Q^{r \times n}$, namely those weights for which $\In_\w(I)$ contains no monomial, then $\iota(\w) = \w$.
\end{itemize}
\end{proposition}

\begin{proof}
Let $\w \in \Q^{r \times n}$ and let $w_1, \ldots, w_n$ (respectively $w'_1, \ldots, w'_n$) denote the column vectors of $M$ (respectively the column vectors of $\iota(M)$). Then for all $1 \leq i \leq n$ we have $w'_i \succeq w_i$, and $w'_i \succ w_i$ if and only if $x_i \in \In_\w(I)$, this proves part (4) and shows that $\v_{\iota(\w)}(f) \succeq \v_\w(f)$ for any $f \in A$ and $F_{\w, \succeq a} \subseteq F_{\iota(\w), \succeq a}$ for all $a \in \Q^r$.   Furthermore, parts (2) and (3) are straightforward corollaries of (1).  To prove $(1)$, let $f \in F_{\iota(\w), \succeq a}$, and $\sum_\alpha c_\alpha \x^\alpha = p(\x) \in \k[\x]$ be a polynomial with $\pi(p(\x)) = f$ and $\iota(\w)\alpha \succeq a$ for all $c_\alpha \neq 0$.  Since $\v_\w$ is a quasivaluation, we must have $\v_\w(f) \succeq MIN\{\v_\w(\pi(\x^\alpha)) \mid c_\alpha \neq 0\}$ and  $\v_\w(\pi(x^\alpha)) \succeq (\v_\w(\pi(x_1)), \dots, \v_\w(\pi(x_n)))\alpha = \iota(\w)\alpha$. By the definition of $\iota(M)$ we get $\v_\w(f) \succeq MIN\{\iota(\w)\alpha \mid c_\alpha \neq 0\} \succeq a$, and $F_{\iota(\w), \succeq a} \subseteq F_{\w, \succeq a}$. 
\end{proof}

\subsection{Subductive valuations and proof of Lemma \ref{lem-intro-subductive-weight-val}}\label{lem-subductiveweight} 


A valuation $\v$ on an algebra $A$ with generating set $\B$ is a weight valuation for $M \in \Q^{r\times n}$ if and only if for every $f \in A$ there is a $p(\x) \in \k[\x]$ such that $p(b_1, \ldots, b_n) = f$ and $\In_M(p(\x)) = \sum C_{\alpha}\x^\alpha$ where $M\alpha = a = \v(f)$ for all $C_{\alpha} \neq 0$.  One way to ensure this condition holds is to take $\v$ to be a subductive valuation. 

\begin{definition}(Subductive valuation) \label{def-subductive-val}
A valuation $\v: A \setminus \{0\} \to \Q^r$ is said to be a {\it subductive valuation} if there is a finite Khovanskii basis $\mathcal{B} \subset A$ for $\v$ such that the subduction algorithm (Algorithm \ref{algo-subduction}) always terminates in finite time for any $f \in A$. 
\end{definition}


Now we give a proof of Lemma  \ref{lem-intro-subductive-weight-val} from the introduction. Recall the statements:
\begin{enumerate}
\item[(1)] $\v$ is a subductive valuation with respect to $\B \subset A$,
\item[(2)] $\v$ has an adapted basis $\mathbb{B}$ consisting of monomials in $\B$,
\item[(3)] $\v$ coincides with the weight valuation $\v_M$ for the matrix $M \in \Q^{r\times n}$ with column vectors $\v(b_1), \ldots, \v(b_n)$,
\item[(4)] $\v$ has Khovanskii basis $\B$.  
\end{enumerate} 
\noindent
We show these satisfy $(1) \Rightarrow (2) \Rightarrow (3) \Rightarrow (4)$.

\begin{proof}
$(1) \Rightarrow (2)$. Let $\bar{b} \in \gr_\v(A)$ be the equivalence class of $b \in \B$. For each $a$ we choose a maximal linearly independent set of monomials $\bar{b}^\alpha \in \gr_\v(A)$, where $b^\alpha$ has homogeneous degree $a$. Since $\bar{b}^\alpha$ are linearly independent, so are the $b^{\alpha}$. As a consequence, if $\v$ is subductive, any $f \in A$ can be written as a linear combination of these elements in a unique way.  
$(2) \Rightarrow (3)$. If $\mathbb{B} \subset A$ is an adapted basis of $\B$-monomials, then by assumption any $f \in A$ can be written as $p(b_1, \ldots, b_n)$ with the property that the highest monomials all have valuation equal to $\v(f)$. This implies that $\v = \v_M$.  Now $(3) \Rightarrow (4)$ because any weight valuation for $\B \subset A$ has finite Khovanskii basis $\B$. 
\end{proof}

By Example \ref{ex-graded-alg-max-well-ordered} we see that for any algebra $A$ graded by an abelian group $\Gamma$ we have that $(4) \Rightarrow (1)$ so that the above conditions are equivalent; this is also the case if the value semigroup $S(A, \v)$ is well-ordered.  In particular, (1)-(4) are equivalent if $A$ is positively graded and $\B \subset A$ consists of homogeneous elements (possibly of different degrees).

\section{Valuations from prime cones}\label{sec-val-from-prime-cone}
Recall that $A$ is a finitely generated algebra and domain and $\B$ is a finite set of algebra generators for $A$ giving rise to a presentation $A \cong \k[\x] / I$. 
This section concerns the proof of one of the main results of the paper (Theorem \ref{th-intro-main2} from the introduction). {First we describe the construction of a valuation on $A$ from a prime cone $C \subset \T(I)$ such that $\B$ is a finite Khovanskii basis for this valuation (see below for the definition of a prime cone). 
Moreover, we show that if $C \subset \GR(I)$ this valuation has an adapted basis (Definition \ref{def-adapted-basis}). When $A$ is positively graded and $\B$ consists of homogeneous elements, the valuation corresponding to $C$ is subductive (Definition \ref{def-subductive-val}).}

Let $I \subset \k[\x]$ be a prime ideal and let $C \subset \T(I)$ be an open cone in the tropical variety of $I$ such that for any $\u_1, \u_2 \in C$ we have $\In_{\u_1}(I) = \In_{\u_2}(I)$. For example, this is the case if $C$ is chosen from the Gr\"obner fan of the homogenization $I_h$ of $I$. Recall that this common initial ideal is denoted by $\In_C(I)$

\begin{definition}   \label{def-prime-cone}
Let $C \subset \mathcal{T}(I)$ be an open cone. We call $C$ a {\it prime cone} if the corresponding initial ideal $\In_C(I)$ is a prime ideal.\footnote{As mentioned in the introduction, by abuse of terminology, we may occasionally refer to a closed cone as prime, in which case we mean that its relative interior is prime.}
\end{definition}

Take a finite subset ${\bf u} = \{\u_1, \ldots, \u_r\} \subset C$. We denote the $r \times n$ matrix whose $j$-th row is $\u_j$ by $M$ and regard it as a $\Q^r$-weighting matrix on $\k[\x]$, where $\Q^r$ is given the standard lexicographic ordering. We denote the $i$-th column of $M$ by $w_i \in \Q^r$.  

\begin{proposition}\label{prop-val-from-cone}
{Let $C$ be a prime cone}, we have the following:
\begin{itemize}
\item[(1)] The weight quasivaluation $\v_\w$ is in fact a valuation with rank equal to $\rank(M)$.
\item[(2)] The associated graded algebra $\gr_\w(A)$ is isomorphic to $\k[\x] / \In_C(I)$.
\item[(3)] The value semigroup $S(A, \v_\w)$ is generated by the column vectors of $M$, which are in fact the vectors $\v_M(b_1)$, \ldots, $\v_M(b_n)$. Consequently, the Newton-Okounkov cone $P(A, \v_\w)$ is the cone generated by these column vectors.
\item[(4)] If the cone $C$ has maximal dimension $d = \dim(A)$ and 
the linear span of the set ${\bf u}$ is also $d$-dimensional then the valuation $\v_\w$ has rank $d$. If, in addition, we assume that $\k$ is algebraically closed then $\v_\w$ is a valuation with one-dimensional leaves. 
\end{itemize}
\end{proposition}

\begin{remark}  \label{rem-pos-graded-Grobner-region}
{We note that if $A$ is positively graded and we choose a set of homogeneous generators for $A$, any prime cone lies in the Gr\"obner region.}
\end{remark}

\begin{proof}[Proof of Proposition \ref{prop-val-from-cone}]
By Lemma \ref{lem-in_w-in_u_i} we have 
$\In_\w(I) = \In_{\u_r}( \cdots (\In_{\u_1}(I)) \cdots )$. Since by assumption $\In_{\u_1}(I) = \cdots = \In_{\u_r}(I) = \In_C(I)$ we conclude that $\In_\w(I) = \In_C(I)$ which is assumed to be a prime ideal. On the other hand, by Lemma \ref{lem-gr_v-in_w} we know that $\gr_{\v_\w}(A) \cong \k[\x] / \In_\w(I)$. Since the quotient $\k[\x]/\In_\w(I)$ is a domain we see from Proposition \ref{trop-max-obtained} that $\v_\w$ is indeed a valuation. Part (2) now follows from Part (1). To prove Part (3) we note that by (2) the image of $\B$ in $\gr_M(A)$ is an algebra generating set and hence $\B$ is a finite Khovanskii basis. Lemma \ref{lem-Khov-basis-value-semigp} then implies the claim. Part (4) follows from (1) and Theorem \ref{th-full-rank-1-dim-leaves}. 
\end{proof}

\begin{proposition}   \label{prop-v_w-v_u_i}
With notation as above, for $\u_i \in {\bf u}$ 
let $\v_{\u_i}: A \setminus \{0\} \to \Q$ denote the corresponding rank $1$ valuation. Then for any $b_j \in \B$ we have $\v_{\u_i}(b_j) = (\u_i)_j$, the $j$-th coordinate of the vector $\u_i \in \Q^n$.
\end{proposition}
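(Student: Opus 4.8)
The plan is to read off $\v_{\u_i}(b_j)$ directly from the max-min description of a weight quasivaluation on generators. First I would record the standing facts that make the statement meaningful: since $\u_i$ lies in the prime cone $C \subset \GR(I)$, the initial ideal $\In_{\u_i}(I) = \In_C(I)$ is prime and monomial-free, so by Lemma \ref{lem-gr_v-in_w} the quasivaluation $\v_{\u_i}$ is a genuine rank $1$ valuation, and because $\u_i \in \GR(I)$ the maximum in \eqref{equ-v_w-b_i} (equivalently Lemma \ref{lem-v_w-max-min}) is attained. One inequality is then immediate: using the lift $x_j$ of $b_j$ in Lemma \ref{lem-v_w-max-min} gives $\tilde{\v}_{\u_i}(x_j) = (\u_i)_j$, so $\v_{\u_i}(b_j) \succeq (\u_i)_j$.

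For the reverse inequality I would argue by contradiction. If $\v_{\u_i}(b_j) \succ (\u_i)_j$, then $(\u_i)_j$ is not an upper bound for the set of values $\tilde{\v}_{\u_i}(\tilde f)$ over lifts $\tilde f$ of $b_j$, so there is a lift $\tilde h = \sum_\alpha c_\alpha \x^\alpha$ of $b_j$ every monomial of which has $\u_i$-weight strictly $\succ (\u_i)_j$; in particular $x_j$ does not occur in $\tilde h$. Then $p := x_j - \tilde h \in I$ has $x_j$ as its unique term of minimal $\u_i$-weight, so $\In_{\u_i}(p) = x_j$, whence $x_j \in \In_{\u_i}(I) = \In_C(I)$. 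This contradicts $\u_i \in C \subset \T(I)$ together with Definition \ref{def-trop-var}, which says the initial ideal at a point of the tropical variety contains no monomial. Hence $\v_{\u_i}(b_j) = (\u_i)_j$.

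I do not expect a genuine obstacle; the argument is essentially an unwinding of definitions, and the only points deserving a word of care --- that $\v_{\u_i}$ is a valuation and that the maximum in \eqref{equ-v_w-b_i} is attained --- are already guaranteed by $C$ being a prime cone inside $\GR(I)$. I would also point out that the proposition is precisely the $r=1$ instance of Proposition \ref{prop-set-quasival}(4): since $\u_i \in C \subset \T(I) = \T^1(I)$ we get $\iota(\u_i) = \u_i$, and by definition of the contraction map $\iota(\u_i) = (\v_{\u_i}(b_1), \ldots, \v_{\u_i}(b_n))$, which is exactly the asserted equality. So, if preferred, the statement follows in one line from Proposition \ref{prop-set-quasival}(4) and Lemma \ref{lem-contractcalc}.
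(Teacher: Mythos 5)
Your proposal is correct, and its closing observation --- that the statement is exactly the $r=1$ case of Proposition \ref{prop-set-quasival}(4) applied to $\u_i \in C \subset \T(I)$ --- is precisely the paper's proof, which simply cites that proposition. The direct argument in your main body is also sound, but it is just an unwinding of Lemma \ref{lem-contractcalc} (the monomial $x_j$ lands in $\In_{\u_i}(I)$ exactly when the contraction strictly increases the $j$-th coordinate, which is ruled out on the tropical variety), so the two routes are essentially the same.
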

\begin{proof}
Follows from Proposition \ref{prop-set-quasival}.
\end{proof}

\begin{remark}
(1) If we assume that $C$ is taken from the Gr\"obner fan of the homogenization $I_h$ of $I$, or if $I$ is itself homogeneous, then some of the $\u_i$ may be taken from faces of the closure of $C$, provided that the sum $\sum \u_i$ is in $C$. To show this, note that by Proposition \ref{midpoint} we can use the above argument (in the proof of Proposition \ref{prop-val-from-cone}) with $\u = \u_1 + \cdots + \u_r$ in place of $\u_1$.

(2) The proof of Proposition \ref{prop-val-from-cone} can also be used in the case that $A$ is the coordinate ring of a very affine variety, i.e. when $A$ is presented as a quotient of a Laurent polynomial algebra. 
\end{remark}

The next proposition shows that the value semigroup $S(A, \v_\w)$, up to linear isomorphism, depends only on the cone $C$. Before we state this result, let us define what we mean by linear isomorphism of subsets of vector spaces. Let $S \subset \Q^r$ and $S' \subset \Q^{r'}$ be two subsets. We say that $S$ is {\it linearly isomorphic} to $S'$ if there exists a $\Q$-linear map $T: \Q^r \to \Q^{r'}$ such that $T$ restricts to a bijection between $S$ and $S'$.

\begin{proposition}   \label{prop-S-lin-iso}
Suppose the span of ${\bf u}$ has dimension $\dim(C)$. Then, up to linear isomorphism, the semigroup $S(A, \v_\w)$ (and hence the cone $P(A, \v_\w)$) depends only on $C$.
\end{proposition}
\begin{proof}
Let ${\bf u} = \{\u_1, \ldots, \u_r\}$, ${\bf u}' = \{\u'_1, \ldots, \u'_s \} \subset C$ be two subsets with corresponding weighting matrices $\w \in \Q^{r \times n}$, $\w' \in \Q^{s \times  n}$ respectively. By assumption the spans of ${\bf u}$ and ${\bf u}'$ are the same. Hence every row of $M$ is a linear combination of the rows of $M'$. It follows that 
the $\w$-degree of two monomials $\x^\alpha$ and $\x^\beta$ are the same if and only if their $\w'$-degrees are the same. Now consider $R = \gr_\w(A)$ and $R' = \gr_{\w'}(A)$. The algebras $R$ and $R'$ are graded by $S = S(A, \v_\w)$ and $S' = S(A, \v_{\w'})$ respectively. By Proposition \ref{prop-val-from-cone}(2) we know that $R$ and $R'$ are isomorphic (because they are both isomorphic to $\k[\x] / \In_C(I)$). Moreover, by what we said above, the isomorphism sends each graded component $R_a$, $a \in S$ to another graded component $R_{a'}$, $a' \in S'$. One verifies that the map $a \mapsto a'$ gives a linear isomorphism between $S$ and $S'$.  
\end{proof}

Finally, let us assume that the algebra $A$ is positively graded, i.e. $A = \bigoplus_{i \geq 0} A_i$, and the algebra generating set $\B$ consists of homogeneous elements of degree $1$. It follows that $I$ is a homogeneous ideal and moreover the vector $(-1, \ldots, -1)$ belongs to the lineality space of $I$ and hence lies in every cone in the Gr\"obner fan of $I$. Thus, we can take the vector $\u_1 \in {\bf u}$ to be $(-1, \ldots, -1)$. In this case, one observes that the valuation $\v_{\w}$ constructed above is such that for every $0 \neq f \in A$, the first component of $\v_\w(f)$ is $-\deg(f)$ (after dropping the minus sign, the valuation $\v_M$ is of the form \eqref{equ-intro-v-degree}). 
In particular, $\v_\w$ is a subductive valuation (see Example \ref{ex-graded-alg-max-well-ordered}(1)). Moreover, Proposition \ref{primeconeadapted} below shows it has an adapted basis. The following is an immediate corollary of Proposition \ref{prop-val-from-cone}(3) and Proposition \ref{prop-S-lin-iso}.
\begin{corollary}   \label{cor-cone-NO-body-from-M_u}
With notation as above we have the following:
\begin{itemize}
\item[(1)] The Newton-Okounkov body $\Delta(A, \v_{\w})$ is the convex hull of the column vectors of $M$ (recall that the $i$-th column vector of $M$ coincides with $\v_{\w}(b_i)$).
\item[(2)] Up to linear isomorphism, the convex body $\Delta(A, \v_\w)$ depends only on $C$.
\end{itemize}
\end{corollary}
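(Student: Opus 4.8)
The plan is to read off both statements from the description of the value semigroup in Proposition~\ref{prop-val-from-cone}(3) together with the rigidity result Proposition~\ref{prop-S-lin-iso}, using throughout that in the present setting (positively graded $A$, degree-$1$ generators $\B$, and $\u_1 = (-1,\dots,-1)$) every column vector $w_i = \v_{\w}(b_i)$ of $\w$ has first coordinate $-1$.

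For part (1), I would first record that, as observed just before the statement, the first component of $\v_{\w}(f)$ equals $-\deg(f)$ for every $0 \neq f \in A$; hence each $w_i$ lies in the affine hyperplane $H = \{x \in \R^r : x_1 = -1\}$. By Proposition~\ref{prop-val-from-cone}(3) the Newton--Okounkov cone $P(A,\v_{\w})$ is the cone generated by $w_1,\dots,w_n$, and by Definition~\ref{def-NO-body} the body $\Delta(A,\v_{\w})$ is the slice $P(A,\v_{\w}) \cap H$ (identifying $H$ with $\R^{r-1}$ after the sign flip of \eqref{equ-intro-v-degree}). It then remains only to observe that if $v = \sum_i \lambda_i w_i$ with all $\lambda_i \ge 0$, then the first coordinate of $v$ is $-\sum_i \lambda_i$, so $v \in H$ exactly when $\sum_i \lambda_i = 1$; therefore $P(A,\v_{\w}) \cap H = \conv\{w_1,\dots,w_n\}$, which is the claim.

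For part (2), I would unwind the linear isomorphism furnished by the proof of Proposition~\ref{prop-S-lin-iso}. Given two subsets ${\bf u}, {\bf u}' \subset C$ whose spans both have dimension $\dim(C)$, with weighting matrices $\w$ and $\w'$, that proof identifies both $\gr_{\w}(A)$ and $\gr_{\w'}(A)$ with $\k[\x]/\In_C(I)$ (Proposition~\ref{prop-val-from-cone}(2)), where by Lemma~\ref{lem-gr_v-in_w} each identification carries the image of $b_i$ to $\bar x_i$. The element $\bar x_i \in \k[\x]/\In_C(I)$ is homogeneous of $\w$-degree $w_i$ and of $\w'$-degree $w_i'$, and the two gradings are related by a single $\Q$-linear map $T$ with $T(w_i) = w_i'$ for all $i$; this $T$ is precisely the linear isomorphism $S(A,\v_{\w}) \to S(A,\v_{\w'})$ of Proposition~\ref{prop-S-lin-iso}. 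Being linear and fixing $0$, $T$ maps $P(A,\v_{\w}) = \textup{cone}(w_1,\dots,w_n)$ onto $P(A,\v_{\w'})$ and, commuting with convex combinations, maps $\conv\{w_1,\dots,w_n\}$ onto $\conv\{w_1',\dots,w_n'\}$; by part~(1) this says exactly that $\Delta(A,\v_{\w})$ and $\Delta(A,\v_{\w'})$ are linearly isomorphic, so $\Delta(A,\v_{\w})$ depends only on $C$.

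The routine parts above are genuinely routine; the one point deserving care is the bookkeeping in part~(2), namely checking that the abstract semigroup isomorphism of Proposition~\ref{prop-S-lin-iso} is realized by a linear map sending $w_i$ to $w_i'$ (equivalently, one respecting the degree coordinate), rather than being merely an isomorphism of abstract semigroups. This is guaranteed by using the explicit identification of Lemma~\ref{lem-gr_v-in_w}, under which $\bar b_i \mapsto \bar x_i$ on both sides; with that in hand nothing further is needed beyond the two cited propositions.
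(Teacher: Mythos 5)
Your proof is correct and follows exactly the route the paper intends: the paper states this as an immediate corollary of Proposition \ref{prop-val-from-cone}(3) and Proposition \ref{prop-S-lin-iso}, and you have simply filled in the routine details (slicing the cone generated by the columns at degree $1$, and checking that the linear isomorphism of Proposition \ref{prop-S-lin-iso} sends $w_i$ to $w_i'$ and hence one convex hull onto the other). No gaps.
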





Back to the general case, where $A$ is not necessarily positively graded, when $C$ lies in $\GR(I)$ we can find a monomial ordering on $\k[\x]$ such that the cone $C$ is a face of a maximal cone $C_>$ in the Gr\"obner region $\GR(I)$. Let $\mathbb{B} \subset \k[\x] / I \cong A$ be a standard monomial basis with respect to $>$. As before, we take a subset ${\bf u} = \{\u_1, \ldots, \u_r\} \subset C$ and let $M$ be the $r \times n$ matrix matrix whose $j$-th row is $\u_j$, for all $j$. As usual we regard $M$ as a $\Q^r$-weighting matrix on $\k[\x]$, where $\Q^r$ is given the standard lexicographic ordering. 

\begin{proposition}\label{primeconeadapted}
The standard monomial basis $\mathbb{B}$ is adapted to $\v_\w$.  Moreover, we have $\In_>(\In_\w(I)) = \In_>(I)$, that is, $\w \in \textup{GR}^r(I)$ (see Definition \ref{def-GR-r}). 
\end{proposition}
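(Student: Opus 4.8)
The plan is to deduce both assertions from Proposition~\ref{prop-st-monomial-adapted-basis}. Indeed, once we know that the weighting matrix $\w$ lies in $C^r_>(I)$ for the monomial order $>$ fixed above (the one for which $C$ is a face of the maximal Gr\"obner cone $C_>$), that proposition immediately yields that the standard monomial basis $\mathbb{B}$ attached to $>$ is adapted to $\v_\w$; and by Definition~\ref{def-GR-r} the condition $\w \in C^r_>(I) \subseteq \textup{GR}^r(I)$ is exactly the equality $\In_>(\In_\w(I)) = \In_>(I)$ that the ``moreover'' clause requests. So everything reduces to verifying $\w \in C^r_>(I)$, i.e. to establishing $\In_>(\In_\w(I)) = \In_>(I)$.

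First I would recall, as in the proof of Proposition~\ref{prop-val-from-cone}, that since every row $\u_j$ of $\w$ lies in the open cone $C$ on which the initial ideal is constant equal to $\In_C(I)$, Lemma~\ref{lem-in_w-in_u_i} together with this constancy gives $\In_\w(I) = \In_{\u_r}(\cdots \In_{\u_1}(I)\cdots) = \In_C(I)$. On the other hand, since $C$ is a face of $C_>$ we have $\u_1 \in C \subseteq C_>$, so by the defining property of the Gr\"obner cone $C_>$ (equivalently, by the $r=1$ case of Lemma~\ref{Cinequalities}) we get $\In_>(\In_{\u_1}(I)) = \In_>(I)$. Since also $\In_{\u_1}(I) = \In_C(I) = \In_\w(I)$, this yields $\In_>(\In_\w(I)) = \In_>(I)$, as wanted, and the proposition follows.

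I do not expect a real obstacle here: once the identity $\In_\w(I) = \In_C(I)$ is available (it is already part of Proposition~\ref{prop-val-from-cone}), the argument is just unwinding the definitions of $C_>$, of $C^r_>(I)$, and of an adapted basis. The point that deserves the most care is the bookkeeping between the rank-$r$ weighting $\w$ and its rows $\u_1, \ldots, \u_r$, and between the three ideals $\In_\w(I)$, $\In_{\u_j}(I)$ and $\In_C(I)$; one should also check that the hypothesis ``$C \subset \GR(I)$'' is used correctly, since it is precisely what supplies the order $>$ with $C$ a face of $C_>$. If one prefers to avoid citing $\In_\w(I) = \In_C(I)$, an equivalent route is to argue directly from the reduced Gr\"obner basis $G_>(I)$: for $g \in G_>(I)$, membership $\u_j \in C_>$ forces the $>$-leading exponent of $g$ to minimize $\u_j \cdot \alpha$ over $\supp(g)$ for each $j$, hence to minimize $\w\alpha$ lexicographically, so $\In_>(\In_\w(g)) = \In_>(g)$; Lemma~\ref{Cinequalities} then gives $\w \in C^r_>(I)$.
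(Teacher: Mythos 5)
Your proposal is correct and follows essentially the same route as the paper: both arguments reduce the rank-$r$ claim to the rank-$1$ fact that some vector of $C$ lies in $C_>(I)$, using Lemma \ref{lem-in_w-in_u_i} to identify $\In_\w(I)$ with $\In_C(I)$ and then invoking Proposition \ref{prop-st-monomial-adapted-basis}. The only cosmetic difference is that the paper passes through Proposition \ref{midpoint} and the representative $\u_1+\cdots+\u_r \in C$, while you use $\u_1$ directly together with the constancy of the initial ideal on $C$.
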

\begin{proof}
By  Lemma \ref{lem-in_w-in_u_i} we have $\In_{\w}(I) = \In_{\u_1}(\ldots \In_{\u_r}(I)\ldots)$.  Since ${\bf u} \subset C \subset \textup{GR}(I)$, by Proposition \ref{midpoint}, we have $\In_\w(I) =  \In_{\u_1+\cdots+\u_r}(I)$. It follows that $\w \in \textup{GR}^r(I)$ and $\In_>(\In_\w(I)) = \In_>(I)$. Proposition \ref{prop-st-monomial-adapted-basis} then implies that $\mathbb{B}$ is adapted to $\v_\w$.
\end{proof}

For $\u \in C$ we can also consider the rank one valuations $\v_\u: A \setminus \{0\} \to \Q$ associated to the weight $\u$. Proposition \ref{prop-st-monomial-adapted-basis} in particular implies that the basis $\mathbb{B}$ is also adapted to $\v_\u$. The valuations $\v_\u$ and the basis $\mathbb{B}$ are related as follows.

\begin{proposition} \label{prop-adapted-basis-multiplicative}
We have the following:
\begin{itemize}
\item[(1)] Let $\u_1, \u_2 \in C$ and $c_1, c_2 \in \Q_{\geq 0}$ and put $\u = c_1\u_1 + c_2\u_2$. Then for any basis element $b \in \mathbb{B}$ we have $\v_\u(b) = c_1 \v_{\u_1}(b) + c_2 \v_{\u_2}(b)$.
\item[(2)] Let ${\bf u} = \{\u_1, \ldots, \u_r\} \subset C$ such that its span has maximal dimension $\dim(C)$. As above let $\v_\w$ denote its associated valuation. Let $b_\alpha$, $b_\beta \in \mathbb{B}$. Consider the expansion of the product $b_\alpha b_\beta$ in the basis $\mathbb{B}$:
$$b_{\alpha}b_{\beta} = \sum c_{\alpha, \beta}^{\gamma} b_{\gamma},$$
where the $c_{\alpha, \beta}^{\gamma} \in \k$. Then for every $b_\gamma$, with $ c_{\alpha, \beta}^{\gamma} \neq 0$, and every $i = 1, \ldots, r$ we have $\v_{\u_i}(b_\gamma) \geq \v_{\u_i}(b_\alpha) + \v_{\u_i}(b_\beta)$.
Moreover, there exits $b_\eta \in \mathbb{B}$, with $c_{\alpha, \beta}^\eta \neq 0$, such that, for every $i$, we have $\v_{\u_i}(b_\eta) = \v_{\u_i}(b_\alpha) + \v_{\u_i}(b_\beta)$. 
\item[(3)] If $\k$ is assumed to be algebraically closed and $C$ is a cone of maximal dimension $d = \dim(A)$, then the 
basis element $b_\eta$ in the part (2) is unique.  
\end{itemize}
\end{proposition}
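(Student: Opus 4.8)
The plan is to push everything down to the rank-one valuations $\v_\u$, $\u \in C$, exploiting the fact recorded right after Proposition~\ref{primeconeadapted} that the standard monomial basis $\mathbb{B}$ is adapted to every such $\v_\u$. The basic computation is this: if $b \in \mathbb{B}$ is the image of a standard monomial $\x^\alpha$, then $\v_\u(b) = \u\cdot\alpha$ for every $\u \in C$, since Proposition~\ref{prop-st-monomial-adapted-basis} applied to the trivial expansion $b = 1\cdot b$ makes $\v_\u(b)$ the $\MIN$ over the single weight $\u\cdot\alpha$. A cone of the fan structure on $\T(I)$ is convex and closed under nonnegative scaling, so $\u := c_1\u_1 + c_2\u_2 \in C$, and then $\v_\u(b) = \u\cdot\alpha = c_1(\u_1\cdot\alpha) + c_2(\u_2\cdot\alpha) = c_1\v_{\u_1}(b) + c_2\v_{\u_2}(b)$, which is part~(1).

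For the inequality in part~(2) I would argue one index $i$ at a time; isolating each $\v_{\u_i}$ sidesteps the fact that a lexicographic minimum in $\Q^r$ carries no coordinatewise information. Fix $i$. Since $\u_i \in C$ and $\In_{\u_i}(I) = \In_C(I)$ is prime, Lemma~\ref{lem-gr_v-in_w} gives $\gr_{\u_i}(A) \cong \k[\x]/\In_C(I)$, a domain, so $\v_{\u_i}$ is a genuine valuation and $\v_{\u_i}(b_\alpha b_\beta) = \v_{\u_i}(b_\alpha) + \v_{\u_i}(b_\beta)$. On the other hand $\mathbb{B}$ is adapted to $\v_{\u_i}$, so Proposition~\ref{prop-st-monomial-adapted-basis} applied to $b_\alpha b_\beta = \sum_\gamma c_{\alpha,\beta}^\gamma b_\gamma$ yields $\v_{\u_i}(b_\alpha b_\beta) = \MIN\{\u_i\cdot\gamma \mid c_{\alpha,\beta}^\gamma \neq 0\}$. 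Comparing the two expressions gives $\u_i\cdot\gamma \geq \u_i\cdot(\alpha+\beta)$, i.e. $\v_{\u_i}(b_\gamma) \geq \v_{\u_i}(b_\alpha) + \v_{\u_i}(b_\beta)$, for every $\gamma$ in the support of the product.

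For the existence of $b_\eta$ I would pass to $R = \gr_\w(A) \cong \k[\x]/\In_C(I)$ (Proposition~\ref{prop-val-from-cone}, parts (1) and (2)), a domain graded by $S(A,\v_\w)$ in which the images $\bar{b}_\gamma$ form a homogeneous basis, with $\bar{b}_\gamma$ of degree $M\gamma$; here Proposition~\ref{primeconeadapted} guarantees $\mathbb{B}$ is adapted to $\v_\w$. Because $\v_\w$ is a valuation, the leading form of $b_\alpha b_\beta$ is $\bar{b}_\alpha\cdot\bar{b}_\beta$, which is nonzero since $R$ is a domain and $\bar{b}_\alpha, \bar{b}_\beta \neq 0$; on the other hand, passing $b_\alpha b_\beta = \sum c_{\alpha,\beta}^\gamma b_\gamma$ into the graded piece of degree $M\alpha + M\beta$ shows this leading form equals $\sum c_{\alpha,\beta}^\gamma \bar{b}_\gamma$ over exactly those $\gamma$ with $M\gamma = M\alpha + M\beta$. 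Non-vanishing then produces an $\eta$ in the support with $M\eta = M\alpha + M\beta$, which is precisely $\v_{\u_i}(b_\eta) = \v_{\u_i}(b_\alpha) + \v_{\u_i}(b_\beta)$ for every $i$ (the hypothesis $\dim(C) = \dim(\span({\bf u}))$ is what lets one phrase this for all of $C$, but for the $\u_i$ it is automatic). The step that demands the most care is exactly this one: the leading form of a product must be computed against the \emph{adapted} basis, so that the surviving terms are precisely those of top $M$-degree. Everything else is routine bookkeeping with the machinery already in place.

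Finally, for part~(3): once $\k$ is algebraically closed and $C$ and $\span({\bf u})$ both have dimension $d$, Proposition~\ref{prop-val-from-cone}(4) gives that $\v_\w$ has one-dimensional leaves. By Remark~\ref{rem-adapted-basis-1dimleaves} the assignment $b_\gamma \mapsto \v_\w(b_\gamma) = M\gamma$ is then a bijection between $\mathbb{B}$ and $S(A,\v_\w)$, so at most one $\gamma$ can satisfy $M\gamma = M\alpha + M\beta$; combined with the existence just established, exactly one does, it lies in the support of $b_\alpha b_\beta$, and it is the element $b_\eta$ of part~(2), which is therefore unique.
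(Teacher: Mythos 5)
Your proof is correct and follows essentially the same route as the paper: part (1) and the inequality in (2) from the adaptedness formula of Proposition \ref{prop-st-monomial-adapted-basis} applied to the rank-one valuations $\v_{\u_i}$, existence of $b_\eta$ from the non-vanishing of $\bar{b}_\alpha\bar{b}_\beta$ in the domain $\gr_\w(A)$ (the paper states this contrapositively, you argue it directly), and uniqueness in (3) from the one-dimensional-leaves property. Your version just supplies more of the bookkeeping the paper leaves implicit.
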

\begin{proof}
Part (1) and the first assertion in (2) are direct consequences of Proposition \ref{prop-st-monomial-adapted-basis}. 
To prove the second assertion in (2) note that if the equality was not achieved for some $\gamma$, then the product $b_\alpha b_\beta$ would be $0$ in the associated graded $\gr_\w(A)$ which would imply that $\v_\w$ is not a valuation. Finally, by Theorem \ref{th-full-rank-1-dim-leaves}, the assumptions in (3) imply that the valuation $\v_\w$ is a valuation with one-dimensional leaves. This finishes the proof.   
\end{proof}

\section{Prime cones from valuations}
\label{sec-cone-from-subductive-val}
In this section we associate a prime cone to a weight valuation. Any subductive valuation is a weight valuation (\ref{lem-subductiveweight}), so this construction works for all subductive valuations. First, we make an observation which applies to all valuations. 


\begin{proposition}[Higher rank tropicalization map]    \label{prop-values-in-trop-var}
Let $\v: A \setminus \{0\} \to \Q^r$ be a valuation (not necessarily subductive or with a finite Khovanskii basis). Let $\B = \{b_1, \ldots, b_n\} \subset A$ be a set of algebra generators and let $I \subset \k[\x]$ be the ideal of relations among the $b_i$. Let $M = M(\B, \v)$ be the matrix whose columns are $\v(b_1), \ldots, \v(b_n)$. Then $M$ belongs to the rank $r$ tropical variety $\T^r(I)$ (Definition \ref{def-trop-var-higher-rank}). 
\end{proposition}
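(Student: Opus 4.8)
The plan is to show that the initial ideal $\In_M(I)$ contains no monomial by exhibiting, for any polynomial $p \in I$, a monomial that survives in $\In_M(p)$ but whose image in $A$ is nonzero — which will be impossible if $\In_M(p)$ were a monomial, since $p \in I$ forces a cancellation of leading terms. First I would recall from Lemma \ref{lem-gr_v-in_w} that $\gr_M(A) \cong \k[\x]/\In_M(I)$, where here $\gr_M(A)$ means the associated graded of the weight \emph{quasivaluation} $\v_M = \pi_*(\tilde\v_M)$; so it suffices to show that no variable-monomial $\x^\alpha$ lies in $\In_M(I)$, i.e. that no nonzero monomial of $\k[\x]$ maps to $0$ in $\gr_M(A)$. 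Equivalently, I must rule out the possibility that $\In_M(I)$, a homogeneous ideal for the $\Q^r$-grading induced by $M$, contains a monomial.

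The key observation is that, because $\v$ is a genuine valuation on $A$ (not merely a quasivaluation), $\gr_\v(A)$ is a \emph{domain}, and the columns $\v(b_1),\dots,\v(b_n)$ of $M$ are exactly the values $\v_M(b_i)$ — this is what Proposition \ref{prop-set-quasival} and the surrounding discussion give: a valuation always coincides with the weight quasivaluation built from its own values on a generating set, provided one checks $\v \succeq \v_M$ and $\v_M \succeq \v$. More precisely, I would first argue $\v = \v_M$: the inequality $\v_M(f) \succeq \v(f)$ is clear since for any $p(\x) = \sum c_\alpha \x^\alpha$ with $\pi(p) = f$ we have $\v(f) \succeq \MIN_\alpha\{M\alpha\} = \tilde\v_M(p)$, reversing an inequality, so $\v(f) \preceq \MAX = \v_M(f)$; wait — rather, $\v(f) = \v(\pi(p)) \succeq \MIN\{\v(c_\alpha b^\alpha)\} = \MIN\{M\alpha\}$, hence $\v(f) \succeq \tilde\v_M(p)$ for every lift, giving $\v(f) \succeq \v_M(f)$ after taking the max over lifts. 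Conversely, taking the specific lift of $b_i$ by $x_i$ shows $\v_M(b_i) \succeq \v(b_i) = $ the $i$-th column of $M$, and since $M$'s columns were defined to be $\v(b_i)$, the multiplicativity of $\tilde\v_M$ on monomials and superadditivity of $\v_M$ pin down $\v_M = \v$ on all of $A$. Hence $\gr_M(A) = \gr_\v(A)$ is a domain.

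Given that $\gr_M(A) \cong \k[\x]/\In_M(I)$ is a domain, $\In_M(I)$ is a prime ideal. If it contained a monomial $\x^\alpha = x_1^{\alpha_1}\cdots x_n^{\alpha_n}$, then by primeness it would contain some variable $x_i$; but then the image $\bar b_i$ of $b_i$ in $\gr_M(A)$ would be zero, contradicting that $b_i \neq 0$ in $A$ and that for a valuation the leading term map $f \mapsto \bar f$ kills nothing nonzero (indeed $\bar b_i$ is a nonzero homogeneous element of degree $\v(b_i)$). Therefore $\In_M(I)$ contains no monomial, which is precisely the condition $M \in \T^r(I)$ from Definition \ref{def-trop-var-higher-rank}. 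I expect the main obstacle to be the bookkeeping in establishing $\v = \v_M$ cleanly — in particular checking that the max in the definition of $\v_M$ is attained (so that $\v_M$ is well-defined and $\gr_M(A)$ is correctly computed), which should follow because $\v$ itself realizes the max via any lift whose leading monomials do not cancel, so one can invoke the standing hypothesis that $\v_M(f) \neq \infty$; alternatively one sidesteps this by working directly with $\gr_\v(A)$ and the homomorphism $\k[\x] \to \gr_\v(A)$, $x_i \mapsto \bar b_i$, whose kernel is readily identified with $\In_M(I)$.
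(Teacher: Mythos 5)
Your argument has a genuine gap at its central step: the claim that $\v = \v_\w$ where $\w$ is the matrix of values $\v(b_1), \ldots, \v(b_n)$. The inequality $\v(f) \succeq \v_\w(f)$ is correct, and so is the equality $\v(b_i) = \v_\w(b_i)$ on the generators, but superadditivity of $\v_\w$ only yields $\v_\w(f) \succeq \MIN\{\w\alpha \mid c_\alpha \neq 0\}$ for a lift $\sum c_\alpha \x^\alpha$ of $f$, which does not recover $\v_\w(f) \succeq \v(f)$. In fact $\v = \v_\w$ holds precisely when $\B$ is a Khovanskii basis for $(A,\v)$ (Lemma \ref{lem-subductiveweight}, Theorem \ref{th-Khovanskii-basis-equiv-conditions}), and the proposition is stated for an arbitrary generating set; Remark \ref{rem-quasival-cone}(2) explicitly warns that for a non-subductive $\v$ one cannot conclude $\v = \v_\w$ nor $\gr_\v(A) \cong \k[\x]/\In_\w(I)$. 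The same defect appears in your closing ``sidestep'': the kernel of $\k[\x] \to \gr_\v(A)$, $x_i \mapsto \bar b_i$, is the ideal $I_\B$ of Section \ref{subsec-Khovanskii-basis}, and Lemma \ref{lem-in_w-I_B} only gives the containment $\In_\w(I) \subseteq I_\B$, with equality again characterizing Khovanskii bases. So ``$\gr_\w(A)$ is a domain, hence $\In_\w(I)$ is prime'' is unjustified, and with it the whole primeness route as written. (There is also the secondary issue that the standing attainment hypothesis for the max defining $\v_\w$ is not automatic here.)

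The good news is that your argument repairs easily, and the repaired version is a legitimate alternative to the paper's proof. Use only the containment $\In_\w(I) \subseteq I_\B$ from Lemma \ref{lem-in_w-I_B}: the image of $\k[\x] \to \gr_\v(A)$ is a subalgebra of the domain $\gr_\v(A)$, so $I_\B$ is prime, and $x_i \notin I_\B$ since $\bar b_i \neq 0$; hence $I_\B$, and a fortiori $\In_\w(I)$, contains no monomial. The paper instead argues directly and more elementarily: by Lemma \ref{initialformwrite} it suffices to show no $\In_\w(f)$, $f \in I$, is a monomial, and if $\In_\w(f) = c_\beta \x^\beta$ then the minimum of $\w\alpha$ over the support of $f$ is attained uniquely at $\beta$, so the valuation axioms force $\v(f(b_1,\ldots,b_n)) = \w\beta$, contradicting $f(b_1,\ldots,b_n) = 0$. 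That argument uses nothing beyond the valuation axioms and avoids $\v_\w$, $\gr_\v(A)$, and primeness altogether.
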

\begin{proof}
The proof is the same as the usual proof when the valuation has rank $1$. Let $f = \sum_\alpha c_\alpha \x^\alpha \in I$ and suppose by contradiction that $\In_\w(f)$ is the monomial $c_\beta \x^\beta$. Then $\v(f(b_1, \ldots, b_n)) = \v(b_1^{\beta_1} \cdots b_n^{\beta_n}) = M\beta$, which contradicts $f(b_1, \ldots, b_n) = 0$. 
\end{proof}
We think of $\v \mapsto M$ as a higher rank generalization of the tropicalization map in usual tropical geometry. The above (Proposition \ref{prop-values-in-trop-var}) has also been observed in \cite{FosterDhruv}.

Now let us assume that $\Q^r$ is equipped with the standard lexicographic order. 
Let $M \in \Q^{r \times n}$ be a matrix such that the corresponding weight quasivaluation $\v = \v_M: A \setminus \{0\} \to \Q^r$ is indeed a valuation. The next proposition constructs a prime cone $C_\v$ in the tropical variety $\T(I)$ associated to the valuation $\v$.

\begin{proposition}  \label{prop-prime-cone-from-subductive-val}
With notation as above, there exists an open cone $C_\v$ in the tropical variety $\T(I)$ with $\dim(C_\v) \geq \rank(\w)$ such that $\In_\w(I) = \In_\u(I)$, for any $\u \in C_\v$.
Thus, if we denote the common initial ideal $\In_\u(I)$, $\u \in C_\v$, by $\In_{C_\v}(I)$, we have: $$\gr_\v(A) \cong \k[\x] / \In_{C_\v}(I).$$ 
In particular, if $\v$ has maximal rank $d = \dim(A)$ then $C_\v$ has dimension $d$.
\end{proposition}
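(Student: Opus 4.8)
The plan is to realize $C_\v$ as a cone of the tropical fan carrying the initial ideal $\In_M(I)$, and then to exhibit a $\rank(M)$-dimensional family of weights inside it. First I would extract what the hypotheses give. Since $\v_M$ is a valuation, $\gr_M(A)$ is a domain, and by Lemma~\ref{lem-gr_v-in_w} it is isomorphic to $\k[\x]/\In_M(I)$ via the map sending the class of $x_i$ to the image $\bar b_i$ of $b_i$ in $\gr_M(A)$. Our standing assumption that $\v_M(f)\neq\infty$ for $0\neq f$ forces $\bar b_i\neq 0$, so no monomial $\x^\beta=\prod\bar b_i^{\beta_i}$ can lie in $\In_M(I)$; thus $\In_M(I)$ is prime and contains no monomial. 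In particular $x_i\notin\In_M(I)$, so by Lemma~\ref{lem-contractcalc} $\v_M(b_i)$ equals the $i$-th column $w_i$ of $M$; hence $S(A,\v_M)$ contains every $w_i$ and is contained in $M\Z^n$, and therefore $\rank(\v_M)=\rank(M)$.

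Next, Proposition~\ref{prop-in-u-rep-in-w} furnishes $\u_0\in\Q^n$ with $\In_{\u_0}(I)=\In_M(I)$; since this ideal has no monomial, $\u_0\in\T(I)$. By the Gr\"obner-fan structure on $\T(I)$, the locus $\{\u\in\T(I):\In_\u(I)=\In_M(I)\}$ is a single relatively open cone, which I take to be $C_\v$. It is prime because $\In_{C_\v}(I)=\In_M(I)$ is prime, and combining with the previous paragraph, $\gr_\v(A)=\gr_M(A)\cong\k[\x]/\In_M(I)=\k[\x]/\In_{C_\v}(I)$.

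It remains to prove $\dim C_\v\geq\rank(M)$. Passing to the homogenization (so that, exactly as in the proof of Proposition~\ref{prop-in-u-rep-in-w}, we may assume $I$ is homogeneous for a positive grading and $\GR^r(I)=\Q^{r\times n}$), fix a term order $>$ with $M\in C^r_>(I)$ and let $G=G_>(I)$ be its reduced Gr\"obner basis. Applying Lemma~\ref{lem-weight-approx} to the sets of $M$-weights of the monomials appearing in the elements of $G$ shows that the set $\Theta$ of $v\in\Q^r$ satisfying $v\cdot(M\alpha'-M\alpha)>0$ for all $g\in G$, all $\alpha$ in the $M$-initial support of $g$, and all $\alpha'$ with $M\alpha'\succ M\alpha$, is nonempty, hence a full-dimensional open cone in $\Q^r$. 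For $v\in\Theta$ one checks directly that $\In_{vM}(g)=\In_M(g)$ for all $g\in G$, and then Lemmas~\ref{Cinequalities} and~\ref{initialreduced} give $vM\in C^r_>(I)$ and $\In_{vM}(I)=\In_M(I)$. Since $v\mapsto vM$ is a linear map of rank $\rank(M)$, the set $\{vM:v\in\Theta\}$ is a $\rank(M)$-dimensional subset of $C_\v$, so $\dim C_\v\geq\rank(M)$. Finally, if $\v$ has maximal rank $d=\dim(A)$ then $\dim C_\v\geq\rank(M)=d$, while $\dim C_\v\leq\dim\T(I)=d$ by the structure theorem for tropical varieties, so $C_\v$ has dimension exactly $d$.

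The main obstacle is this last dimension count: producing the open cone $\Theta$ and verifying that the weights $vM$ do not change the initial ideal requires the Gr\"obner-region estimates of Section~\ref{sec-groebner}, which is also why one first reduces to the homogeneous case. The existence of the prime cone $C_\v$ and the identification of $\gr_\v(A)$ are then formal consequences of results already in hand.
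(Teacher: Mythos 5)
Your proof is correct and follows essentially the same strategy as the paper: homogenize, use Proposition \ref{prop-in-u-rep-in-w} to produce a rank-one weight $\u_0$ with $\In_{\u_0}(I)=\In_\w(I)$, take $C_\v$ to be the Gr\"obner cone containing it, and then exhibit a $\rank(\w)$-dimensional family of rank-one weights all yielding the same initial ideal. The only genuine divergence is in how that family is produced. The paper fixes $\u_0$ and perturbs it by small vectors in the row span $H$ of $\w$, using the fact that each $\In_{(0,\w)}(g)$ is homogeneous with respect to every row of $\w$; you instead apply Lemma \ref{lem-weight-approx} to obtain a full-dimensional open cone $\Theta\subset\Q^r$ of linear functionals compatible with the $\w$-initial forms of the reduced Gr\"obner basis and push it forward under the rank-$\rank(\w)$ map $v\mapsto v\w$ (for the step from ``same initial forms on $G_>(I)$'' to ``same initial ideal'' the relevant citation is Lemma \ref{Cgeninequalities} rather than Lemmas \ref{Cinequalities} and \ref{initialreduced}). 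Both arguments land on an open subset of $H$ inside $C_\v$, so the dimension count is the same. One small circularity in your first paragraph: identifying the monomial $\x^\beta$ with $\prod\bar b_i^{\beta_i}$ under the isomorphism of Lemma \ref{lem-gr_v-in_w} already presupposes $\v_\w(b_i)=w_i$, which is part of what you are deducing; to rule out monomials in $\In_\w(I)$ one should either assume $\w=\iota(\w)$ (as in the intended applications, where the columns of $\w$ are the values $\v(b_i)$) or first replace $\w$ by $\iota(\w)$ via Proposition \ref{prop-set-quasival}. The paper's own proof is equally terse on this point, so this does not affect the substance of your argument.
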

\begin{proof}
We pass to the homogenization $I_h \subset \k[x_0, \x]$. Let $(0, \w) \in \Q^{m \times (n+1)}$ be the matrix with a $0$ column inserted to the left.  
We regard it as a weighting matrix on $\k[x_0, \x]$.
Proposition \ref{prop-in-u-rep-in-w} implies that there is $\u \in \Q^n$ such that $\In_{(0, \u)}(I_h) = \In_{(0, \w)}(I_h)$, and furthermore  $\In_{\u}(I) = \In_{\w}(I)$. Let $C \in \Sigma(I_h)$ be an open cone that contains $(0, \u)$ and let $G_>(I_h)$ be an appropriate reduced Gr\"obner basis for $I_h$. For any $g \in G_>(I_h)$, the initial form $\In_{(0, \w)}(g)$ is a polynomial $\sum_i p_i(\x)x_0^i$, such that each $p_i(\x)$ is homogeneous with respect to $\w$.  Let $\u_1, \ldots \u_m$ denote the rows of $M$ and let $H$ be its row span over $\Q$. Note that each $p_i(\x)$ is homogeneous with respect to each $\u_j$. It follows that there is some $\epsilon > 0$ such that any $\u'$ in the ball $B_{\epsilon}(0) \subset H$ has the property $\In_{(0,\u) + (0,\u')}(g) = \In_{(0,\u)}(g)$ for all $g \in G_>(I_h)$. Since $\dim(H) = r$ we conclude that $\dim(C \cap \Q^n) \geq r$. The remaining parts of the proposition follow from the fact that if $\v$ is a valuation then $\gr_\v(A)$ is a domain and hence $C_\v$ is a prime cone. This implies that $C_\v \subset \T(I)$ and that it is of dimension at most $d$. 
\end{proof}

\begin{remark}  \label{rem-quasival-cone}
As above let $\B = \{b_1, \ldots, b_n\}$ be a set of algebra generators. Let $\v: A \setminus \{0\} \to \Q^r$ be a valuation and let $M$ be the matrix whose columns are $\v(b_1), \ldots, \v(b_n)$. (1) By \ref{lem-subductiveweight} if we assume that $\v$ is subductive with Khovanskii basis $\B$, then $\v = \v_M$ is a weight valuation. (2) If we do not assume that $\v$ is subductive then by Proposition \ref{prop-values-in-trop-var}, we still can find a cone $C$ in the tropical variety $\T(I)$ such that $\In_\w(I) = \In_C(I)$. But we cannot in general conclude that $\v = \v_M$ and hence we do not know that $\gr_\v(A) \cong \k[\x]/\In_C(I)$. 
\end{remark}

\begin{remark}   \label{rem-st-monomial-basis-adapted-for-v}
The proof of Proposition \ref{prop-prime-cone-from-subductive-val} implies that the cone $C_\v$ can be described using a reduced Gr\"obner basis of the homogenized ideal $I_h$. 
\end{remark}



Let $\v = \v_M: A \setminus \{0\} \to \Q^r$ be a weight valuation with associated prime cone $C_\v$. Let $M'$ be a matrix with rows ${\bf u} = \{\u'_1, \ldots, \u'_s\} \subset C_\v$ such that $rank(M') = \dim(C_\v)$.

\begin{proposition}  \label{prop-S_v-S_v_w-lin-iso}
The value semigroups $S(A, \v_M)$ and $S(A, \v_{M'})$ are linearly isomorphic.  Consequently, the cones $P(A, \v_M)$ and $P(A, \v_{M'})$ are also linearly isomorphic.
\end{proposition}


\begin{proof}
Both associated graded algebras $\gr_{\v_M}(A)$ and $\gr_{\v_{M'}}(A)$ are isomorphic to $\k[\x]/\In_{\u'}(I)$ for any $\u'$ in $C_\v$ by maps which identify the images of the $x_i$. The proof of Proposition \ref{prop-S-lin-iso} implies that the value semigroup of $\v_M$ is isomorphic to the value semigroup of  $\v_{M'}$.
\end{proof}

\section{Compactifications and degenerations} \label{sec-compactifications}
In this section we use an (open) prime cone $C$ in the tropical variety $\T(I)$, for some presentation $A \cong \k[\x] / I$, to construct a compactification of $X = \Spec(A)$. As a byproduct, when the cone $C$ has maximal dimension $d=\dim(A)$, we also get a toric degeneration of $X$.
This construction closely resembles the ``geometric tropicalization'' in \cite{HKT}, \cite{Sturmfels-Tevelev}, \cite{Tevelev}.

We use notation as before. To simplify the discussion we assume that the cone $C$ lies in the negative orthant, i.e. $C \subset \T^-(I) = \T(I) \cap \Q^n_{\leq 0}$. 
We will use results in Section \ref{sec-val-from-prime-cone}. Let ${\bf u} = \{\u_1, \ldots, \u_r\} \subset C$, for simplicity we assume that the $\u_i$ are linearly independent and $r = \dim(C)$. Let $M = M_{\bf u} \in \Q^{r \times n}$ be the matrix whose rows are $\u_1, \ldots, \u_r$. Let $\v_\w$ be the corresponding valuation as constructed in Section \ref{sec-val-from-prime-cone}. We recall that for every $i$, the $i$-th column of $M$ is the vector $\v_M(b_i)$ which in turn is equal to the vector $(\v_{\u_1}(b_i), \ldots, \v_{\u_r}(b_i))$.  Here $\v_{\u_i}$ is the rank $1$ valuation associated to $\u_i \in C$.
We also know that the rank of the valuation $\v_M$ is equal to $\rank(M) = r$ (Propositions \ref{prop-val-from-cone} and \ref{prop-v_w-v_u_i}). Since $C \subset \T^-(I)$ is always contained in the Gr\"obner region $\GR(I)$, by Proposition \ref{primeconeadapted} there is an adapted basis $\mathbb{B}$ for $(A, \v_M)$ (in fact, $\mathbb{B}$ can be taken to be a standard monomial basis for $I$ and some monomial ordering $>$ such that $M \in C_>(I)$).  

After a scaling in $\Q^r$ if needed, we can assume that the value semigroup of $\v_\w$ lies in $\Z^r$.
To construct our compactification, we choose one additional piece of information, namely a lattice point $\delta = (\delta_1, \ldots, \delta_r) \in \Z^r$ which lies in $P^\circ(A, \v_M)$, the relative interior of the Newton-Okounkov cone (Definition \ref{def-Newton-Okounkov-cone}). Given ${\bf u}$ and $\delta$, we construct a projective compactification $\bar{X}_{{\bf u}, \delta} \supset X$. We will give different constructions of this compactification:
\begin{itemize}
\item[(i)] As $\Proj$ of a certain $\Z_{\geq 0}$-graded algebra $T_{{\bf u}, \delta}(A)$.
\item[(ii)] As the GIT quotient, at $\delta$, of an affine variety $E_{\bf u}$ by a natural action of the torus $\Gm^r$. 
\item[(iii)] When the cone $C$ has maximal dimension $r = d = \dim(A)$, we can realize the compactification $\bar{X}_{{\bf u}, \delta}$ as the closure of $X$ embedded into a projective toric variety $Y_{{\bf u}, \delta}$.
\end{itemize}

\subsection{Rees algebras}   \label{subsec-Rees}
In this section we define a generalized Rees algebra $R_{\bf u}(A)$. It plays the main role in the construction of our compactification.

Throughout Section \ref{sec-compactifications}, $\geq$ denotes the partial order on $\Q^r$ defined by comparing the vectors componentwise, namely, $(p_1, \ldots, p_r) \geq (q_1, \ldots, q_r)$ if and only if $p_i \geq q_i$ for all $i$. 

As above, let $\mathbb{B} \subset A$ be the vector space basis for $A$ adapted to the valuation $\v_M$. Beside the valuation $\v_M$ we can consider the rank $1$ valuations $\v_{\u_i}$ corresponding to the vectors 
$\u_i \in {\bf u}$. For ${\bf p} = (p_1, \ldots, p_r)$ we define the set $\mathbb{B}_{\bf p}$ by:
$$\mathbb{B}_{\bf p} = \{b \in \mathbb{B} \mid \v_{\u_i}(b) = p_i,~ \forall i=1, \ldots, r\} \subset \mathbb{B}.$$
We note that since $C \subset \Q^n_{\leq 0}$, the set $\mathbb{B}_{\bf p}$ is nonempty only for ${\bf p} \in \Z^r_{\leq 0}$. For ${\bf p} \in \Z_{\leq 0}^r$ we then define the subspaces $W_{\bf u}({\bf p})$ and $F_{\bf u}({\bf p}) \subset A$ as follows: $$W_{\bf u}({\bf p}) = \span(\mathbb{B}_{\bf p}),$$
$$ F_{\bf u}({\bf p}) = \bigoplus_{{\bf q} \geq {\bf p}} W_{\bf u}({\bf q}) = \span(\bigcup_{{\bf q} \geq {\bf p}} \mathbb{B}_{\bf q}).$$
Clearly, $\{F_{\bf u}({\bf p})\}_{{\bf p} \in \Z_{\leq 0}^r}$ is a multiplicative filtration of $A$ (see Proposition \ref{prop-adapted-basis-multiplicative}). 

Consider the Laurent polynomial algebra $A[{\bf t}^\pm] = A[t_1^\pm, \ldots, t_r^\pm]$, where we have used ${\bf t}$ as an abbreviation for the indeterminates $(t_1, \ldots, t_r)$. Also, given ${\bf p} = (p_1, \ldots, p_r)$ we will write 
${\bf t}^{\bf p}$ to denote the monomial $t_1^{p_1} \cdots t_r^{p_r}$. The Rees algebra $R_{\bf u}(A)$ is the subalgebra of the Laurent polynomial algebra 
$A[{\bf t}^\pm]$ defined by:
$$R_{\bf u}(A) = \bigoplus_{{\bf p} \in \Z_{\leq 0}^r} F_{\bf u}({\bf p}) {\bf t}^{-{\bf p}} \subset \bigoplus_{{\bf p} \in \Z^r} A{\bf t}^{-{\bf p}} = A[{\bf t}^\pm].$$

Let $\phi: \k[{\bf t}] \to R_{\bf u}(A)$ be the homomorphism obtained by sending $t_i$ to $1t_i \in F_{\bf u}(-e_i)t_i$, for all $i$. Here $e_i = (0, \ldots, 1, \ldots, 0)$ denotes the $i$-th standard basis element in $\Z^r$. The homomorphism $\phi$ gives $R_{\bf u}(A)$ the structure of a $\k[{\bf t}]$-module.
We let $E_{\bf u}$ denote the affine scheme $\Spec(R_{\bf u}(A))$ defined by this Rees algebra. The next proposition establishes basic properties of the scheme $E_{\bf u}$ and its relationship to $X$. We leave the proof of this proposition to the reader (it is very similar to \cite[Proposition 2.2]{Teissier}).  

\begin{proposition}   \label{prop-Rees}
As above, let $R_{\bf u}(A)$ be the Rees algebra of $A$ with respect to ${\bf u} \subset C$. We then have:
\begin{itemize}
\item[(1)] The map $\phi: \k[{\bf t}] \to R_{\bf u}(A)$ defines a flat family $\pi: E_{\bf u} \to \mathbb{A}^r$.  
\item[(2)] There is a natural action of the torus $\Gm^r$ on $E_{\bf u}$ which lifts the natural action of $\Gm^r$ on $\mathbb{A}^r$,
\item[(3)] $\Spec(A[{\bf t}^\pm]) = X \times \Gm^r$ is the complement of the hypersurface $V_{\bf u} \subset E_{\bf u}$ defined by the equation $t_1\cdots t_r = 0$.
\end{itemize}
\end{proposition}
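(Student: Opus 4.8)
The plan is to derive all three assertions from one structural observation about $R_{\bf u}(A)$: lifting the adapted basis $\mathbb{B}$ produces an explicit free $\k[{\bf t}]$-module basis, and $R_{\bf u}(A)$ is a $\Z^r$-graded subalgebra of $A[{\bf t}^\pm]$. For part (1), I would first recall that, by Proposition \ref{prop-st-monomial-adapted-basis} applied to each $\u_i \in C$, the standard monomial basis $\mathbb{B}$ is adapted to each rank one valuation $\v_{\u_i}$; hence $\mathbb{B}$ is the disjoint union $\bigsqcup_{\s} \mathbb{B}_\s$ over $\s \in \Z^r_{\le 0}$ (the restriction to $\s \le 0$ is forced by $C \subset \Q^n_{\le 0}$), and $A = \bigoplus_\s W_{\bf u}(\s)$. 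Writing $\s(b)$ for the unique $\s$ with $b \in \mathbb{B}_\s$, I would then rewrite the defining sum $R_{\bf u}(A) = \bigoplus_{\r \le 0} F_{\bf u}(\r)\,{\bf t}^{-\r}$: the Laurent monomials ${\bf t}^{-\r}$ that may accompany a fixed $b \in \mathbb{B}$ inside $R_{\bf u}(A)$ are exactly those with $\r \le \s(b)$, i.e. ${\bf t}^{-\s(b)}$ times an arbitrary monomial of $\k[{\bf t}]$. This shows that the elements $b\,{\bf t}^{-\s(b)}$, $b \in \mathbb{B}$, form a free $\k[{\bf t}]$-module basis of $R_{\bf u}(A)$. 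Since every standard monomial is a product of the $b_j$, one moreover gets $R_{\bf u}(A) = \k[\, t_1,\ldots,t_r,\; b_1{\bf t}^{-\v_M(b_1)},\ldots, b_n{\bf t}^{-\v_M(b_n)}\,]$, so $R_{\bf u}(A)$ is a finitely generated $\k$-algebra; together with freeness over $\k[{\bf t}]$ this gives that $\pi: E_{\bf u} \to \mathbb{A}^r$ is a flat family.

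For part (2), I would put on $A[{\bf t}^\pm] = \bigoplus_{{\bf m}\in\Z^r} A\,{\bf t}^{\bf m}$ the $\Z^r$-grading in which $A$ sits in degree $\mathbf{0}$ and $t_i$ in degree $e_i$, and note that $R_{\bf u}(A) = \bigoplus_{{\bf m}\ge \mathbf{0}} F_{\bf u}(-{\bf m})\,{\bf t}^{\bf m}$ is a sum of whole graded pieces, hence a $\Z^r$-graded subalgebra. A $\Z^r$-grading on a $\k$-algebra is precisely a $\Gm^r$-action on its spectrum, so this grading produces the desired $\Gm^r$-action on $E_{\bf u}$. As $\phi(t_i) = t_i$ is homogeneous of degree $e_i$, the map $\phi$ is graded for the standard grading on $\k[{\bf t}]$; equivalently $\pi$ is $\Gm^r$-equivariant for the scaling action on $\mathbb{A}^r$, which is exactly the statement that the action on $E_{\bf u}$ lifts the one on $\mathbb{A}^r$.

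For part (3), I would compute the localization $R_{\bf u}(A)[(t_1\cdots t_r)^{-1}]$. The inclusion into $A[{\bf t}^\pm]$ is clear because $R_{\bf u}(A)\subset A[{\bf t}^\pm]$ and $t_1\cdots t_r$ is already a unit there. Conversely, any $a\,{\bf t}^{\bf m}$ with $a\in A$ and ${\bf m}\in\Z^r$ has $a\in F_{\bf u}(\r)$ for some $\r\le 0$ — take $\r$ componentwise below the finitely many degrees $\s(b)$ occurring in the expansion of $a$ in $\mathbb{B}$ — so $a\,{\bf t}^{-\r}\in R_{\bf u}(A)$ and $a\,{\bf t}^{\bf m} = (a\,{\bf t}^{-\r})\,{\bf t}^{{\bf m}+\r}$; every ${\bf t}^{\bf k}$ lies in the localization since $t_i\in R_{\bf u}(A)$ and $t_i^{-1} = (t_1\cdots t_r)^{-1}\prod_{j\ne i}t_j$. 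Hence $R_{\bf u}(A)[(t_1\cdots t_r)^{-1}] = A[{\bf t}^\pm] = A\otimes_\k\k[{\bf t}^\pm]$, so the principal open subset $D(t_1\cdots t_r) = E_{\bf u}\setminus V_{\bf u}$ equals $\Spec(A[{\bf t}^\pm]) = X\times\Gm^r$; by part (2) this identification is $\Gm^r$-equivariant.

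I do not anticipate a genuine obstacle: once the free-basis description of $R_{\bf u}(A)$ is in hand, all three statements are bookkeeping, which is presumably why the proof is left to the reader. The only point that needs a little care is the freeness claim in part (1), where one must keep the two $\Z^r$-gradings apart — the one by ${\bf m}$ coming from the ${\bf t}$-monomials and the one by $\s$ coming from the adapted basis — and use $C\subset\Q^n_{\le 0}$ to guarantee that $W_{\bf u}(\s)$ vanishes unless $\s\le 0$, so that the reindexing of the defining sum is legitimate.
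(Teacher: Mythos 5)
The paper gives no argument for this proposition (``We leave the proof of this proposition to the reader''), so there is nothing to compare against; your proof is correct, and the central observation --- that $\{\,b\,{\bf t}^{-\s(b)}\mid b\in\mathbb{B}\,\}$ is a free $\k[{\bf t}]$-module basis of $R_{\bf u}(A)$, from which flatness and finite generation in (1), the $\Z^r$-grading giving the $\Gm^r$-action in (2), and the localization $R_{\bf u}(A)[(t_1\cdots t_r)^{-1}]=A[{\bf t}^{\pm}]$ in (3) all follow --- is exactly the bookkeeping the authors intended. Your care in checking that $W_{\bf u}(\s)=0$ unless $\s\le 0$ (forced by $C\subset\Q^n_{\le 0}$) and that each generator $b_j{\bf t}^{-\v_M(b_j)}$ indeed lies in $R_{\bf u}(A)$ covers the only points where a reader could slip.
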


Next, we describe the fibers of the map $\pi: E_{\bf u} \to \mathbb{A}^r$.  
By Proposition \ref{prop-Rees}(2, 3), for any $c=(c_1, \ldots, c_r)$ with $c_i \neq 0$, for all $i$, the fiber $\pi^{-1}(c)$ is isomorphic to $X$. We will see below that all other fibers of the family are all degenerations of $X$ coming from subsets of ${\bf u}$. Let $\sigma \subset {\bf u}$. Analogous to $W_{\bf u}({\bf p})$ and $F_{\bf u}({\bf p})$, given ${\bf p}'=(p'_i)_{\u_i \in \sigma} \in \Z_{\leq 0}^{|\sigma|}$ we can define $W_{\sigma}({\bf p}') = \span\{b \in \mathbb{B} \mid \v_{\u_i}(b) = p'_i,~\forall \u_i \in \sigma\}$ and $F_{\sigma}({\bf p}') = \span\{b \in \mathbb{B} \mid \v_{\u_i}(b) = p'_i,~\forall \u_i \in \sigma\}$. We then have the direct sum decomposition $A = \bigoplus_{{\bf p}'} W_{\sigma}({\bf p}')$. Let $\gr_{\sigma}(A)$ denote the associated graded algebra of the filtration 
$\{F_\sigma({\bf p}')\}_{{\bf p}' \in \Z_{\leq 0}^{|\sigma|}}$ on $A$. Proposition \ref{prop-adapted-basis-multiplicative} implies that $\gr_{\sigma}(A)$ is a domain and hence this filtration must come from a valuation (see Section \ref{subsec-quasival-filtration}).

\begin{proposition} \label{prop-Rees-fibers}
Let $c_\sigma = (c_{\sigma, 1}, \ldots, c_{\sigma, r}) \in \Z^r$ be the vector of $0$'s and $1$'s defined by: 
$$c_{\sigma, i} = \begin{cases} 0 & \u_i \in \sigma \\ 1 &
\u_i \notin \sigma, \end{cases}$$
for all $i = 1, \ldots, r$.
Then the fiber $\pi^{-1}(c_{\sigma})$ is isomorphic to $X_{\sigma} = \Spec(\gr_{\sigma}(A))$.  
\end{proposition}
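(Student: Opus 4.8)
The plan is to identify the coordinate ring of the fiber $\pi^{-1}(c_\sigma)$, namely $R_{\bf u}(A) \otimes_{\k[{\bf t}]} \k[{\bf t}]/\mathfrak{m}_{c_\sigma}$ with $\mathfrak{m}_{c_\sigma} = \langle t_i : \u_i \in \sigma \rangle + \langle t_j - 1 : \u_j \notin \sigma \rangle$, with $\gr_\sigma(A)$, by computing both sides in the adapted basis $\mathbb{B}$ of $(A, \v_M)$ provided by Proposition \ref{primeconeadapted}. First I would record an explicit basis of the Rees algebra. For $b \in \mathbb{B}$ set $\v_M(b) = (\v_{\u_1}(b), \ldots, \v_{\u_r}(b))$, which lies in $\Z^r_{\leq 0}$ since $C \subset \Q^n_{\leq 0}$. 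Because $\v_M(b) \leq 0$, a monomial $b\,{\bf t}^{-\r}$ belongs to $R_{\bf u}(A)$ precisely when $\r \leq \v_M(b)$; parametrizing $\r = \v_M(b) - {\bf m}$ with ${\bf m} \in \Z^r_{\geq 0}$ shows that $\{\, b\,{\bf t}^{{\bf m} - \v_M(b)} : b \in \mathbb{B},\ {\bf m} \in \Z^r_{\geq 0}\,\}$ is a $\k$-basis of $R_{\bf u}(A)$, and---since multiplication by $t_i$ sends $b\,{\bf t}^{{\bf m} - \v_M(b)}$ to $b\,{\bf t}^{{\bf m} + e_i - \v_M(b)}$---that $R_{\bf u}(A)$ is a free $\k[{\bf t}]$-module with basis $\{\, b\,{\bf t}^{-\v_M(b)} : b \in \mathbb{B}\,\}$ (this reproves flatness, Proposition \ref{prop-Rees}(1)). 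In particular every fiber of $\pi$ has $\k$-basis the images of the $b\,{\bf t}^{-\v_M(b)}$, $b \in \mathbb{B}$.

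Next I would determine the ring structure on the fiber over $c_\sigma$. For $b, b' \in \mathbb{B}$ expand $bb' = \sum_{b''} c_{b,b'}^{b''}\, b''$ in $A$. By Proposition \ref{prop-adapted-basis-multiplicative}---equivalently, by applying Lemma \ref{subductadapt} to each rank-one valuation $\v_{\u_i}$, to which $\mathbb{B}$ is also adapted---every $b''$ with $c_{b,b'}^{b''} \neq 0$ satisfies $\v_{\u_i}(b'') \geq \v_{\u_i}(b) + \v_{\u_i}(b')$ for all $i$, so ${\bf m}(b'') := \v_M(b'') - \v_M(b) - \v_M(b') \in \Z^r_{\geq 0}$. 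Hence in $R_{\bf u}(A)$
\[
\big(b\,{\bf t}^{-\v_M(b)}\big)\big(b'\,{\bf t}^{-\v_M(b')}\big) \;=\; \sum_{b''} c_{b,b'}^{b''}\, {\bf t}^{{\bf m}(b'')}\, \big(b''\,{\bf t}^{-\v_M(b'')}\big),
\]
and reducing modulo $\mathfrak{m}_{c_\sigma}$ replaces ${\bf t}^{{\bf m}(b'')}$ by $\prod_{\u_i \in \sigma} 0^{m_i(b'')}$, which is $1$ when $\v_{\u_i}(b'') = \v_{\u_i}(b) + \v_{\u_i}(b')$ for every $\u_i \in \sigma$ and $0$ otherwise. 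Thus, writing $\bar b$ for the class of $b\,{\bf t}^{-\v_M(b)}$ in the fiber, the product there is $\bar b\, \bar b' = \sum c_{b,b'}^{b''}\, \bar b''$, summed over the $b''$ with $\v_{\u_i}(b'') = \v_{\u_i}(b) + \v_{\u_i}(b')$ for all $\u_i \in \sigma$.

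Finally I would compare with $\gr_\sigma(A)$. Unwinding its description in terms of the adapted basis, $\gr_\sigma(A)$ has $\k$-basis $\{[b]_\sigma : b \in \mathbb{B}\}$, where $[b]_\sigma$ is the image of $b$ in the graded piece $W_\sigma(\v_\sigma(b))$ with $\v_\sigma(b) := (\v_{\u_i}(b))_{\u_i \in \sigma}$, and $[b]_\sigma [b']_\sigma$ equals the image of $bb'$ in $W_\sigma(\v_\sigma(b) + \v_\sigma(b'))$, that is $\sum c_{b,b'}^{b''}\, [b'']_\sigma$ summed over the $b''$ with $\v_\sigma(b'') = \v_\sigma(b) + \v_\sigma(b')$---exactly the product rule found above on the fiber. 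Therefore the $\k$-linear bijection $\bar b \mapsto [b]_\sigma$ is a $\k$-algebra isomorphism, so $\pi^{-1}(c_\sigma) \cong \Spec(\gr_\sigma(A)) = X_\sigma$. I expect the whole argument to be essentially bookkeeping: the only genuinely substantive ingredient is the multiplicativity of the adapted basis (Proposition \ref{prop-adapted-basis-multiplicative}), and the only step requiring care is verifying that specialization at $c_\sigma$ annihilates precisely the terms of the product with $m_i(b'') > 0$ for some $\u_i \in \sigma$. (Alternatively, one can first set $t_j = 1$ for $\u_j \notin \sigma$---identifying $R_{\bf u}(A)/\langle t_j - 1 : \u_j \notin \sigma\rangle$ with the analogous Rees algebra $R_\sigma(A)$---and then invoke the one-step degeneration $R_\sigma(A)/({\bf t}_\sigma) \cong \gr_\sigma(A)$.)
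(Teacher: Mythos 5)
Your proof is correct and follows essentially the same route as the paper: both identify the fiber using the adapted basis $\mathbb{B}$ and reduce the ring-structure comparison to the multiplicativity statement of Proposition \ref{prop-adapted-basis-multiplicative} (your parenthetical two-step specialization at the end is in fact exactly the paper's argument). Your version is somewhat more explicit, since you exhibit the free $\k[{\bf t}]$-module basis $\{b\,{\bf t}^{-\v_M(b)}\}$ and carry out the product computation that the paper leaves to the reader.
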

\begin{proof}
Specializing $t_i =1$ for $\u_i \notin \sigma$ yields an algebra graded by $\Z_{\leq 0}^{|\sigma|}$ . For every ${\bf p}' \in \Z_{\leq 0}^{|\sigma|}$, the corresponding graded component is:
$$F_{\sigma}({\bf p}') = \sum_{{\bf p}|_{\sigma} = {\bf p}'} F_{\bf u}({\bf p}).$$
Also, specializing $t_i = 0$ for $u_i \in \sigma$ yields an algebra graded by $\Z_{\leq 0}^{|\sigma|}$ with the graded components:
\begin{equation}
F_{\sigma}({\bf p}')/ \sum_{{\bf q}' \geq {\bf p}'} F_{\sigma}({\bf q}'),
\end{equation}
for ${\bf p}' \in \Z_{\leq 0}^{|\sigma|}$. It is straightforward to check that $F_\sigma({\bf p}')$ has a vector space basis consisting of the images of those $b \in \mathbb{B}$ with $\v_{u_i}(b) = p_i$ for $u_i \in \sigma$. It follows that these graded components can be identified with the space $W_{\sigma}({\bf p}')$.  We leave it to the reader to check that the multiplication operation is likewise the same as in $\gr_\sigma(A)$.
\end{proof}

Proposition \ref{prop-Rees-fibers} and Proposition \ref{prop-Rees}(2) imply the following: 
\begin{corollary}[Toric degeneration of $X$]   \label{cor-toric-degen}  
All fibers of the family $\pi: E_{\bf u} \to \mathbb{A}^r$ are reduced and irreducible. Moreover, these fibers are degenerations of $X$ corresponding to valuations constructed from subsets of ${\bf u}$. In particular, the fiber over the origin is $\Spec(\gr_{\v_M}(A))$. Moreover, 
if $r = d = \dim(A)$, the fiber over the origin is $\Spec(\k[S(A, \v_{\w})]$ which is a (not necessarily normal) affine toric variety (see Proposition \ref{prop-grA-semigp-algebra}). 
\end{corollary}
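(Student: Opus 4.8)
The plan is to derive the statement from the three preceding propositions together with the criterion recalled in Section~\ref{subsec-quasival-filtration} that a discrete quasivaluation is a valuation exactly when its associated graded algebra is a domain; beyond these inputs the corollary is essentially a bookkeeping argument.

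First I would exploit the torus action. By Proposition~\ref{prop-Rees}(2) the torus $\Gm^r$ acts on $E_{\bf u}$ lifting its standard action on $\mathbb{A}^r$, so any two fibers of $\pi$ over points in a common $\Gm^r$-orbit are isomorphic. The $\Gm^r$-orbits on $\mathbb{A}^r$ are the coordinate strata, and each contains exactly one of the zero-one vectors $c_\sigma$ with $\sigma \subseteq {\bf u}$; concretely, a point $c \in \mathbb{A}^r$ lies in the orbit of $c_\sigma$ for $\sigma = \{\u_i \mid c_i = 0\}$. Hence it suffices to analyze the fibers $\pi^{-1}(c_\sigma)$. By Proposition~\ref{prop-Rees-fibers} we have $\pi^{-1}(c_\sigma) \cong X_\sigma = \Spec(\gr_\sigma(A))$, and, as already noted just before that proposition, Proposition~\ref{prop-adapted-basis-multiplicative} shows that $\gr_\sigma(A)$ has no zero divisors. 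Since a domain is reduced and its spectrum is irreducible, this proves the first assertion. Moreover the multiplicative filtration $\{F_\sigma(\r')\}$ has a domain as associated graded, so it defines a valuation on $A$ constructed from the subset $\sigma \subseteq {\bf u}$, and $\pi^{-1}(c_\sigma)$ is precisely the degeneration of $X$ attached to it; for $\sigma = \emptyset$ one recovers $X$ itself, and for $\sigma = {\bf u}$ the point $c_\sigma$ is the origin.

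For the fiber over the origin, take $\sigma = {\bf u}$. For a standard-monomial basis element $b_\alpha \in \mathbb{B}$, the image of the standard monomial $\x^\alpha$, Propositions~\ref{prop-st-monomial-adapted-basis} and \ref{prop-v_w-v_u_i} give $\v_M(b_\alpha) = M\alpha = (\v_{\u_1}(b_\alpha), \dots, \v_{\u_r}(b_\alpha))$, so the homogeneous components $W_{\bf u}(\r)$ of $\gr_{\bf u}(A)$ coincide with those of $\gr_{\v_M}(A)$, and since in both cases the product is inherited from $A$ we get $\gr_{\bf u}(A) = \gr_{\v_M}(A)$; thus $\pi^{-1}(0) = \Spec(\gr_{\v_M}(A))$. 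If moreover $r = d = \dim(A)$, then Proposition~\ref{prop-val-from-cone}(4) — using that the $\u_i$ are linearly independent, so $\span({\bf u})$ is $d$-dimensional — shows that, over an algebraically closed $\k$, the valuation $\v_M$ has one-dimensional leaves; Proposition~\ref{prop-grA-semigp-algebra} then identifies $\gr_{\v_M}(A)$ with the semigroup algebra $\k[S(A, \v_M)]$. Since $\B$ is a finite Khovanskii basis for $\v_M$, the value semigroup $S(A, \v_M)$ is finitely generated (Lemma~\ref{lem-Khov-basis-value-semigp}), so $\Spec(\k[S(A, \v_M)])$ is a (possibly non-normal) affine toric variety, which completes the argument.

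I do not expect a genuine obstacle: the real content is packaged in Propositions~\ref{prop-Rees}, \ref{prop-Rees-fibers} and \ref{prop-adapted-basis-multiplicative}, and this corollary merely assembles them. The points that need care are the reduction to the finitely many fibers $\pi^{-1}(c_\sigma)$ via the $\Gm^r$-action and the identification of $\sigma = {\bf u}$ with the origin, and the remark that the toric conclusion in the case $r = d$ rests on one-dimensional leaves, hence — through Theorem~\ref{th-full-rank-1-dim-leaves} — on $\k$ being algebraically closed.
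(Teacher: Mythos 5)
Your proposal is correct and follows essentially the same route as the paper, which presents this corollary as an immediate consequence of Propositions \ref{prop-Rees}(2) and \ref{prop-Rees-fibers} together with the observation (via Proposition \ref{prop-adapted-basis-multiplicative}) that each $\gr_\sigma(A)$ is a domain; your expansion, including the reduction to the fibers $\pi^{-1}(c_\sigma)$ via the torus action and the identification $\gr_{\bf u}(A) = \gr_{\v_M}(A)$ (cf.\ Remark \ref{rem-comp-totalorder-vs-partialorder}), fills in exactly the intended details. Your caveat that the semigroup-algebra conclusion in the case $r=d$ relies on one-dimensional leaves, hence on $\k$ being algebraically closed, is a fair and accurate reading of the hypotheses.
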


\begin{remark}  \label{rem-comp-totalorder-vs-partialorder}
(1) In all the above constructions/definitions, instead of the partial order $>$ we can use a group ordering $\succ$ on $\Z^r$ which refines $>$. For example we can take $\succ$ to be a lexicographic order. It follows from Proposition \ref{prop-adapted-basis-multiplicative} that the resulting associated graded algebra $\gr_\succ(A)$ is the same as the associated graded algebra $\gr_{\bf u}(A)$ corresponding to the filtration by the $F_{\bf u}({\bf p})$.
Note that the associated graded $\gr_\succ(A)$ is in fact the associated graded $\gr_{\v_\w}(A)$ of the valuation $\v_\w$. 

(2) As far as the authors know, the construction of the Rees algebra associated to a valuation is due to B. Teissier (see \cite{Teissier} and in particular Proposition 2.2 in there which is very close to our Proposition \ref{prop-Rees}). Also \cite[Proposition 3]{Anderson} is a $1$-parameter version of \cite[Proposition 2.2]{Teissier}. 
 \end{remark}

\subsection{The compactification of $X$}\label{defcompact}
We can now construct the compactification $\bar{X}_{{\bf \u}, \delta}$ of $X$.
Since the Rees algebra $R_{\bf u}(A)$ is by definition $\Z^r$-graded, the scheme $E_{\bf u} = \Spec(R_{\bf u}(A))$ comes with a natural action of the torus $\Gm^r$. We define  $\bar{X}_{{\bf u}, \delta}$ to be the GIT quotient $E_{\bf u} \q_{\delta} \Gm^r$. Equivalently, from definition of the GIT quotient, we can realize this scheme as $\Proj$ of the $\Z_{\geq 0}$-graded subalgebra $$T_{{\bf u}, \delta}(A)= \bigoplus_{N \geq 0} F_{\bf u}(N\delta) {\bf t}^{-N\delta} \subset R_{\bf u}(A).$$  The affine scheme $E_{\bf u}$ is of finite type, so it follows that $\bar{X}_{{\bf u}, \delta}$ is projective. 

We define valuation $\tilde{\v}_\w: T_{{\bf u}, \delta}(A) \to \Z_{\geq 0} 
\times \Z_{\leq 0}^r$ as follows. For any $0 \neq f = \sum_{i=0}^m f_i{\bf t}^{-i\delta}$, we let: $$\tilde{\v}_\w(f) = (N, \v_\w(f_N)),$$ where $N = \MIN\{ i \mid f_i \neq 0\}$. Similarly, for any $\u \in C$ we can define valuation $\tilde{\v}_\u: T_{{\bf u}, \delta}(A) \to \Z_{\geq 0} \times \Q_{\leq 0}$. We observe that the value semigroup of $\tilde{\v}_\w$ is:
\begin{equation} \label{equ-S-tilde-v}
S(T_{{\bf u}, \delta}(A), \tilde{\v}_\w) = \{(N, {\bf p}) \mid {\bf p} \in S(A, \v_\w),~ {\bf p} \geq N\delta\}.
\end{equation}
Since $S(A, \v_\w)$ is finitely generated as a semigroup, generated by the columns of $M$, it follows that $S(T_{{\bf u}, \delta}(A))$ is also finitely generated.  

The algebra $T_{{\bf u}, \delta}(A)$ has a natural vector space basis $$\tilde{\mathbb{B}} = \{ b{\bf t}^{-N\delta} \mid b \in \mathbb{B}_{\bf q},\, {\bf q} \geq N\delta \}.$$ This basis is adapted to the valuations $\tilde{\v}_\w$ and 
$\tilde{\v}_\u$, for all $\u \in C$. 

Note that by definition the valuation $\tilde{\v}_\w$ is homogeneous with respect to the $\Z_{\geq 0}$-grading on $T_{{\bf u}, \delta}(A)$. Hence we can  consider the Newton-Okounkov body $\Delta_{{\bf u}, \delta} = \Delta(T_{{\bf u}, \delta}(A), \tilde{\v}_\w)$. By \eqref{equ-S-tilde-v} we have:
\begin{equation}  \label{equ-Delta-u}
\Delta_{{\bf u}, \delta} = \{ {\bf p} \in \R_{\leq 0}^r \mid {\bf p} \geq \delta \} \cap P(A, \v_\w),
\end{equation}
where $P(A, \v_\w)$ is the Newton-Okounkov cone of $(A, \v_\w)$, that is, the cone generated by the columns of the matrix $M$.

\begin{remark}(Toric degeneration of $\bar{X}_{{\bf u}, \delta}$)  
\label{rem-toric-degen-bar-X}
When $r = d$, the associated graded algebra of the valuation $\tilde{\v}_\w$ is isomorphic to the semigroup algebra $\k[S(T_{{\bf u}, \delta}(A), \tilde{\v}_\w)]$. Moreover, for any $\u \in C$, the associated graded algebra of $\tilde{\v}_\u$ is also isomorphic to this semigroup algebra. It follows that we have a toric degeneration of $\bar{X}_{{\bf u}, \delta}$ to the projective toric variety $\Proj(\k[S(T_{{\bf u}, \delta}(A), \tilde{\v}_\w)])$. The normalization of this toric variety is the toric variety associated to the polytope $\Delta_{{\bf u}, \delta}$.
\end{remark}

For the remainder of this section we assume that $r=d$. In this case, we construct an embedding of $X$ into a projective toric variety $Y_{{\bf u}, \delta}$ such that $\bar{X}_{{\bf u}, \delta}$ is the closure of $X$ in $Y_{{\bf u}, \delta}$. 

Let us define the semigroup:
$$\hat{S}_\delta = \{ (N, {\bf a}) \mid N \in \Z_{\geq 0}, {\bf a} \in \Z^n, M{\bf a} \geq N\delta \} \subset \Z_{\geq 0} \times \Z^n.$$
Recall that if ${\bf a} = (a_1, \ldots, a_n)$ then $M{\bf a} = \sum_i a_i\v_{\w}(b_i)$ and as before $\geq$ denotes the partial order on $\Z^r$ given by componentwise comparison of vectors. Note that from definition, $\hat{S}_\delta$ is a saturated semigroup. 

Let $Y_{{\bf u}, \delta} = \Proj(\k[\hat{S}_\delta])$ with respect to the grading by $N \in \Z_{\geq 0}$. It is the projective toric variety associated to the polytope 
$$\hat{\Delta}_{{\bf u}, \delta} = \{{\bf a} \mid M{\bf a} \geq \delta \} \subset \R_{\leq 0}^n.$$
In other words, $\hat{\Delta}_{{\bf u}, \delta}$ is the polytope defined by 
the inequalities ${\bf a} \leq 0$ and ${\bf a} \cdot u_i \geq \delta_i$ for all $i=1, \ldots, r$. Note that $0$ is a vertex of the polytope $\hat{\Delta}_{{\bf u}, \delta}$ and the cone at this vertex is the negative orthant. Thus we can consider the toric variety $Y_{{\bf u}, \delta}$ as a compactification of the affine space $\mathbb{A}^n$.

There is a natural homomorphism $\hat{\pi}: \k[\hat{S}_\delta] \to T_{{\bf u}, \delta}(A)$ which sends $(N, {\bf a})$ to $(\prod_i b_i^{a_i}) {\bf t}^{-N\delta}$, where ${\bf a} = (a_1, \ldots, a_n)$. One verifies that $\hat{\pi}$ is indeed surjective. 
We have the following proposition. We omit the straightforward proof. 
\begin{proposition}   \label{prop-toric-compact}
The scheme $Y_{{\bf u}, \delta}$ is the normal projective toric variety associated to $\hat{\Delta}_{{\bf u}, \delta}$, and is a compactification of $\mathbb{A}^n$.  Furthermore, the closure of $X \subset \mathbb{A}^n \subset Y_{{\bf u}, \delta}$ is $\bar{X}_{{\bf u}, \delta}$.
\end{proposition}

\subsection{The divisor at infinity $D_{{\bf u}, \delta}$}
{We let $D_{{\bf u}, \delta}$ be the divisor in $\bar{X}_{{\bf u}, \delta}$ defined by the ideal $I_{\bf u} = \langle {\bf t}^{-\delta} \rangle \cap T_{{\bf u}, \delta}(A)$.
(Here the ideal generated by ${\bf t}^{-\delta}$ means with respect to the $\k[{\bf t}]$-module structure on the Rees algebra $R_{{\bf u}, \delta}(A)$ given by map $\phi$ in Section \ref{subsec-Rees} after the definition of $R_{{\bf u}, \delta}(A)$.)}

To simplify the discussion, we assume that $\delta = (-1, \ldots, -1)$. Let us see that this is always possible. Since $\delta$ is in the relative interior of $P(A, \v_\w)$ we can find $v \in \Q_{\geq 0}^r$ such that $Mv = -\eta$.  Now let $u_i' = \frac{\eta_1\cdots \eta_d}{\eta_i}u_i$ and $v' = \frac{v}{\eta_1\cdots \eta_d}$. Now if $M'$ is the matrix whose rows are the $u_i'$, it easy to check that $M'v' = (-1, \ldots, -1)$.  

Under the assumption $\delta = (-1, \ldots, -1)$, we have that $D_{{\bf u}, \delta}$ is the divisor at infinity $\bar{X}_{{\bf u}, \delta} \setminus X$. The purpose of this section is to show that:
\begin{itemize}
\item[(1)] The divisor $D_{{\bf u}, \delta}$ has combinatorial normal crossings.
\item[(2)] The vectors $\u_i \in {\bf u}$ are in one-to-one correspondence with the irreducible components of  $D_{{\bf u}, \delta}$, and moreover, for every $\u_i \in {\bf u}$, the valuation $\v_{\u_i}$ is given by the order of vanishing along the corresponding irreducible component of $D_{{\bf u}, \delta}$.
\end{itemize}

Before we proceed with the proofs, we need few more definitions. Let $\sigma \subset {\bf u} = \{u_1, \ldots, u_r\}$. To $\sigma$ we associate the ideal $I_\sigma \subset T_{{\bf u}, \delta}(A)$ defined by:
$$I_\sigma = \langle t_i \mid u_i \in \sigma \rangle \cap T_{{\bf u}, \delta}(A).$$ We note that since for every $i$, $\langle t_i \mid u_i \in {\bf u} \rangle \subset A[{\bf t}]$ is a prime ideal, the ideal $I_\sigma \subset T_{{\bf u}, \delta}(A)$ is also prime. In particular, for every $i$, we let $I_i = \langle t_i \rangle \cap T_{{\bf u}, \delta}(A)$ and we denote by $D_i$ the divisor in $\bar{X}_{{\bf u}, \delta}$ defined by the ideal $I_i$.

\begin{proposition}   \label{prop-div-at-infinity}
$D_{{\bf u}, \delta} = \sum_i D_i$ is a divisor in $\bar{X}_{{\bf u}, \delta}$ with combinatorial normal crossings.
\end{proposition}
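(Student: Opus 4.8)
The plan is to verify the defining property of a combinatorial normal crossings divisor (\cite{Sturmfels-Tevelev}): that $D_1,\ldots,D_r$ are pairwise distinct reduced irreducible divisors on $\bar{X}_{{\bf u},\delta}$, and that for every $\sigma\subseteq\{1,\ldots,r\}$ the intersection $D_\sigma:=\bigcap_{i\in\sigma}D_i$ is either empty or of pure codimension $|\sigma|$. Since $I_\sigma=\langle t_i\mid u_i\in\sigma\rangle\cap T_{{\bf u},\delta}(A)$ is prime (as observed above), each $D_\sigma=V(I_\sigma)$ is irreducible and the $D_i$ are distinct, so the whole statement reduces to the single dimension count $\codim_{\bar{X}_{{\bf u},\delta}}(D_\sigma)=|\sigma|$ together with $D_\sigma\neq\emptyset$. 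I will also record at the outset that $\dim\bar{X}_{{\bf u},\delta}=d$: the Rees algebra $R_{\bf u}(A)$ is a domain with the same fraction field as $A[{\bf t}^\pm]$, so $\dim E_{\bf u}=d+r$, and by Proposition \ref{prop-Rees}(3) and the construction $\bar{X}_{{\bf u},\delta}=E_{\bf u}\q_\delta\Gm^r$, $X$ is a dense open subset of $\bar{X}_{{\bf u},\delta}$, whence $\dim\bar{X}_{{\bf u},\delta}=\dim X=d$.

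The key step is to understand $Z_\sigma:=V(\phi(t_i)\mid u_i\in\sigma)\subseteq E_{\bf u}$. Carrying out the computation of the proof of Proposition \ref{prop-Rees-fibers}, but killing only the variables $t_i$ with $u_i\in\sigma$ and leaving $t_j$ with $u_j\notin\sigma$ free, I would identify the coordinate ring $R_{\bf u}(A)/\langle\phi(t_i)\mid u_i\in\sigma\rangle$ with the Rees algebra of $\gr_\sigma(A)$ formed, exactly as in Section \ref{subsec-Rees}, from the valuations $\v_{\u_j}$, $u_j\notin\sigma$. This uses that the adapted basis $\mathbb{B}$ is simultaneously adapted to all the $\v_{\u_i}$ and behaves multiplicatively (Propositions \ref{prop-st-monomial-adapted-basis} and \ref{prop-adapted-basis-multiplicative}), so that the graded pieces of $A$ split compatibly and no collapse occurs. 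Since $\gr_\sigma(A)$ is a domain (Proposition \ref{prop-adapted-basis-multiplicative}) of dimension $d$ (being an associated graded of a valuation, or using $\gr_\sigma(A)\cong\k[\x]/\In_C(I)$ from Proposition \ref{prop-val-from-cone}), this Rees algebra is a domain of Krull dimension $d+(r-|\sigma|)$. Hence $Z_\sigma$ is irreducible, cut out by $|\sigma|$ equations, of dimension $d+r-|\sigma|$.

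Finally I would pass to the GIT quotient. Because each $\phi(t_i)$ is $\Z^r$-homogeneous, its $\mathbf{N}\delta$-graded part recovers $I_i$, so $D_\sigma=V(I_\sigma)$ is the image of $Z_\sigma$ under $E_{\bf u}^{ss}(\delta)\to\bar{X}_{{\bf u},\delta}$, i.e. $D_\sigma=Z_\sigma\q_\delta\Gm^r$. The induced $\Gm^r$-action on $Z_\sigma\cong\Spec$ of the above Rees algebra has $r$-dimensional generic orbits: on the open locus $X_\sigma\times\Gm^{\sigma^c}$ (where the $t_j$, $u_j\notin\sigma$, are invertible), $\Gm^{\sigma^c}$ acts freely on the second factor, while $\Gm^\sigma$ acts on $X_\sigma=\Spec(\gr_\sigma(A))$ through the $\Z^\sigma$-grading, whose grading subgroup has full rank $|\sigma|$ — indeed, writing $M_\sigma$ for the submatrix of $M$ with rows $\u_i$, $u_i\in\sigma$, and $\v_\sigma=\v_{M_\sigma}$, the semigroup $S(A,\v_\sigma)$ is generated by the columns of $M_\sigma$ (Proposition \ref{prop-val-from-cone}(3)), and these columns span a rank-$|\sigma|$ lattice since $\rank(M_\sigma)=|\sigma|$. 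Therefore $\dim D_\sigma=\dim Z_\sigma-r=d-|\sigma|$, giving $\codim_{\bar{X}_{{\bf u},\delta}}(D_\sigma)=|\sigma|$; and $D_\sigma\neq\emptyset$ because $\delta$ lies in the relative interior of $P(A,\v_\w)$, hence its restriction lies in the interior of the Newton--Okounkov cone of $\gr_\sigma(A)$, so $Z_\sigma$ meets the $\delta$-semistable locus.

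The hard part will be the identification in the second paragraph: showing that setting $t_i=0$ for $u_i\in\sigma$ in $R_{\bf u}(A)$ produces \emph{exactly} the iterated construction ``associated graded with respect to $\v_\sigma$, then Rees with respect to the remaining $\u_j$,'' with every graded piece accounted for. This is precisely where the multiplicative adapted basis $\mathbb{B}$ does the real work; a secondary technical point is to confirm that $\langle\phi(t_i)\mid u_i\in\sigma\rangle$ is genuinely $\Z^r$-homogeneous so that the GIT descent $D_\sigma=Z_\sigma\q_\delta\Gm^r$ is legitimate. Everything else is a dimension count built on Propositions \ref{prop-Rees}, \ref{prop-Rees-fibers}, \ref{cor-toric-degen} and \ref{prop-val-from-cone}.
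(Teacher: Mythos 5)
Your proposal is correct, and it runs on the same engine as the paper's proof: the Rees-algebra fiber description plus a GIT dimension count exploiting that $\delta$ lies in the relative interior of the weight cone. The organizational difference is worth pointing out, because it lets you skip what you call the hard part. The paper does not compute $\codim D_\sigma$ for every $\sigma$: since the $I_\sigma$ are distinct primes, the chain of irreducible closed subsets $\bar{X}_{{\bf u},\delta}\supsetneq D_{i_1}\supsetneq D_{\{i_1,i_2\}}\supsetneq\cdots\supsetneq D_{\bf u}$ has strictly decreasing dimensions, so once the deepest stratum $D_{\bf u}$ is shown to have codimension exactly $r$, every intermediate $D_\sigma$ is forced to have codimension $|\sigma|$. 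For $\sigma={\bf u}$ the subscheme $Z_{\bf u}$ is exactly the fiber $\pi^{-1}(0)$, already identified with $\Spec(\gr_{\v_M}(A))$ in Proposition \ref{prop-Rees-fibers}, and the dimension of its $\delta$-quotient is supplied by a self-contained GIT lemma (Lemma \ref{lem-GIT}): a finitely generated domain of dimension $d$ with a $\Gm^r$-action whose weight cone is full-dimensional, and $\delta$ interior to that cone, has quotient of dimension $d-r$ (one checks the generic stabilizer is finite, the relevant orbits are closed, and stable points exist). This bypasses entirely the identification of $R_{\bf u}(A)/\langle\phi(t_i)\mid u_i\in\sigma\rangle$ with the iterated Rees algebra of $\gr_\sigma(A)$ for proper $\sigma$ — which is true, and provable exactly as you indicate from the multiplicative adapted basis, but is extra work. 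Your argument that generic $\Gm^r$-orbits on $Z_\sigma$ are $r$-dimensional is essentially the content of Lemma \ref{lem-GIT}; if you keep your stratum-by-stratum route, you should also verify (as that lemma does) that the generic orbits in question are closed in the semistable locus, so that the orbit-dimension count actually computes the dimension of the GIT quotient.
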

\begin{proof}
It is straightforward to check that the ideals $I_\sigma = \langle t_i \mid u_i \in \sigma \rangle \cap T_{{\bf u}, \delta}(A)$ are distinct and prime. Thus it suffices to show that the subscheme in $\bar{X}_{{\bf u}, \delta}$ defined by the largest ideal $I_{\bf u}$ has the correct codimension equal to $r$. The subscheme in $E_{\bf u}= \Spec(R_{\bf u}(A))$ defined by the ideal $\langle t_i \mid i=1, \ldots, r \rangle$ is the fiber $\pi^{-1}(0)$ over $0$ of the family $\pi: E_{\bf u} \to \mathbb{A}^r$. By Proposition \ref{prop-Rees-fibers}, the fiber $\pi^{-1}(0)$ is isomorphic to $\Spec(\gr_{\v_M}(A)$ of the valuation $\v_M$. We also know that the cone generated by the $\Gm^r$-weights of the algebra $\gr_{\v_M}(A)$ is the Newton-Okounkov cone $P(A, \v_M)$. This cone has dimension $r = \rank(M)$ which is equal to dimension of the prime cone $C$ we started with. Now since the fiber $\pi^{-1}(0)$ is stable under the $\Gm^r$-action on $E_u$, we conclude that the subscheme in $\bar{X}_{{\bf u}, \delta}$ defined by $I_{\bf u}$ is isomorphic to the GIT quotient $\pi^{-1}(0) //_\delta \, \Gm^r$. The claim now follows from the following lemma from geometric invariant theory. 
\begin{lemma}  \label{lem-GIT}
Let $R$ be a finitely generated algebra with Krull dimension $d$ and equipped with a rational $\Gm^r$-action where $r \leq d$. Suppose the cone $C(R) \subset \Q^r$ generated by the weights of the $\Gm^r$-action has maximal dimension $r$. Let $\delta$ be a weight which lies in the interior of $C(R)$. Then the GIT quotient $\Spec(R) //_\delta\, \Gm^r$ has dimension $d - r$.  
\end{lemma}
\end{proof}

\begin{corollary}   \label{cor-div-at-infinity-ord-vanishing}
For every $\u_i \in {\bf u}$ the valuation $\v_{\u_i}$ is given by the order of vanishing along the divisor $D_i$. That is, for any $0 \neq f \in A$ the value 
$\v_{\u_i}(f)$ is equal to order of zero/pole of $f$ along the divisor $D_i$.
\end{corollary}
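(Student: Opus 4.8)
The plan is to reduce the whole statement to a single local computation at the generic point $\eta_i$ of $D_i$, using the Rees-algebra description of $\bar X_{{\bf u},\delta}$ from Sections \ref{subsec-Rees}--\ref{defcompact}. Both $\v_{\u_i}$ and $\ord_{D_i}$ are discrete $\Z$-valued valuations on the quotient field $K$ of $A$ which are additive on products, so it suffices to prove $\ord_{D_i}(f)=\v_{\u_i}(f)$ for $0\neq f\in A$ and then extend to $K$ by writing $f=g/h$ with $g,h\in A$. First I would restrict to the open set $U_i=\bar X_{{\bf u},\delta}\setminus\bigcup_{j\neq i}D_j\supset X$, whose only boundary component is $D_i$, and unwind the construction there: inverting the parameters $t_j$ ($j\neq i$) in $R_{\bf u}(A)$ and passing to the geometric quotient by the subtorus of $\Gm^r$ that fixes $t_i$ collapses the multi-parameter Rees algebra to the one-parameter Rees algebra $R_i(A)=\bigoplus_{m\leq 0}F_{\u_i}(m)\,t_i^{-m}$ attached to the rank-one valuation $\v_{\u_i}$, where $F_{\u_i}(m)=\{g\in A:\v_{\u_i}(g)\succeq m\}\cup\{0\}$. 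This collapse is clean precisely because, by Proposition \ref{prop-st-monomial-adapted-basis}, the adapted basis $\mathbb{B}$ of Proposition \ref{primeconeadapted} is simultaneously adapted to each $\v_{\u_i}$. In this model $U_i$ is covered by the standard affine charts of a $\Proj$ of $R_i(A)$, with $X$ the non-vanishing locus of $t_i$ and $D_i\cap U_i=V(t_i)$.

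Next I would identify $\mathcal{O}_{\bar X_{{\bf u},\delta},\eta_i}$ and compute $\ord_{D_i}$ on $A$. By Proposition \ref{prop-Rees-fibers} applied to $\sigma=\{\u_i\}$, together with Corollary \ref{cor-toric-degen}, the boundary fiber $V(t_i)$ is reduced and irreducible, isomorphic to $\Spec(\gr_{\v_{\u_i}}(A))$. Hence the ideal $(t_i)$ is prime, locally at $\eta_i$ it is the full defining ideal of $D_i$, so $\mathfrak{m}_{\eta_i}=(t_i)$; since $D_i$ has codimension one (Proposition \ref{prop-div-at-infinity}), $\mathcal{O}_{\bar X,\eta_i}$ is a one-dimensional Noetherian local domain with principal maximal ideal, i.e.\ a DVR, with $\ord_{D_i}(t_i)=1$. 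Now fix $0\neq f\in A$ and put $m=\v_{\u_i}(f)\preceq 0$. Then $f\,t_i^{-m}\in R_i(A)$, and its reduction modulo $t_i$ is the $\v_{\u_i}$-leading form $\bar f\in\gr_{\v_{\u_i}}(A)$, which is nonzero; as $\gr_{\v_{\u_i}}(A)$ is a domain, $\bar f\neq 0$ forces $f\,t_i^{-m}$ to be a unit of $\mathcal{O}_{\bar X,\eta_i}$. Therefore
\begin{equation*}
\ord_{D_i}(f)=\ord_{\eta_i}\!\big(f\,t_i^{-m}\big)+m\cdot\ord_{\eta_i}(t_i)=0+m=\v_{\u_i}(f),
\end{equation*}
which, together with the multiplicativity reduction above, gives the corollary.

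I expect the main obstacle to be controlling the passage through the GIT quotient $E_{\bf u}^{\mathrm{ss}}\to\bar X_{{\bf u},\delta}$ along $D_i$: the identifications in the first paragraph implicitly require that the relevant $\Gm$-factor act freely near $\eta_i$, equivalently that this quotient map be unramified there, so that $t_i$ descends to a genuine uniformizer rather than a uniformizer of a finite cover. The generic stabilizer along $D_i$ is the torsion subgroup of the value group of $\v_{\u_i}$, so this holds once the $\u_i$ are normalized (e.g.\ scaled to be primitive lattice vectors) to make $\v_{\u_i}$ surjective onto $\Z$; carrying out this normalization compatibly with the integrality assumption on $S(A,\v_\w)$ and checking that the one-parameter Rees model really computes $\mathcal{O}_{\bar X,\eta_i}$ is the delicate bookkeeping point. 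The essential input that makes the exact equality $\ord_{D_i}(t_i)=1$ --- rather than merely a proportionality $\ord_{D_i}=c\,\v_{\u_i}$ --- come out correctly is the reducedness of the boundary fiber from Corollary \ref{cor-toric-degen}.
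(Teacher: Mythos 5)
Your argument is correct and is essentially the paper's own proof: both identify $t_i$ (descended through the GIT quotient) as a local equation for $D_i$ at its generic point, and both compute $\ord_{D_i}(f)$ by writing $f = \bigl(f\,t_i^{-m}\bigr)\,t_i^{m}$ with $m=\v_{\u_i}(f)$ and observing that the nonvanishing of the leading form of $f$ in the domain $\gr_{\v_{\u_i}}(A)$ makes $f\,t_i^{-m}$ a unit at the generic point of $D_i$. The ``delicate bookkeeping point'' you flag --- that $t_i$ descends to a genuine uniformizer on the quotient --- is exactly the step the paper handles by exhibiting the invariant $t_1\cdots t_r\chi$ as a generator of the maximal ideal in the local ring at $I_i\subset T_{{\bf u},\delta}(A)$.
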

\begin{proof}
Let $\chi$ be a character of $\Gm^r$ of weight $-\delta$, so that $T_{{\bf u}, \delta} = [R_{\bf u}(A) \otimes \k[\chi]]^{\Gm^r}$.  The the ideal $\langle t_i \rangle \subset R_{\bf u}(A)\otimes k[\chi]$ is $\Gm^r$-stable and the unique maximal ideal in the local ring at this ideal can be generated by the invariant $t_1\cdots t_r\chi$ for all $i$.  As a consequence it generates the maximal ideal in the local ring at $I_i \subset T_{{\bf u}, \delta}(A)$, and it follows that the $D_i$ degree of any regular function $f \in A$ on $X$ can be computed by taking the $t_i$-degree, as $f = \frac{\bar{f}}{\prod_j t_j^{\v_{\u_j}(f)}} \in A \subset \frac{1}{t_1\cdots t_r}R_{\bf u}(A)$ for $\bar{f} = f\prod_j t_j^{\v_{\u_j}(f)} \in R_{\bf u}(A)$. We obtain that this degree is $\v_{u_i}(f)$.
\end{proof}

\section{Examples} \label{sec-examples}
\begin{example}[The wonderful compactification of an adjoint group $G$]   \label{ex-wonderful-comp}
Let $G$ be an adjoint form of a semisimple algebraic group over an algebraically closed characteristic $0$ field $\k$. We show that the wonderful compactification $\overline{G}$ can be realized by means of the compactification construction outlined in Section \ref{defcompact}. 

We pick a system of simple roots $\alpha_1, \ldots, \alpha_r$, these generate the root lattice $\mathcal{R}$.  Let $h_1, \ldots, h_r \in \mathfrak{h}$ be the corresponding coroots, recall that these pair with the weights $\Lambda$ such that $\omega_i(h_j) = \delta_{ij}$ for the fundamental weights $\omega_i$.   The $\omega_1, \ldots, \omega_r$ generate the monoid of dominant weights $\Lambda_+ \subset \Lambda$.  Finally, let $\hat{\alpha}_1, \ldots, \hat{\alpha}_r$ be the fundamental coweights; these have the property that $\alpha_j(\hat{\alpha}_i) = \delta_{ij}$.  In particular if $\omega \prec \eta$ in the dominant weight ordering, we must have $(\eta - \omega)(\hat{\alpha}_i) \geq 0$ for each $1 \leq i \leq r$. 

 The coordinate ring $\k[G]$ is known to have the following direct sum decomposition:

\begin{equation}
\k[G] = \bigoplus_{\lambda \in \Lambda_+} \End(V(\lambda)).
\end{equation}
Here $V(\lambda)$ is the irreducible representation of $G$ associated to  $\lambda \in \Lambda_+$.  Each fundamental coweight defines a rank $1$ $(G\times G)$-invariant valuation $v_i: \k[G]\setminus \{0\} \to \Z$, where the filtration defined by $v_i$ is by the subspaces $F^i_{\leq m} = \bigoplus_{\eta(\hat{\alpha}_i) \leq m} \End(V(\eta)) \subset \k[G]$. Let $R(G)$ denote the Rees algebra associated to the $v_i$:
\begin{equation}
R(G) = \bigoplus_{{\bf p} \in \Z^r_{\leq 0}} \bigoplus_{\lambda(\hat{\alpha}_i) \leq p_i}  \End(V(\lambda)){\bf t}^{-{\bf p}}.
\end{equation}

\begin{remark}
This Rees algebra has been considered by Popov \cite{Popov} in the context of the \emph{horospherical contraction} of a $G$ variety. 
\end{remark}

We select a weight $\rho$ with the property that $\rho(h_i) > 0$, and we let $\delta = (\rho(\hat{\alpha}_1), \ldots, \rho(\hat{\alpha}_r))$.  Finally, we define $T_{\delta}(G)$ as in Section \ref{defcompact}:
\begin{equation}
T_{\delta}(G) = \bigoplus_{N} \bigoplus_{\lambda(\hat{\alpha}_i) \leq N\delta_i} \End(V(\lambda))t^N.
\end{equation}

  There is an ample $(G\times G)$-line bundle $\mathcal{L}_{\rho}$ on the wonderful compactification $\overline{G}$ corresponding to $\rho$.  Global sections of $\mathcal{L}_{\rho}$ have the following description:

\begin{equation}
H^0(\overline{G}, \mathcal{L}_{\rho}^{\otimes N}) = \bigoplus_{\lambda \prec N\rho} \End(V(\lambda)).\\
\end{equation} 

We claim that $H^0(\bar{G}, \mathcal{L}_{\rho}^{\otimes N}) = \bigoplus_{\lambda(\hat{\alpha}_i) \leq N\delta_i} \End(V(\lambda))t^N.$  For $\lambda \prec N\rho$ we must have $N\rho - \lambda = \sum n_i\alpha_i$ for $n_i \in \Z_{\geq 0}$; it follows that $H^0(\bar{G}, \mathcal{L}_{\rho}^{\otimes N}) \subset \bigoplus_{\lambda(\hat{\alpha}_i) \leq N\delta_i} \End(V(\lambda))t^N$.  For the other inclusion, suppose more generally that $(\eta - \lambda)(\hat{\alpha}_i) \geq 0$ for all $i$.  The group $G$ is adjoint and hence $\mathcal{R} = \Lambda$, so it follows that $\eta - \lambda = \sum n_i \alpha_i$ for $n_i \in \Z$.  If any of these coefficients were negative, the corresponding $\hat{\alpha}_i$ would likewise evaluate to a negative integer.  Consequently, we must have $\overline{G} = \Proj(T_{\delta}(G))$. 
\end{example}

\begin{example}\label{GZpatternexample}[Gel'fand-Zetlin patterns and the Pl\"ucker algebra]
Let $\k \subset F$ be a transcendental field extension, and $\v: F\setminus \{0\} \to \Z^d$ a valuation of rank equal to the transcendence degree of $F$ over $\k$.  It is natural to ask when a finite subset $\mathcal{B} \subset F$ is a Khovanskii basis for the $\k$-algebra $\k[\mathcal{B}] \subset F$ generated by $\mathcal{B}$, with respect to $\v$.   Let $\mathcal{X}$ be an $n \times n$ array of indeterminates $x_{ij}$, and let $\k(\mathcal{X})$ be the quotient field of the polynomial algebra $\k[\mathcal{X}]$.   A rank $n^2$ valuation $\v$ can be defined on $\k(\mathcal{X})$ by ordering the monomials in $\k[\mathcal{X}]$ lexicographically using the row-wise ordering of the entries of $\mathcal{X}$:

\begin{equation}
x_{11} > \ldots > x_{1n} > x_{21} > \ldots > x_{2n} > \ldots > x_{nn}.\\
\end{equation}

Let $\sigma \subset \{1, \ldots, n\}$ be an ordered subset, and let $p_{\sigma} \in \k(\mathcal{X})$ be the form obtained by taking the determinant of the $|\sigma| \times |\sigma|$ minor of $\mathcal{X}_{\sigma}$ composed of the $x_{ij}$ with $1 \leq i \leq |\sigma|$ and $j \in \sigma$. Let $\mathcal{P}$ denote the set of all the $p_\sigma$. The algebra $\k[\mathcal{P}] \subset \k(\mathcal{X})$ is known as the Pl\"ucker algebra.  It is the coordinate ring of the quotient variety $\GL_n(\k)/U$, where $U \subset \GL_n(\k)$ is the unipotent group of upper triangular matrices with $1$'s on the diagonal, 
and also it can be identified with the total coordinate ring of the full flag variety 
$\mathcal{F\ell}_n$.   

The valuation $\v$ induces a maximal rank valuation on $\k[\mathcal{P}]$ with Khovanskii basis $\mathcal{P}$, see \cite[Theorem 14.11]{Miller-Sturmfels}.  It is also known 
(see e.g. \cite[Theorem 14.23]{Miller-Sturmfels}) that the associated graded algebra $\gr_{\v}(\k[\mathcal{P}])$ is isomorphic to affine semigroup algebra of the Gel'fand-Zetlin pattern semigroup $\textup{GZ}_n \subset \Z_{\geq 0}^{\binom{n+1}{2}}$.  An element $w \in \textup{GZ}_n$ is a triangular array of $\binom{n+1}{2}$ non-negative integers $w_{i,j}$ $1 \leq i \leq n$, $1 \leq j \leq n+1 - i$, organized into $n$ rows of lengths $n, n-1, \ldots, 1$.  These entries satisfy the inequalities $w_{i,j} \geq w_{i-1,j} \geq w_{i,j-1}$. 

The ideal $I$ of polynomial relations among the $p_{\sigma}$ is generated by a set of quadratic polynomials $G$ which is best described combinatorially; we follow the presentation in \cite[Theorem 14.6]{Miller-Sturmfels}.   There is a partial order on the $\sigma \subset \{1, \ldots, n\}$, where $\sigma \prec \tau$ if $|\sigma| \leq |\tau|$ and 
$\sigma_i \leq \tau_i$ for all $1 \leq i \leq |\sigma|$ (here $\sigma_i$ and $\tau_i$ denote the $i$-th elements in increasing order in $\sigma$ and $\tau$ respectively). If $\sigma$ and $\tau$ are incomparable with respect to $\succ$ and $s = |\sigma| \geq |\tau| = t$ there is some index $j$ with $\sigma_j > \tau_j$.  Let $g_{\sigma, \tau}$ be the following polynomial, where the sum is taken over all permutations $\mathcal{S}_{\sigma,\tau}$ of the $s+1 $ indices $\tau_1, \ldots, \tau_j, \sigma_j, \ldots, \sigma_s$:

\begin{equation}
g_{\sigma, \tau} = \sum_{\pi \in \mathcal{S}_{\sigma,\tau}} \textup{sign}(\pi)p_{\pi(\sigma)}p_{\pi(\tau)}.
\end{equation}

The valuation $\v$ induces a partial ordering on the monomials in the variables $\mathcal{P}$, this partial ordering can be completed to a monomial ordering by first ordering with $\v$ and then ordering with the reverse lexicographic ordering induced by the total ordering on the $\sigma$ where $\sigma < \tau$ if $|\sigma| > |\tau|$ or $|\sigma| = |\tau|$ and $\sigma$ comes before $\tau$ in the lexicographic ordering on subsets of $\{1, \ldots, n\}$. We call this concatenated ordering $>_{\v}$.  We have made this choice so that \cite[Theorem 14.6]{Miller-Sturmfels} implies that $G$ is a Gr\"obner basis with respect to $>_{\v}$ and $\In_{>}(\In_{\v}(g_{\sigma\tau})) = \In_{>}(g_{\sigma\tau}) = p_{\sigma}p_{\tau}$ for each $g_{\sigma\tau} \in G$. 

Now, following \cite[Theorem 14.16]{Miller-Sturmfels}, we consider the reduced Gr\"obner basis $G^{red}$ associated to the $G$. 
The partial order $\succ$ defines a lattice on the $\sigma$, let $\wedge$ and $\vee$ be the meet and join in this lattice. The initial ideal $\In_{\v}(I)$ is generated by the binomial initial forms of the members of $G^{red}$:

\begin{equation}
p_{\sigma}p_{\tau} - p_{\sigma\wedge \tau}p_{\sigma \vee \tau},\\
\end{equation}

\noindent 
Here $\sigma$ and $\tau$ are incomparable under $\succ$.  Proposition \ref{prop-prime-cone-from-subductive-val} then implies the following. 

\begin{proposition}
Let $C_{\textup{GZ}}$ be the cone of weights $\u \in \Q^{2^{n-1}}$ in the Gr\"obner fan of $I$ defined by the inequalities implied by the following:

\begin{equation}
\In_{\u}(h_{\sigma\tau}) = p_{\sigma}p_{\tau} - p_{\sigma\wedge \tau}p_{\sigma \vee \tau}, \ \ h_{\sigma\tau} \in G^{red},\\
\end{equation}

\noindent

For any $\u \in C_{\textup{GZ}}^\circ$, $\In_{\u}(I) = \In_{\v}(I)$, so $C_{\textup{GZ}}$ is a prime cone in the tropical variety of $I$.
\end{proposition}

Finding explicit inequalities describing the prime cone $C_{\textup{GZ}}$ is more involved, an answer to this problem can be found in \cite[Theorem 6.2]{Makhlin}.

\end{example}

By Proposition \ref{prop-S_v-S_v_w-lin-iso}, the $\Z_{\geq 0}$ column span of any matrix $M_{\textup{GZ}}$ with rank equal to the dimension of $\k[\mathcal{P}]$ and rows taken from $C^\circ_{\textup{GZ}}$ is isomorphic to the Gel'fand-Zetlin patterns as a semigroup.  

\begin{example}[$\Gr_3(\C^6)$]
We show that Algorithm \ref{algo-find-Khovanskii-basis} can find a Khovanskii basis of the $\Gr_3(\C^6)$ Pl\"ucker algebra for a valuation defined in \cite{M-NOK} using the  Pl\"ucker generators as input.

In \cite{M-NOK}, the second author defines a family of maximal rank valuations on the coordinate ring $\C[P_n(\SL_3(\C))]$ of the configuration space $P_n(\SL_3(\C)) = \SL_3(\C) \ql [\SL_3(\C)/U]^n$.  Here the right quotients are by $U \subset \SL_3(\C)$, a maximal unipotent subgroup, and the left quotient is the geometric invariant theory quotient by the diagonal action of $\SL_3(\C)$. The projective coordinate ring $\C[\Gr_3(\C^n)]$ of the Grassmannian variety of $3$-planes in $\C^n$ with respect to its Pl\"ucker embedding is naturally realized as a subalgebra of $\C[P_n(\SL_3(\C))]$, and therefore inherits these maximal rank valuations. 

For a particular selection of combinatorial parameters, the value semigroup of one of these valuations $\v: \C[\Gr_3(\C^6)] \to \Z^{36}$ is a sub-semigroup $\textup{BZ}_{\T}(3, 6)$ of the Berenstein-Zelevinsky quilts for a particular choice of trivalent $6$-leaf tree $\T$.  This semigroup is studied in \cite{MZ}, and examples of members of this semigroup are depicted in Figure \ref{quiltexample}.  

 By \cite{MZ}, $\textup{BZ}_{\T}(3, 6)$ is generated by the images $\v(p_{ijk})$ of the $\binom{6}{3}$ Pl\"ucker coordinate functions, and the image $\v(T)$ of one additional coordinate function $T$.  Furthermore, it can be shown that the image of $T$ (in the associated graded) is not expressible as a monomial in the images of the Pl\"ucker generators. However, $T$ itself can be written as a binomial in the Pl\"ucker generators:
 
\begin{equation}\label{binomialinvariant}
T = p_{135}p_{246}- p_{235}p_{146}.
\end{equation}

It can be shown that the following equation holds in the value semigroup $\textup{BZ}_{\T}(3, 6)$:
\begin{equation}
\v(p_{135}) + \v(p_{246}) = \v(p_{235}) + \v(p_{146}).
\end{equation}

It follows that $T$, and therefore a Khovanskii basis for $\C[\Gr_3(\C^6)]$ with respect to $\v$, can be found with one application of Algorithm \ref{algo-find-Khovanskii-basis} acting on the Pl\"ucker generators. 

\begin{figure}[htbp]
\centering
\includegraphics[scale = 0.5]{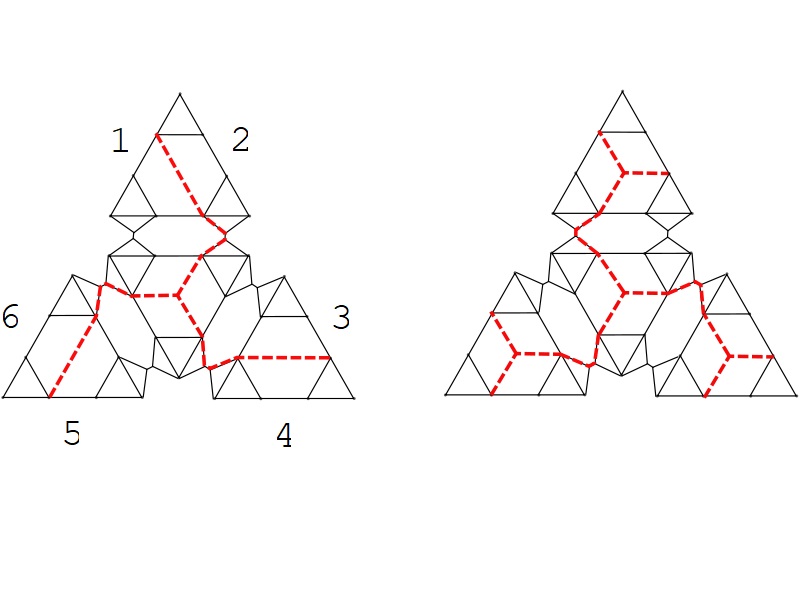}
\caption{Left: $\v(p_{135})$. Right: $\v(T)$.}
\label{quiltexample}
\end{figure}
\end{example}

\begin{example}[The trace algebra on two letters]
The set of representations $\mathcal{X}(F_2, \SL_2(\C))$ of the rank $2$ free group $F_2$ into the complex group $\SL_2(\C)$ can be given the structure of an irreducible complex variety. This variety can be constructed as the image of $\SL_2(\C)\times \SL_2(\C)$ under the polynomial map which sends a pair $(M, N) \in \SL_2(\C)\times \SL_2(\C)$ to the $3$-tuple of traces $t_1 = \tr(M), t_2 = \tr(N), x_1 = \tr(MN)$; this identifies $\mathcal{X}(F_2, \SL_2(\C))$ with $\C^3$, recovering the Fricke-Vogt Theorem (for statement of this theorem see for example \cite{Fricke-Vogt}).  By incorporating an additional trace parameter $x_2 = \tr(MN^{-1})$, $\mathcal{X}(F_2, \SL_2(\C))$ can be identified with the hypersurface of solutions to $x_1 + x_2 - t_1t_2 = 0$ in $\C^4$. 

The tropical variety $\T\subset \Q^4$ of this hypersurface has a $2$ dimensional lineality space $L$ spanned by the vectors $(-1, -1, 0, -1)$ and $(-1, -1, -1, 0)$.  There are three maximal cones of $\T$, each obtained by adding an additional vector to the lineality space:

\begin{eqnarray*}
C_1 &=& \Q_{\geq 0}\{L, (-2, -2, 0, 0) \},~  \In_{C_1}(x_1 + x_2 - t_1t_2) = x_1 + x_2, \\
C_2 &=& \Q_{\geq 0}\{L, (0, 2, -1, -1) \},~  \In_{C_2}(x_1 + x_2 - t_1t_2) = x_1 - t_1t_2, \\
C_3 &=& \Q_{\geq 0}\{L, (2, 0, -1, -1) \},~  \In_{C_3}(x_1 + x_2 - t_1t_2) = x_2 - t_1t_2.\\
\end{eqnarray*}

Notably, each of the resulting initial forms of $x_1 + x_2 - t_1t_2$ is irreducible, it follows that $C_1, C_2,$ and $C_3$ are all prime cones.  Taking for a moment the prime cone $C_1$, Theorem \ref{th-intro-main2} implies that the weighting assigning $x_1, x_2, t_1, t_2$ the columns of the following matrix defines a rank $3$ valuation $\v_1: \C[\mathcal{X}(F_2, \SL_2(\C))] \to \Z^3$:

\begin{equation}
M_1 = \left[ \begin{array}{cccc}
-1  & -1 & -1 & 0 \\
-1  & -1 & 0 & -1 \\
-2 & -2 & 0 & 0 \end{array} \right]
\end{equation}

In \cite{M-NOK}, the second author describes a construction of a maximal rank valuation on the representation space $\mathcal{X}(F_g,G)$ for a free group of arbitrary rank and $G$ any connected complex reductive group.  For the $g = 2, G = \SL_2(\C)$ case, the necessary input of this construction is a trivalent graph $\Gamma$ with first Betti number $\beta_1$ equal to $2$, a choice of spanning tree in $\Gamma$, an orientation on the edges not in the chosen spanning tree, and a total ordering on the edges of $\Gamma$, see Figure \ref{3graph}.  The valuation $\v_1$ is constructed in \cite{M-NOK} as the maximal rank valuation associated to the leftmost graph in Figure \ref{3graph}, in particular each row corresponds to an edge in this graph, and the total ordering on edges can be interpreted as a total ordering on rows of $M_1$ (inducing a lexicographic ordering on standard monomials).  Matrices $M_2$ and $M_3$ can be constructed similarly for the cones $C_2$ and $C_3$, producing valuations $\v_2$ and $\v_3$; these valuations were constructed in \cite{M-NOK} in association with the middle and rightmost graphs in Figure \ref{3graph}. 

\begin{figure}[htbp]
\centering
\includegraphics[scale = 0.5]{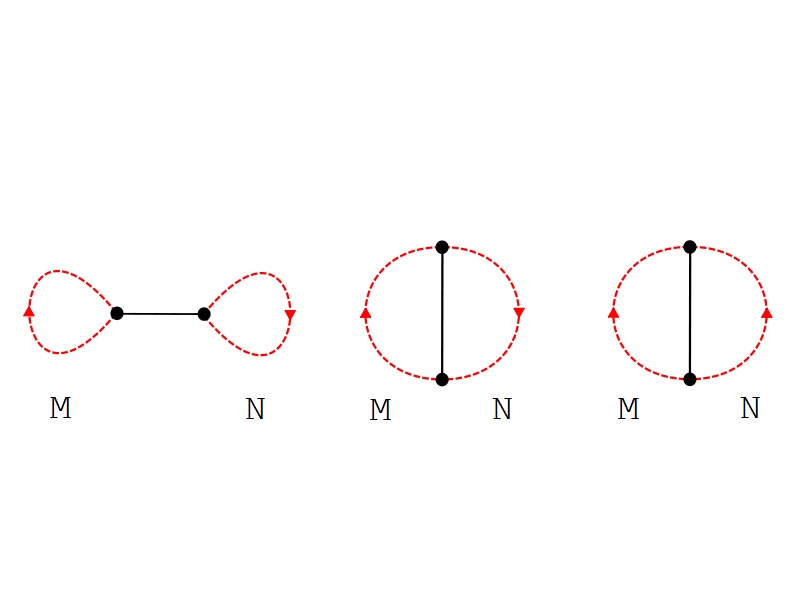}
\caption{Three trivalent graphs with $\beta_1 = 2$, chosen spanning trees (black), and oriented edges (red).}
\label{3graph}
\end{figure}

Notice that the non-spanning tree edges in each graph in Figure \ref{3graph} have been labeled with $M$ or $N$.  Each word $M, N, MN, MN^{-1}$ then corresponds to a unique closed cellular path through one of these graphs. The valuation $\v_1$ can then be derived by reading off the number of times each of these paths crosses an edge (see \cite{M-outer} for this construction).  Taking for example the leftmost graph, the word $MN^{-1}$  crosses the left loop $1$ time, the right loop $1$ time, and the middle edge $2$ times yielding the vector $(1, 1, 2)$; this is the negative of the first column of $M_1$. With the chosen orientation, the leftmost graph should not be able to distinguish between $MN$ and $MN^{-1}$, and indeed the second column of $M_1$ agrees with the first column.  In this way, the Newton-Okounkov polytopes constructed in \cite{M-NOK} for $\mathcal{X}(F_2, \SL_2(\C))$ can be reproduced using the tropical variety $\T$ and the matrices $M_i$. 
\end{example}

\begin{example}[Revisiting Example \ref{ex-Gobel-elliptic}]
\label{ex-Gobel-revisited}
The alternating invariants $A = \k[x_1, x_2, x_3]^{A_3}$ can be presented by the elementary symmetric polynomials $e_1 = x_1 + x_2 + x_3$, $e_2 = x_1x_2 + x_1x_3 + x_2x_3$, $e_3 = x_1x_2x_3$ along with the Vandermonde determinant form $y = (x_1 - x_2)(x_1-x_3)(x_2-x_3)$. The principal ideal which vanishes on these generators is generated by the polynomial $$f = e_1^2e_2^2 -4e_2^3 -4e_3e_1^3 + 18e_1e_2e_3 - 27e_3^2 - y^2.$$ The tropical variety $\T(f)$ contains three maximal prime cones corresponding to the initial ideals $\langle 4e_2^3 - y^2 \rangle, \langle4e_2^3  - 27e_3^2\rangle,$ and $\langle4e_3e_1^3 -y^2 \rangle$ respectively.  In particular, the generators $e_1, e_2, e_3, y$ are indeed a Khovanskii basis for the corresponding valuations. The valuations on $A$ built from these prime cones are not restrictions of initial term valuations on the polynomial algebra $\k[x_1, x_2, x_3]$ to $A$ as in SAGBI theory. This is confirmed by Example \ref{ex-Gobel-elliptic}(1).
\end{example}

\section{Appendix: Gr\"obner bases and higher rank tropical geometry}    \label{sec-groebner}
In this Appendix we introduce what we will need from the theory of Gr\"obner bases and tropical geometry. We extend the theory of monomial weightings to weightings by $\Q^r$, $r \geq 1$ equipped with a group ordering. This lays the groundwork for studying higher rank valuations and quasivaluations from an algorithmic perspective (see Section \ref{sec-weightandsubductive}).


\begin{remark} Higher rank versions of tropicalization have been studied by Foster and Ranganathan (\cite{FosterDhruv}).
\end{remark}  

\subsection{Gr\"obner theory} \label{subsec-Grobner-theory}
We refer the reader to \cite{St}, \cite{E}, and \cite{CLO} for the basics of the theory of Gr\"obner bases. We will consider weightings of the monomials in a polynomial ring $\k[\x] = \k[x_1, \ldots, x_n]$ by elements of $(\Q^r, \succ)$, where $\succ$ is a group ordering.  Although the geometric aspects of the term orders resulting from these weightings differ from those of weightings by $\Q$ (for example, we avoid discussion of the meaning of the  Gr\"obner fan when $r>1$), many of the algebraic and algorithmic properties of these term orders continue to hold.  We have shortened proofs when the essential ideas are already covered in $r =1$ case. 





\begin{definition}[Gr\"obner region]  \label{def-GR-r}
We define the higher rank Gr\"obner region $\textup{GR}^r(I) \subset \Q^{r \times n}$ of an ideal $I \subset \k[\x]$ as follows.
\begin{itemize}
\item[(1)] We say that $\w \in \Q^{r \times n}$ is in the {\it Gr\"obner region} $\textup{GR}^r(I)$ if and only if there is some monomial ordering $>$ such that the following holds:
\begin{equation}   \label{equ-GR}
\In_{>}(\In_\w(I)) = \In_{>}(I).
\end{equation}
For a fixed monomial ordering $>$, we denote the set of $\w$ satisfying \eqref{equ-GR} by $C^r_{>}(I)$.
\item[(2)] We also define the set $C_\w(I) \subset \Q^{r \times n}$ as the collection of those $\w' \in \Q^{r \times n}$ such that $\In_{\w'}(I) = \In_\w(I)$. 
\end{itemize}
\end{definition}

\begin{remark}
In the rank $1$ case, the points satisfying this definition of Gr\"obner region are in the Gr\"obner region as defined in the literature \cite[pg 13]{St}, \cite[Chapter 3]{JensenThesis}.  Given $u \in \Q^n$ and a monomial ordering $>$ such that $\In_>(\In_u(I)) = \In_>(I)$, one can show that $u$ is in the closure of the set of weights whose initial ideal agrees with the initial ideal of a weight from $\Q_{\geq 0}^n$: We can find $v \in \Q_{\geq 0}^n$ and a generating set $k_1, \ldots, k_m \in I$ such that the $\In_>(k_i)$ generate $\In_>(I)$ and $\In_v(k_i) = \In_v(\In_u(k_i) = \In_>(k_i)$ for each $1 \leq i \leq m$. It follows that $\In_v(I) = \In_{u + \epsilon v}(I)$ for small $\epsilon$, so $u$ is in the closure of the set of weights whose initial ideal agrees with $\In_v(I)$. 
\end{remark}

The monomials not contained in $\In_{>}(I)$ are usually called {\it standard monomials}. It is well-known that the images of standard monomials in $\k[\x]/I$ are a vector space basis for this quotient which we denote by $\mathbb{B}_>(I)$ (or simply by $\mathbb{B}$ when there is no chance of confusion). The proof of the following lemma is exactly as the proof of \cite[Lemma 3.1.11]{JensenThesis}.

\begin{lemma}   \label{initialformwrite}
Any $h \in \In_\w(I)$ can be written as a sum $\sum_i \In_\w(f_i)$ for $f_i \in I$, where the summands all have different homogeneous $\w$-degrees. 
\end{lemma}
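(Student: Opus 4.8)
\textbf{Proof plan for Lemma \ref{initialformwrite}.}

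The plan is to mimic the standard "Gröbner basis for the initial module" argument, adapted to the higher-rank weighting $M \in \Q^{r\times n}$. Let $h \in \In_M(I)$ be nonzero. By definition of $\In_M(I)$ there exist $g_1,\dots,g_k \in I$ and $p_1,\dots,p_k \in \k[\x]$ with $h = \sum_j p_j \In_M(g_j)$. The obstruction is that this raw expression need not have the homogeneity we want: the products $p_j \In_M(g_j)$ mix $M$-degrees, and $\In_M(g_j)$ is not itself of the form $\In_M(f)$ for a single $f \in I$ once we multiply by $p_j$. So the first step is to reduce to the case where each term is $M$-homogeneous. Since $\In_M(I)$ is an $M$-homogeneous ideal (each $\In_M(g)$ is $M$-homogeneous, being a sum of monomials of equal $M$-degree), we may decompose $h = \sum_{d} h_d$ into its $M$-homogeneous components and treat each $h_d$ separately; thus it suffices to prove the statement when $h$ itself is $M$-homogeneous of some degree $d \in \Q^r$, and then to show $h_d$ is a single $\In_M(f)$ (or a sum of such, all of degree $d$ — but once we are in a fixed degree, a sum of $M$-homogeneous elements of $I$ of degree $d$ whose initial forms all have degree $d$ is itself fine, so really we just need $h_d \in \{\In_M(f) : f \in I\}$ up to adding lower-order corrections, which is the heart of the matter).

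The key step: given $h$ that is $M$-homogeneous of degree $d$ and lies in $\In_M(I)$, I want to produce a single $f \in I$ with $\In_M(f) = h$. Take the raw expression $h = \sum_j p_j \In_M(g_j)$ and extract from each side the part of $M$-degree exactly $d$: writing $p_j = \sum_\gamma c_{j\gamma}\x^\gamma$, only those monomials $\x^\gamma$ with $M\gamma + (\text{degree of }\In_M(g_j)) = d$ contribute, so after discarding the rest we may assume each $p_j$ is $M$-homogeneous and each product $p_j\In_M(g_j)$ is $M$-homogeneous of degree $d$. Now set $f = \sum_j p_j g_j \in I$. For each $j$, since $\In_M(p_j g_j) = p_j \In_M(g_j)$ — multiplication by the (here monomial-homogeneous) $p_j$ commutes with taking $M$-initial forms, because $\v_M$ (the weight valuation $\tilde{\v}_M$ on $\k[\x]$) is a genuine valuation, so $\v_M(p_j g_j) = \v_M(p_j) + \v_M(g_j)$ — every term of $p_j g_j$ other than those in $p_j\In_M(g_j)$ has $M$-degree strictly $\succ d$. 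Summing over $j$, the degree-$d$ part of $f$ is exactly $h$, and all other terms of $f$ have $M$-degree $\succ d$. If $h \ne 0$ this forces $\In_M(f) = h$, completing the argument for a single homogeneous $h$; reassembling over the finitely many degrees $d$ appearing in the decomposition of a general $h$ gives the stated finite sum $\sum_i \In_M(f_i)$ with distinct homogeneous $M$-degrees.

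The main obstacle, and the only place one must be a little careful, is the commutation $\In_M(p\,g) = p\,\In_M(g)$ for a monomial-$M$-homogeneous $p$ and $g\in I$: this is exactly the statement that the map $\tilde{\v}_M$ on $\k[\x]$ is a valuation (not merely a quasivaluation) and that it is a grading function for the $\Q^r$-grading induced by $M$ on monomials — which is true since on $\k[\x]$ the $M$-degree of a product of monomials is the sum of their $M$-degrees, so there is no cancellation of leading terms. Once this is granted, the rest is bookkeeping: splitting into $M$-homogeneous pieces, truncating the $p_j$ to the relevant degree, and observing that the error terms land in strictly higher $M$-degree. I would also remark that this is the precise analogue of \cite[Lemma 3.1.11]{JensenThesis}, the only new point being that $\succ$ is a higher-rank group ordering rather than the usual order on $\Q$, which changes nothing in the proof since all that is used is that $\succ$ is a total group ordering and that $M$-degrees add under multiplication of monomials.
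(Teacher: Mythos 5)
Your argument is correct and is exactly the proof the paper has in mind: the paper proves this lemma by reference to \cite[Lemma 3.1.11]{JensenThesis}, whose proof is precisely your decomposition into $M$-homogeneous components followed by the observation that, after truncating the coefficients $p_j$ to the relevant degree, $f=\sum_j p_j g_j\in I$ has its minimal $M$-degree part equal to $h_d$, so $\In_M(f)=h_d$. The only point you flag as delicate — that $M$-degrees of monomials add under multiplication, so there is no cancellation of leading terms — is indeed the only thing that needs checking in the higher-rank setting, and it holds because $\succ$ is a group ordering.
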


Let $G_>(I)$ denote the reduced Gr\"obner basis of $I$ with respect to $>$. As with the standard Gr\"obner theory, it is possible to check for membership in $C^r_{>}(I)$ using $G_>(I)$.  


\begin{lemma}\label{Cinequalities}
A weight $\w \in \Q^{r \times n}$ is in $C^r_{>}(I)$ if and only if $\In_>(\In_\w(g)) = \In_>(g)$ for all $g \in G_>(I)$. In particular, $C^r_{>}(I)$ is defined by a finite set of inequalities.  Furthermore, 

\begin{equation}
G_>(\In_\w(I)) = \{\In_\w(g) \mid g \in G_>(I)\},
\end{equation}

{is the reduced Gr\"obner basis for $\In_\w(I)$ with respect to $>$.}


\end{lemma}
\begin{proof}
This is a standard result in Gr\"obner theory, see \cite[Lemma 3.1.12]{JensenThesis}.  
\end{proof}

The next lemma gives a characterization of the set $C_\w(I)$ for when 
$\w $ lies in the Gr\"obner region $\GR^r(I)$. The proof is exactly as in the proof of \cite[Proposition 3.1.4]{JensenThesis}.
\begin{lemma}\label{Cgeninequalities}
Let $\w \in C^r_{>}(I)$, then $\w' \in C_\w(I)$ if and only if $\In_\w(g) = \In_{\w'}(g)$ for all $g \in G_{>}(I)$.
\end{lemma}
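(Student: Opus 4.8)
The plan is to deduce the statement from the characterization of membership in the Gr\"obner cone $C^r_>(I)$ given in Lemma \ref{Cinequalities} together with the description of $G_>(\In_\w(I))$ in Lemma \ref{initialreduced}, following \cite[Proposition 3.1.4]{JensenThesis}. The first step, common to both implications, is to observe that each of the two conditions in the statement forces $\w' \in C^r_>(I)$. Indeed, if $\In_\w(g) = \In_{\w'}(g)$ for all $g \in G_>(I)$, then for each such $g$ we get $\In_>(\In_{\w'}(g)) = \In_>(\In_\w(g)) = \In_>(g)$, the last equality because $\w \in C^r_>(I)$ (Lemma \ref{Cinequalities}); so $\w' \in C^r_>(I)$ by Lemma \ref{Cinequalities}. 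On the other hand, if $\In_{\w'}(I) = \In_\w(I)$, then $\In_>(\In_{\w'}(I)) = \In_>(\In_\w(I)) = \In_>(I)$, again putting $\w'$ in $C^r_>(I)$. Hence in either direction both $\w$ and $\w'$ lie in $C^r_>(I)$, and Lemma \ref{initialreduced} applies to each of them.

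For ($\Leftarrow$): assuming $\In_\w(g) = \In_{\w'}(g)$ for all $g \in G_>(I)$, Lemma \ref{initialreduced} shows that $\{\In_\w(g) \mid g \in G_>(I)\}$ generates $\In_\w(I)$ and $\{\In_{\w'}(g) \mid g \in G_>(I)\}$ generates $\In_{\w'}(I)$. By hypothesis these two generating sets are equal, so $\In_\w(I) = \In_{\w'}(I)$, i.e. $\w' \in C_\w(I)$.

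For ($\Rightarrow$): assuming $\w' \in C_\w(I)$, Lemma \ref{initialreduced} identifies $G_>(\In_\w(I)) = \{\In_\w(g) \mid g \in G_>(I)\}$ and $G_>(\In_{\w'}(I)) = \{\In_{\w'}(g) \mid g \in G_>(I)\}$; since $\In_\w(I) = \In_{\w'}(I)$ these are one and the same reduced Gr\"obner basis with respect to $>$. Fix $g \in G_>(I)$. Both $\In_\w(g)$ and $\In_{\w'}(g)$ have $>$-initial monomial equal to $\In_>(g)$ by Lemma \ref{Cinequalities} (applied to $\w$ and to $\w'$), and a reduced Gr\"obner basis contains exactly one element with a prescribed initial monomial; therefore $\In_\w(g) = \In_{\w'}(g)$.

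I do not expect a serious obstacle here: once the reduction to $\w,\w' \in C^r_>(I)$ is in place the argument is two short deductions from Lemmas \ref{Cinequalities} and \ref{initialreduced}. The only point requiring care is the passage from the rank one setting to rank $r$, namely that the higher-rank initial form $\In_\w$ still interacts with the monomial ordering $>$ so that $\In_>(\In_\w(g))$ behaves as in the classical case and reduced Gr\"obner bases are still determined by their sets of initial monomials; but this is exactly what is encoded in Lemmas \ref{Cinequalities} and \ref{initialreduced}, whose proofs are imported verbatim from \cite{JensenThesis}.
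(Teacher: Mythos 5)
Your proof is correct and follows essentially the same route as the paper, which simply defers to \cite[Proposition 3.1.4]{JensenThesis}; your argument is the expected expansion of that citation, built from Lemmas \ref{Cinequalities} and \ref{initialreduced} exactly as intended. The preliminary observation that either hypothesis already forces $\w' \in C^r_>(I)$, and the use of uniqueness of leading monomials in a reduced Gr\"obner basis for the forward direction, are precisely the standard steps.
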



The negative orthant is always part of the Gr\"obner region when $r = 1$ (recall that we are using the MIN convention); we give a generalization of this fact to $r \geq 1$. The Hahn embedding theorem (see Section \ref{subsec-valuation}) implies that there is an embedding of ordered groups $\eta: \Q^r \to \R^r$, where $\R^r$ is given the standard lexicographic ordering. In particular, the subset $Q^- = \eta^{-1}(\eta(\Q^r) \cap (\R_{\leq 0})^r)$ has the property that for any lattice $L \subset \Q^r$, the set $L \cap Q^-$ is maximum well-ordered. There is always an element $\textbf{1} \in Q^-$ such that for any $w \in \Q^r$, we have $w + N \textbf{1} \in Q^-$ for $N$ sufficiently large.  If $\succ$ is taken to be the standard lexicographic ordering, then $Q^- = (\Q_{\leq 0})^r$ and $\textbf{1} = (-1, \ldots, -1) \in \Q^r$.  

\begin{lemma}   \label{lem-negativeGregion}
For any $I \subset \k[\x]$ we have  $(Q^-)^n \subset \textup{GR}(I)$.  Furthermore, if $I$ is homogeneous with respect to a positive grading then $\textup{GR}^r(I) = \Q^{r \times n}$.
\end{lemma}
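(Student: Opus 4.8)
The plan is to reduce the two assertions to standard facts about Gröbner bases, exploiting the Hahn embedding to transfer everything to the lexicographic setting where the classical $r=1$ argument generalizes coordinatewise. For the first assertion, let $\w \in (Q^-)^n$. I would pick any monomial ordering $>$ on $\k[\x]$ and argue that $\In_>(\In_\w(I)) = \In_>(I)$. The key observation is that, because each column of $\w$ lies in $Q^-$, the $\w$-grading on $\k[\x]$ is \emph{bounded above} on the set of monomials of any fixed standard degree in the sense made precise by the maximum-well-ordered property: for any lattice $L \subset \Q^r$, the set $L \cap Q^-$ is maximum well-ordered. Concretely, for $f \in I$ the minimum $\min\{\w\alpha \mid c_\alpha \neq 0\}$ that defines $\In_\w(f)$ is attained, and the refined ordering "first compare $\w$-weight, then break ties by $>$" is itself a monomial ordering $>_\w$ (here one uses that $Q^-$ is closed under adding $N\mathbf{1}$ so that $>_\w$ is maximum well-ordered on each graded piece, hence globally). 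One then checks the standard chain $\In_>(\In_\w(f)) = \In_{>_\w}(f)$ for all $f$, so $\In_>(\In_\w(I)) = \In_{>_\w}(I) = \In_>(I)$ after possibly replacing $>$ by $>_\w$ — but since we only need \emph{some} monomial ordering to witness membership in $\GR^r(I)$, taking $> \,=\, >_\w$ directly suffices, and then \eqref{equ-GR} holds tautologically once one verifies $\In_{>_\w}(\In_\w(I)) = \In_{>_\w}(I)$, which is the standard "refinement" lemma.

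For the second assertion, suppose $I$ is homogeneous with respect to a positive $\Z$-grading $\deg$ on $\k[\x]$ (i.e. $\deg(x_i) > 0$ for all $i$). Given an arbitrary $\w \in \Q^{r\times n}$, I would use the element $\mathbf{1} \in Q^-$ from the setup: for $N$ sufficiently large each column of $\w + N(\mathbf{1}\cdot \deg(x_i))_i$ — more simply, the weighting $\w' = \w + N \cdot \mathbf{1} \otimes (\deg x_1, \ldots, \deg x_n)$, i.e. shift the $i$-th column by $\deg(x_i)\,\mathbf{1}$ — lands in $(Q^-)^n$. Because $I$ is homogeneous with respect to $\deg$, every $f \in I$ is a sum of $\deg$-homogeneous elements of $I$, and on a $\deg$-homogeneous polynomial the extra term $N\deg(x_i)\mathbf{1}$ added to each column contributes the \emph{same} constant $N\cdot(\text{that homogeneous degree})\cdot\mathbf{1}$ to the $\w'$-weight of every monomial; hence $\In_{\w'}(f) = \In_\w(f)$ for $\deg$-homogeneous $f$, and therefore $\In_{\w'}(I) = \In_\w(I)$ since $I$ has a generating set of $\deg$-homogeneous elements (and, more carefully, since $\In$ of a sum of elements of distinct $\deg$-degrees is handled via Lemma \ref{initialformwrite} together with $\deg$-homogeneity of $I$). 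By the first part $\w' \in \GR^r(I)$, i.e. $\In_>(\In_{\w'}(I)) = \In_>(I)$ for some $>$; substituting $\In_{\w'}(I) = \In_\w(I)$ gives $\w \in \GR^r(I)$. As $\w$ was arbitrary, $\GR^r(I) = \Q^{r\times n}$.

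The main obstacle I anticipate is the bookkeeping in the first part: verifying carefully that the tie-breaking order $>_\w$ really is a legitimate monomial ordering (total, multiplicative, and maximum well-ordered) when the weights live in a rank-$r$ ordered group. This is exactly where the special structure of $Q^-$ is needed — the maximum-well-orderedness of $L \cap Q^-$ for every lattice $L$, and the existence of the "shift vector" $\mathbf{1}$ — and it is the point at which the $r>1$ case genuinely differs from $r=1$ rather than being a formal copy. Everything else (the identity $\In_>(\In_\w(f)) = \In_{>_\w}(f)$, and the homogeneity argument reducing a general $\w$ to one in $(Q^-)^n$) follows the classical template, e.g. \cite[Chapter 1]{St} or \cite[Section 3.1]{JensenThesis}, essentially verbatim.
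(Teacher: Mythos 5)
Your proof follows the paper's argument essentially verbatim: for $\w\in(Q^-)^n$ one forms the composite ordering $>_\w$ (compare $\w$-weights first, break ties with a monomial ordering $>$), checks it is a monomial ordering using the maximum-well-orderedness of $L\cap Q^-$, and gets $\In_{>_\w}(\In_\w(I))=\In_{>_\w}(I)$; for the homogeneous case one shifts the $i$-th column of $\w$ by a large multiple of $\deg(x_i)\,\mathbf{1}$ to land in $(Q^-)^n$ without changing the initial ideal. Both steps match the paper's proof, so the proposal is correct and takes the same approach.
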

\begin{proof}
If $\w \in (Q^-)^n$ then for any monomial $\x^{\alpha}$ there are only finitely many monomials $\x^{\beta}$
with $M \beta \succ M \alpha$.  We can define the composite ordering $>_\w$ as in \cite[Proposition 1.8]{St} and conclude that it refines the monomial weighting by $M$.  Now, if $I$ is homogeneous with respect to $(d_1, \ldots, d_n) \in \Q_{\geq 0}^n$ we let $D \in (Q^-)^n$ be the matrix $[d_1 \textbf{1}, \ldots, d_n\textbf{1}]$;  then any multiple of $D$ can be added to $\w$ without altering the initial ideal.
\end{proof}


\subsection{Comparison of initial ideals}
Let $\Q^r$ be equipped with the standard lexicographic ordering $\succ$.  We show that the initial ideals obtainable by $\w \in \Q^{r \times n}$ are the same as those in the $r =1 $ case; this statement is used in Section \ref{sec-val-from-prime-cone}.

\begin{lemma}\label{lem-in_w-in_u_i}
Let $\w \in \Q^{r \times n}$ and let $\u_1, \ldots, \u_r$ be the rows of the matrix $\w$. 
Then the following initial ideals coincide:
\begin{equation}
\In_\w(I) = \In_{\u_r}(\ldots \In_{\u_1}(I) \ldots ).
\end{equation}
\end{lemma}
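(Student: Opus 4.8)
The plan is to prove the equality of ideals by a double inclusion, reducing everything to the corresponding statement for initial forms of single polynomials and then invoking Lemma \ref{initialformwrite}. First I would set up the key comparison at the level of monomials: for a polynomial $f = \sum_\alpha c_\alpha \x^\alpha$, the $\w$-degree $M\alpha \in \Q^r$ (with the lexicographic order) is the vector $(\u_1 \cdot \alpha, \ldots, \u_r \cdot \alpha)$, so that $M\alpha \succeq M\beta$ precisely when $(\u_1 \cdot \alpha, \ldots, \u_r \cdot \alpha)$ is lexicographically at least $(\u_1 \cdot \beta, \ldots, \u_r \cdot \beta)$. From this it follows directly that $\In_\w(f) = \In_{\u_r}(\cdots \In_{\u_1}(f)\cdots)$: taking $\In_{\u_1}$ keeps the terms minimizing $\u_1 \cdot \alpha$, then among those $\In_{\u_2}$ keeps the ones minimizing $\u_2 \cdot \alpha$, and so on, which is exactly the lexicographic minimum selection defining $\In_\w(f)$. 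I would state and prove this as a preliminary observation for an arbitrary $f \in \k[\x]$.

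Next, for the inclusion $\In_\w(I) \subseteq \In_{\u_r}(\cdots \In_{\u_1}(I)\cdots)$: a generator of $\In_\w(I)$ is $\In_\w(f)$ for some $f \in I$, and by the preliminary observation $\In_\w(f) = \In_{\u_r}(\cdots \In_{\u_1}(f)\cdots)$; since $f \in I$ we get $\In_{\u_1}(f) \in \In_{\u_1}(I)$, hence $\In_{\u_2}(\In_{\u_1}(f)) \in \In_{\u_2}(\In_{\u_1}(I))$, and iterating gives membership in the iterated initial ideal, so this direction is immediate.

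The reverse inclusion $\In_{\u_r}(\cdots \In_{\u_1}(I)\cdots) \subseteq \In_\w(I)$ is the part that needs care, and I expect it to be the main obstacle, because a generator of the iterated ideal is of the form $\In_{\u_r}(g_{r-1})$ where $g_{r-1} \in \In_{\u_{r-1}}(\cdots)$ is \emph{not} a priori of the form $\In_{\u_{r-1}}(\text{something in the previous ideal})$ — it is only a sum of initial forms with distinct $\u_{r-1}$-degrees. This is exactly where Lemma \ref{initialformwrite} enters. The strategy is to induct on $r$. By the inductive hypothesis $\In_{\u_{r-1}}(\cdots \In_{\u_1}(I)\cdots) = \In_{M'}(I)$ where $M'$ is the $(r-1)\times n$ matrix with rows $\u_1, \ldots, \u_{r-1}$. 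Then a generator of the full iterated ideal has the form $\In_{\u_r}(g)$ with $g \in \In_{M'}(I)$. By Lemma \ref{initialformwrite}, write $g = \sum_i \In_{M'}(f_i)$ with $f_i \in I$ and the summands having distinct homogeneous $M'$-degrees; ordering so that $\In_{M'}(f_1)$ has the $\succ$-minimal $M'$-degree among those appearing, one checks $\In_{\u_r}(g)$ picks out terms inside $\In_{\u_r}(\In_{M'}(f_1)) = \In_{M'', \text{lex}}(f_1)$... — more precisely, since the $M'$-degrees are distinct, no cancellation occurs between blocks, so the lowest $M'$-degree block $\In_{M'}(f_1)$ survives and $\In_{\u_r}(g) = \In_{\u_r}(\In_{M'}(f_1)) = \In_\w(f_1) \in \In_\w(I)$ by the preliminary observation. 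I would be careful here to handle the ordering correctly (the $\succ$-least $M'$-degree dominates under the MIN convention) and to confirm that distinctness of the degrees is exactly what prevents cancellation. Once this is in place, the two inclusions give the claimed equality and the induction closes, with the base case $r = 1$ being a tautology.
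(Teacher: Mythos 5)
Your overall strategy is the same as the paper's: the pointwise identity $\In_\w(f)=\In_{\u_r}(\cdots\In_{\u_1}(f)\cdots)$ for a single polynomial, the resulting easy inclusion, and then induction on $r$ together with Lemma \ref{initialformwrite} for the reverse inclusion. The easy direction and the reduction via the inductive hypothesis to $g\in\In_{M'}(I)$ are fine.

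There is, however, a wrong step in the hard direction. After writing $g=\sum_i\In_{M'}(f_i)$ with $f_i\in I$ and distinct $M'$-degrees, you claim that $\In_{\u_r}(g)$ is computed from the block $\In_{M'}(f_1)$ of $\succ$-minimal $M'$-degree. That is false: $\In_{\u_r}$ selects the terms of $g$ of minimal $\u_r$-degree, and the $\u_r$-degree of a monomial is not controlled by its $M'$-degree. For instance, with $\u_1=(1,0)$, $\u_2=(0,1)$ and blocks $xy^2$ (of $\u_1$-degree $1$) and $x^2y$ (of $\u_1$-degree $2$), the $\u_2$-initial form of the sum is $x^2y$, coming from the block of \emph{larger} $\u_1$-degree. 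What the distinctness of the $M'$-degrees actually buys you (as you correctly note) is that the blocks share no monomials, so no cancellation occurs; hence the terms of $g$ of minimal $\u_r$-degree are exactly the union, over the set $J$ of indices $i$ whose block attains that global minimum, of the minimal-$\u_r$-degree terms of $\In_{M'}(f_i)$. This gives $\In_{\u_r}(g)=\sum_{i\in J}\In_{\u_r}(\In_{M'}(f_i))=\sum_{i\in J}\In_\w(f_i)$, which still lies in $\In_\w(I)$, so the conclusion survives; but the identification of the surviving block(s) must be corrected, since in general $J$ is neither $\{1\}$ nor a singleton. With that one-line repair your argument closes and is essentially the argument given in the paper (the paper applies Lemma \ref{initialformwrite} a second time to an arbitrary element $h$ of the iterated ideal rather than restricting to generators $\In_{\u_r}(g)$, but restricting to generators is legitimate for an ideal inclusion).
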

\begin{proof}
First observe that for any $f \in I$ we have $\In_\w(f) =  \In_{\u_r}(\ldots \In_{\u_1}(f)\ldots )$. This implies that $\In_\w(I) \subset \In_{\u_r}(\ldots \In_{\u_1}(I)\ldots )$. To prove the other inclusion, we proceed by induction on $r$. Let $h \in \In_{\u_r}(\ldots \In_{\u_1}(I)\ldots )$. We can assume that $\In_{\u_{r-1}}(\ldots \In_{\u_1}(I)\ldots ) = \In_{\w'}(I)$, where $\w' \in \Q^{(r-1) \times n}$ is the matrix with rows $\u_1, \ldots, \u_{r-1}$, regarded as a $\Q^{r-1}$-monomial weighting.  Use Lemma \ref{initialformwrite} to write $h = \sum_i \In_{\u_r}(f_i)$ where $f_i \in \In_{\w'}(I)$ and each $\In_{\u_r}(f_i)$ has a distinct homogeneous $\u_r$-degree. This implies that no monomials are shared among the $\In_{\u_r}(f_i)$.  Another application of Lemma \ref{initialformwrite} implies that each $f_i$ can be written as $\sum_j \In_{\w'}(g_{ij})$ for $g_{ij} \in I$, where each $\In_{\w'}(g_{ij})$ has a distinct homogeneous $\w'$-degree. It follows that $\In_{\u_r}(f_i) = \sum_i \In_{\u_r}(\In_{\w'}(g_{ij}))$ and hence $h = \sum_{i,j} \In_{\u_r}(\In_{\w'}(g_{ij}))$.
\end{proof}

For any $\u \in \Q^n$ it is clear that we can find $\w \in \Q^{r \times n}$ such that $\In_{\u}(I) = \In_{\w}(I)$. Now we show that it is possible, given $\w \in \Q^{r \times n}$, to find $\u \in \Q^n$ with the same initial ideal as $\w$. Let $A_i = \{\alpha_{i1}, \ldots, \alpha_{in_i} \} \subset \Q^r$, $1 \leq i \leq m$, be finite sets (later we will take $A_i$ to be the set of $\w$-weights of monomials in some polynomial $f_i$ for some weighting matrix $\w \in \Q^{r \times n}$). We denote the smallest element in each $A_i$ by $\beta_i$.  Also, for $\u, v \in \Q^r$, we write $\u \cdot v$ for the standard inner product. The next lemma is a consequence of \cite[Proposition 1.11]{St}. 

\begin{lemma}\label{lem-weight-approx}
There is a vector $v \in \Q_{\geq 0}^r$ such that $v \cdot (\beta_i -  \alpha_{ij}) < 0$ whenever $\alpha_{ij} \succ \beta_i$.  
\end{lemma}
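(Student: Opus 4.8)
### Proof proposal for Lemma~\ref{lem-weight-approx}

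The plan is to reduce this to a strict separation problem for a single polyhedral cone and then invoke rationality to clear denominators. For each pair $(i,j)$ with $\alpha_{ij} \succ \beta_i$ in the lexicographic order, set $\delta_{ij} = \alpha_{ij} - \beta_i \in \Q^r$; by hypothesis $\delta_{ij} \neq 0$ and its leading nonzero coordinate is strictly positive. We want $v \in \Q_{\geq 0}^r$ with $v \cdot \delta_{ij} > 0$ for all such $(i,j)$, i.e. a single nonnegative rational vector lying strictly on the positive side of each of the finitely many hyperplanes $\{w : w \cdot \delta_{ij} = 0\}$.

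First I would handle one $\delta = \delta_{ij}$ at a time by an explicit construction in $\R^r$. Write $\delta = (\delta^{(1)}, \ldots, \delta^{(r)})$ and let $k = k(\delta)$ be the index of its first nonzero entry, so $\delta^{(k)} > 0$ by the $\alpha_{ij} \succ \beta_i$ assumption. For a real parameter $\varepsilon > 0$, consider $v_\varepsilon = (1, \varepsilon, \varepsilon^2, \ldots, \varepsilon^{r-1}) \in \R_{> 0}^r$. Then $v_\varepsilon \cdot \delta = \varepsilon^{k-1}\big(\delta^{(k)} + \varepsilon\,\delta^{(k+1)} + \cdots + \varepsilon^{r-k}\delta^{(r)}\big)$, and since the bracketed expression tends to $\delta^{(k)} > 0$ as $\varepsilon \to 0^+$, there is $\varepsilon_0(\delta) > 0$ so that $v_\varepsilon \cdot \delta > 0$ for all $0 < \varepsilon < \varepsilon_0(\delta)$. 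Taking $\varepsilon$ smaller than the minimum of the finitely many thresholds $\varepsilon_0(\delta_{ij})$ gives a single $v_\varepsilon \in \R_{>0}^r$ that is strictly positive against every $\delta_{ij}$. The strict inequalities are preserved under small perturbation, so we may replace $v_\varepsilon$ by a nearby rational vector in $\Q_{>0}^r \subset \Q_{\geq 0}^r$; clearing denominators if one insists on integrality is harmless but not needed since $v \in \Q_{\geq 0}^r$ is all that is asked. Finally, $v \cdot(\beta_i - \alpha_{ij}) = -\,v\cdot\delta_{ij} < 0$, as required.

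An alternative, perhaps cleaner, route is purely polyhedral: the set $K = \{ w \in \R^r : w \cdot \delta_{ij} \geq 0 \text{ for all } (i,j) \text{ with } \alpha_{ij} \succ \beta_i\}$ is a rational polyhedral cone, and the lexicographic-order hypothesis says exactly that it meets the interior of the nonnegative orthant (the vector $v_\varepsilon$ above is a witness), so $K \cap \R_{>0}^r \neq \emptyset$; since both $K$ and the orthant are defined over $\Q$, their relatively open intersection contains a rational point, and any such point that additionally avoids the finitely many facets $w\cdot\delta_{ij} = 0$ works — a generic rational point of $K^\circ \cap \Q_{>0}^r$ does. Either way, the content is the same.

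The only real obstacle is bookkeeping around the group ordering: one must make sure that ``$\alpha_{ij} \succ \beta_i$ in the lexicographic order on $\Q^r$'' really does translate into ``the leading nonzero coordinate of $\delta_{ij}$ is positive,'' which is immediate from the definition of lexicographic order (and the excerpt has already reduced the general ordered-group case to the lexicographic one via the Hahn embedding $\eta$). Everything else — the limit argument for $v_\varepsilon$, the finiteness of the index set, and the passage from real to rational — is routine.
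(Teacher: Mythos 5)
Your proof is correct, but it takes a genuinely different route from the paper. The paper follows Sturmfels's Proposition 1.11 and argues by contradiction via the Farkas lemma: if no such $v$ existed, there would be nonnegative integers $\lambda_{ij}$, not all zero, with $\sum_{i,j}\lambda_{ij}(\beta_i-\alpha_{ij})$ componentwise nonnegative, hence $\sum\lambda_{ij}\beta_i \succeq \sum\lambda_{ij}\alpha_{ij}$ because $\succ$ is a group ordering compatible with the positive orthant --- contradicting the strict minimality of each $\beta_i$. You instead construct the separating vector explicitly as $v_\varepsilon=(1,\varepsilon,\ldots,\varepsilon^{r-1})$ for small rational $\varepsilon>0$, using that in the lexicographic order the leading nonzero coordinate of each $\delta_{ij}=\alpha_{ij}-\beta_i$ is positive. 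Your argument is constructive (it actually exhibits $v$, which is pleasant from the algorithmic standpoint of this section) but is tied to the lexicographic order, whereas the duality argument only uses that $\succ$ is a group ordering under which $\Q_{\geq 0}^r$ sits in the nonnegative part; since the paper has already reduced to the lexicographic case via the Hahn embedding, this costs you nothing here. Both proofs need the same final observations --- finiteness of the index set and rationality of the witness --- and you handle these correctly.
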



Given a finite number of polynomials, Lemma \ref{lem-weight-approx} shows that for every $\w \in \Q^{r \times n}$ there exists $\u \in \Q^n$ such that the initial forms of these polynomials with respect to $\w$ are the same as their initial forms with respect to $\u$. 

\begin{proposition}  \label{prop-in-u-rep-in-w}
Let $I \subset \k[\x]$ be an ideal, then for any $\w \in \Q^{r \times n}$ there is some $\u \in \Q^n$ such that $\In_{\w}(I) = \In_{\u}(I)$. 
\end{proposition}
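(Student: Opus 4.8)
The plan is to reduce the statement to a finite problem and then apply Lemma \ref{lem-weight-approx}. First I would fix a finite generating set (better yet, a Gr\"obner-type generating set) for $I$ with respect to some suitable ordering, say $\{f_1, \ldots, f_m\}$; the point is that $\In_\w(I)$ is determined by the initial forms $\In_\w(f_1), \ldots, \In_\w(f_m)$ once this set is chosen appropriately. More precisely, I would first establish (or invoke the standard fact) that if $G$ is a Gr\"obner basis of $I$ with respect to the composite order $>_\w$ built from $\w$ and any term order $>$ (as in Lemma \ref{lem-negativeGregion}), then $\{\In_\w(g) \mid g \in G\}$ generates $\In_\w(I)$; this is the higher-rank analogue of the usual statement and uses that $\In_{>_\w}(\In_\w(f)) = \In_{>_\w}(f)$.

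Next, for each $g$ in this finite set, write $g = \sum_\alpha c_\alpha \x^\alpha$ and let $A_g = \{ M\alpha \mid c_\alpha \neq 0 \} \subset \Q^r$ be its set of $\w$-weights, with $\beta_g$ the $\succ$-minimum. Applying Lemma \ref{lem-weight-approx} to the finite collection $\{A_g\}_g$ produces a vector $v \in \Q_{\geq 0}^r$ with $v \cdot (\beta_g - \alpha') < 0$ whenever $\alpha' \in A_g$ and $\alpha' \succ \beta_g$. I would then set $\u = v^{\mathsf T} M \in \Q^n$, i.e. the weight vector assigning to $x_i$ the value $v \cdot (i\text{-th column of }M)$. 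By construction, for each monomial $\x^\alpha$ appearing in $g$ we have $\u \cdot \alpha = v \cdot (M\alpha)$, so the choice of $v$ guarantees exactly that $\In_\u(g) = \In_\w(g)$ for every $g$ in our finite set.

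Finally I would conclude that $\In_\u(I) = \In_\w(I)$: the inclusion $\In_\w(I) \subseteq \In_\u(I)$ is clear since both ideals are generated by the same forms $\In_\w(g) = \In_\u(g)$ over $g$ in the (Gr\"obner) generating set, and for the reverse inclusion I would note that $\u$, being a nonnegative combination of the rows of $M$ applied appropriately — or more safely, after the standard trick of replacing $\u$ by $\u + N\mathbf{1}$ to land in the Gr\"obner region — the ideal $\In_\u(I)$ is also generated by $\{\In_\u(g)\}$ for the same Gr\"obner basis $G$, so the two ideals have a common generating set. Alternatively, one invokes that the $\w$-grading refines (pulls back) to the $\u$-grading on the relevant finite-dimensional pieces, so homogeneity with respect to $\u$ of $\In_\u(I)$ forces equality degree by degree.

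The main obstacle I anticipate is the bookkeeping around generating sets and the Gr\"obner region: Lemma \ref{lem-weight-approx} only controls finitely many polynomials, so one must argue that finitely many suffice to pin down the whole initial ideal, and for $\u$ one may need to translate into $(Q^-)^n$ (using $\mathbf{1}$ and Lemma \ref{lem-negativeGregion}) so that $C_\u(I)$ is cut out by finitely many inequalities coming from a reduced Gr\"obner basis, mirroring Lemma \ref{Cgeninequalities}. Once the reduction to a finite Gr\"obner basis is in place, the construction of $v$ and $\u$ and the verification that $\In_\u(g) = \In_\w(g)$ are routine, and the equality of ideals follows as in the cited results of \cite{JensenThesis}.
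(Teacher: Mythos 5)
Your argument for the case where $I$ is homogeneous with respect to a positive grading is exactly the paper's: pick a monomial ordering $>$ with $\w \in C^r_>(I)$, take the reduced Gr\"obner basis $G_>(I)$, apply Lemma \ref{lem-weight-approx} to the finitely many weight sets of its elements to get $v \in \Q_{\geq 0}^r$, set $\u = v^{\mathsf T}M$, and conclude $\In_\u(g)=\In_\w(g)$ for all $g \in G_>(I)$, whence $\In_\u(I)=\In_\w(I)$ (via Lemma \ref{initialreduced} applied to both weightings). In that setting $\GR^r(I)=\Q^{r\times n}$ by Lemma \ref{lem-negativeGregion}, so every step is justified.

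The gap is in your treatment of a general (non-homogeneous) $I$, which is what the proposition actually asserts. Two of your devices fail there. First, the composite order $>_\w$ of Lemma \ref{lem-negativeGregion} is only a monomial ordering when the columns of $\w$ lie in $Q^-$; for arbitrary $\w$ it need not be maximum well-ordered, and more to the point $\w$ need not lie in $\GR^r(I)$ at all, so there is no Gr\"obner basis $G$ for which $\{\In_\w(g)\}$ is guaranteed to generate $\In_\w(I)$ \emph{and} $\{\In_\u(g)\}$ to generate $\In_\u(I)$ (Noetherianity plus Lemma \ref{initialformwrite} still gives you a finite $F\subset I$ with $\In_\w(I)=\langle \In_\w(f): f\in F\rangle$, hence the inclusion $\In_\w(I)\subseteq \In_\u(I)$, but the reverse inclusion is exactly what breaks). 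Second, the ``standard trick'' of replacing $\u$ by $\u+N\mathbf{1}$ changes $\In_\u(I)$ unless $I$ is homogeneous — that invariance is precisely the second assertion of Lemma \ref{lem-negativeGregion} and is special to the homogeneous case (consider $I=\langle x-1\rangle$). The paper closes this gap by homogenizing: it passes to $I_h\subset \k[x_0,\x]$ with the extended weighting $(0,\w)$, runs the homogeneous-case argument there to produce $(0,\u)$, and then dehomogenizes using $\In_{(0,\w)}(I_h)_{x_0=1}=\In_\w(I)$ and $\In_{(0,\u)}(I_h)_{x_0=1}=\In_\u(I)$. You should replace your translation-by-$\mathbf{1}$ step with this homogenization argument; the rest of your proof then goes through.
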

\begin{proof}
First we assume that $I$ is homogeneous with respect to a positive grading on $\k[\x]$. This implies that $\w \in C^r_>(I) \subset \textup{GR}^r(I)$ for some monomial ordering $>$. Let $G_>(I) \subset I$ be the associated reduced Gr\"obner basis. By Lemma \ref{lem-weight-approx}, we can find $v \in \Q_{\geq 0}^r$ such that if $\u = v^T M$ then we have $\In_{\w}(g) = \In_{\u}(g)$, for all $g \in G_>(I)$, so that $\In_{\u}(I) = \In_{\w}(I)$.

Next, we let $I$ be a general ideal, i.e. not necessarily homogeneous. We then form the homogenization $I_h \subset \k[x_0, \x]$ (see \cite[Chapter 2]{MSt}). Let $(0, \w) \in \Q^{r \times (n+1)}$ be the matrix obtained from $\w$ by adding a $0$ column to the left. The proof of \cite[Proposition 2.6.1]{MSt} shows that for any $\w \in \Q^{r \times n}$, we have $\In_{(0, \w)}(I_h)_{x_0 = 1} = \In_{\w}(I)$, where $\In_{(0, \w)}(I_h)_{x_0 = 1} \subset \k[\x]$ is the ideal obtained by setting $x_0$ equal to $1$. Since $x_0$ is weighted $0$, an application of Lemma \ref{lem-weight-approx} to an appropriate Gr\"obner basis of $I_h$ produces a vector $(0, \u) \in \Q^{n+1}$.  Now we observe that
$$\In_{\w}(I) = \In_{(0, \w)}(I_h)_{x_0 = 1} = \In_{(0, \u)}(I_h)_{x_0 = 1} = \In_{\u}(I).$$
\end{proof}

It is easy to find a weighting $\u \in \Q^n$ as in Proposition \ref{prop-in-u-rep-in-w} if the rows of the weighting matrix $\w$ are taken from the same cone in the Gr\"obner fan $\Sigma(I_h)$ (or $\Sigma(I)$ if $I$ is itself homogeneous with respect to a positive grading).
\begin{proposition}\label{midpoint}    
Suppose that the rows of the weighting matrix $\w$ are linearly independent, are taken from the same cone $C \subset \Sigma(I_h)$ and moreover span the linear span of $C$. Then $\In_{\w}(I) = \In_{\u}(I)$ for $\u = \sum_i \u_i$. 
\end{proposition}
\begin{proof}
From Lemma \ref{lem-in_w-in_u_i} and \cite[Proposition 1.13]{St}, we can conclude that there are $\epsilon_2, \ldots, \epsilon_r > 0$ such that $\In_M(I) = \In_{u_1+\epsilon_2 u_2 + \cdots + \epsilon_r u_r}(I)$ which in turn is equal to $\In_{u_1+\cdots+u_r}(I)$  
\end{proof}

Finally we end the section with the definition of lineality space of an ideal. 
\begin{definition}[Lineality space]   \label{def-lineality-space}
The {\it lineality space} $L^r(I)$ of an ideal $I \subset \k[\x]$ is the set of all $\w \in \Q^{r\times n}$ with $\In_\w(I) = I$. \end{definition}


\subsection{Tropical Geometry}\label{tropical}
We briefly recall the notion of tropical variety of an ideal, and an extension of this notion to higher ranks. We suggest the book by Maclagan and Sturmfels \cite{MSt} for an excellent introduction to tropical geometry.  We will confine ourselves to tropicalization over a trivially valued field $\k$. 

\begin{definition}   \label{def-trop-var}
Let $I \subset \k[\x]$ be an ideal. The tropical variety $\T(I) \subset \Q^n$ is the set of $\u \in \Q^n$ such that $\In_\u(I)$ contains no monomials. 
\end{definition}

The tropical variety carries the structure of a weighted polyhedral fan \cite{MSt}. Furthermore, if $I$ is homogeneous with respect to some positive grading on $\k[\x]$, it can be considered to be a subfan of the Gr\"obner fan $\Sigma(I)$ (\cite{SpSt}).  More generally, if $I$ is not homogeneous, or is an ideal in a Laurent polynomial ring, the tropical variety can be studied through its relationship with the homogenization $I_h$. In particular, the tropical variety $\T(I)$ can be taken to be a union of faces of the Gr\"obner fan of $I_h$ intersected with the hyperplane defined by setting the $x_0$-weight equal to $0$ (\cite[Proposition 2.6.2]{MSt}). A consequence of this construction is that there is a subdivision of $\T(I)$ into open polyhedral cones $C$ such that if $\u, \u'$ are in the same cone, then $\In_\u(I) = \In_{\u'}(I)$ (\cite[proof of Theorem 2.6.5]{MSt}). 

The following is clear from the definition of $\T(I)$.
\begin{lemma}
If $\In_\u(I)$ is prime and $\{x_1, \ldots, x_n\} \cap \In_\u(I) = \emptyset$ then $\u \in \T(I)$. 
\end{lemma}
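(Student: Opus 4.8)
The plan is to unwind the definitions and observe that the statement is almost a tautology once the right facts are in place. We must show that if $\In_\u(I)$ is prime and contains none of the variables $x_1, \ldots, x_n$, then $\In_\u(I)$ contains no monomial at all, so that $\u \in \T(I)$ by Definition \ref{def-trop-var}.

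First I would recall that $\In_\u(I)$ is a $\Q$-homogeneous ideal with respect to the grading on $\k[\x]$ induced by $\u$ (this is noted in Section \ref{subsec-Grobner-theory}), so it is generated by $\u$-homogeneous forms; in particular it is a genuine ideal to which the primeness hypothesis applies. Next, suppose for contradiction that $\In_\u(I)$ does contain some monomial $\x^\alpha = x_1^{\alpha_1} \cdots x_n^{\alpha_n}$. Since $\In_\u(I)$ is prime and $\x^\alpha = \prod_i x_i^{\alpha_i}$ is a product of the elements $x_i$, primeness forces $x_i \in \In_\u(I)$ for at least one index $i$ with $\alpha_i > 0$ (note $\x^\alpha \neq 1$ because $I$, hence $\In_\u(I)$, is a proper ideal — it does not contain units). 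This contradicts the hypothesis $\{x_1, \ldots, x_n\} \cap \In_\u(I) = \emptyset$. Therefore $\In_\u(I)$ contains no monomials, which is exactly the condition for $\u \in \T(I)$.

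The only mild subtlety — and the one place worth a sentence of care — is the degenerate case where the ``monomial'' in question is the constant $1$ (i.e. $\alpha = 0$); this is ruled out because $1 \in \In_\u(I)$ would force $\In_\u(I) = \k[\x]$, but $I$ is a proper ideal (it is the kernel of a surjection onto a nonzero domain) and passing to an initial ideal does not change whether the ideal is proper. So there is no real obstacle here; the lemma is immediate from the prime condition, the standard fact that every monomial is a product of variables, and the observation that a prime ideal containing a product contains one of its factors.

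I would write the proof in just two or three sentences, roughly: \emph{Suppose $\x^\alpha \in \In_\u(I)$ for some nonzero monomial. Since $\In_\u(I)$ is prime and $\x^\alpha = x_1^{\alpha_1}\cdots x_n^{\alpha_n}$, some $x_i$ with $\alpha_i > 0$ lies in $\In_\u(I)$, contradicting $\{x_1, \ldots, x_n\} \cap \In_\u(I) = \emptyset$. Hence $\In_\u(I)$ contains no monomial, so $\u \in \T(I)$.} This is the entire argument, so the ``main obstacle'' is essentially nonexistent — the content is entirely in having set up $\T(I)$ via initial ideals and in the elementary property of prime ideals.
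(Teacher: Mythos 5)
Your argument is correct and is exactly the elementary observation the paper treats as immediate (the paper states the lemma is ``clear from the definition of $\T(I)$'' and gives no proof): a prime ideal containing a monomial must contain one of the variables. One small inaccuracy in a side remark: your claim that ``passing to an initial ideal does not change whether the ideal is proper'' is false in general --- the paper itself notes that for arbitrary $\u$ the initial ideal $\In_\u(I)$ can be all of $\k[\x]$ even when $I$ is proper --- but this does not affect your proof, since the hypothesis that $\In_\u(I)$ is prime already forces it to be a proper ideal, so $1 \notin \In_\u(I)$ without any appeal to $I$.
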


The initial ideals $\In_\u(I)$ for $\u \in \T(I)$ share some of the properties of $I$. For example, for an arbitrary $\u \in \Q^n$, the initial ideal $\In_\u(I)$ could be all of $\k[\x]$, however if $\u \in \T(I)$ then the dimension of  $\k[\x]/\In_\u(I)$ is equal to that of $\k[\x]/I$ (\cite[Theorem 8.2.1]{JensenThesis}).


\begin{definition}  \label{def-trop-var-higher-rank}
Let $I \subset \k[\x]$ be an ideal. Consider $\Q^r$ equipped with a group ordering. We say that $\w \in \Q^{r \times n}$ is in the {\it rank $r$ tropical variety} $\T^r(I)$ if the initial ideal $\In_{\w}(I)$ contains no monomials. 
\end{definition}

The points in the rank $r$ tropical variety $\T^r(I)$ are related to the points in the usual tropical variety $\T(I)$ by the following proposition. It is a corollary of Lemma \ref{lem-in_w-in_u_i}.
\begin{proposition}\label{tropicaltower}
Let $\Q^r$ be equipped with the standard lexicographic ordering. Let $\w \in \Q^{r \times n}$ and let $\u_i \in \Q^n$ be the $i$-th row of $M$, $1 \leq i \leq r$. Then $\w \in \T^r(I)$ if and only if $u_1 \in \T(I)$ and $\u_i \in \T(\In_{\u_{i-1}}(\ldots \In_{\u_1}(I)\ldots ))$ for all $1 < i \leq r$.
\end{proposition}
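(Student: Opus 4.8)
The plan is to prove Proposition~\ref{tropicaltower} by reducing it to the corresponding statement about initial ideals, namely Lemma~\ref{lem-in_w-in_u_i}, together with the characterization of tropical points as those weightings whose initial ideal contains no monomial. First I would unwind the definitions: by Definition~\ref{def-trop-var-higher-rank}, $\w \in \T^r(I)$ means precisely that $\In_\w(I)$ contains no monomial, and by Lemma~\ref{lem-in_w-in_u_i} we have the identity
\begin{equation}
\In_\w(I) = \In_{\u_r}(\ldots \In_{\u_1}(I) \ldots ),
\end{equation}
where $\u_1, \ldots, \u_r$ are the rows of $\w$ and the iterated initial ideals are taken with respect to $\Q$-weightings (the lexicographic order on $\Q^r$ being exactly what makes this iteration the correct one). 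So the whole proposition becomes a statement purely about this tower of single-row initial ideals.

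Next I would set up an induction on $r$. For $r = 1$ the claim is the definition of $\T(I)$. For the inductive step, write $J_{i} = \In_{\u_i}(\ldots \In_{\u_1}(I)\ldots)$ for $1 \le i \le r$, so $J_r = \In_\w(I)$ by the displayed identity, and also $J_{r-1} = \In_{\w'}(I)$ where $\w'$ is the $(r-1)\times n$ matrix of the first $r-1$ rows, again by Lemma~\ref{lem-in_w-in_u_i}. The key point to verify is the equivalence: \emph{$J_r$ contains no monomial if and only if each $J_i$ (for $1 \le i \le r$) contains no monomial.} The direction "$J_i$ monomial-free for all $i$ $\Rightarrow$ $J_r$ monomial-free" is trivial since in particular $J_r$ is taken to be monomial-free. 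For the converse, the crucial observation is that taking an initial ideal can only \emph{introduce} monomials, never destroy all of them: if $J_{i-1}$ contains a monomial $\x^\alpha$, then $\x^\alpha = \In_{\u_i}(\x^\alpha) \in \In_{\u_i}(J_{i-1}) = J_i$, so $J_i$ also contains a monomial. Contrapositively, if $J_r$ is monomial-free then so is every earlier $J_i$, in particular $J_1 = \In_{\u_1}(I)$, giving $\u_1 \in \T(I)$, and for $1 < i \le r$ we get $J_i = \In_{\u_i}(J_{i-1})$ monomial-free, i.e. $\u_i \in \T(J_{i-1}) = \T(\In_{\u_{i-1}}(\ldots \In_{\u_1}(I)\ldots))$, which is exactly the asserted condition.

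Putting these together completes the proof: $\w \in \T^r(I) \iff \In_\w(I)$ monomial-free $\iff J_i$ monomial-free for all $i$ $\iff \u_1 \in \T(I)$ and $\u_i \in \T(\In_{\u_{i-1}}(\ldots\In_{\u_1}(I)\ldots))$ for $1 < i \le r$. I do not expect any serious obstacle here; the one place to be slightly careful is making sure the iterated initial ideals in Lemma~\ref{lem-in_w-in_u_i} really do line up so that $\T(J_{i-1})$ makes sense as an ordinary (rank one) tropical variety at each stage — but since each $J_{i-1}$ is genuinely an ideal in $\k[\x]$ and $\u_i \in \Q^n$ is an ordinary weight vector, Definition~\ref{def-trop-var} applies verbatim, and the elementary monotonicity remark "an initial ideal of an ideal containing a monomial again contains a monomial" is all the extra input needed.
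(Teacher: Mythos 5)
Your proof is correct and follows exactly the route the paper intends: the paper gives no argument beyond declaring the proposition a corollary of Lemma \ref{lem-in_w-in_u_i}, and your write-up supplies precisely the missing details, namely the tower identity from that lemma plus the observation that a monomial in $J_{i-1}$ survives as its own initial form in $J_i$, so monomial-freeness of $J_r$ forces monomial-freeness of every $J_i$. No gaps.
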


\bibliographystyle{alpha}
\bibliography{Biblio}
\end{document}